\newtheorem{theorem}{Theorem}[section]
\newtheorem{proposition}[theorem]{Proposition}
\newtheorem{lemma}[theorem]{Lemma}
\newtheorem{corollary}[theorem]{Corollary}
\theoremstyle{plain}
\newtheorem{maintheorem}{Theorem}
\theoremstyle{definition}
\newtheorem{definition}[theorem]{Definition}
\newtheorem{remark}[theorem]{Remark}
\newtheorem{example}[theorem]{Example}
\newcommand{\cA}{{\mathcal A}}
\newcommand{\cC}{{\mathcal C}}
\newcommand{\cH}{{\mathcal H}}
\newcommand{\cW}{{\mathcal W}}
\newcommand{\om}{{\omega}}
\newcommand{\bbN}{{\mathbb N}}
\newcommand{\bbR}{{\mathbb R}}
\begin{document}

\title[Quenched and annealed equilibrium states and applications]{Quenched and annealed equilibrium states for random Ruelle expanding maps and applications}

\author{Manuel Stadlbauer, Paulo Varandas and Xuan Zhang}

\address{Manuel Stadlbauer, Instituto de Matem\'atica, Universidade Federal do Rio de Janeiro\\
 Av. Athos da Silveira Ramos 149, 21941-909 Rio de Janeiro (RJ), Brazil}
\email{manuel@im.ufrj.br}

\address{Paulo Varandas, CMUP and Departamento de Matem\'atica, Universidade Federal da Bahia\\
Av. Ademar de Barros s/n, 40170-110 Salvador, Brazil}
\email{paulo.varandas@ufba.br}

\address{Xuan Zhang, Instituto de Matem\'atica e Estat\'istica, Universidade de S\~ao Paulo\\
R. do Matão, 1010, 05508-090 S\~ao Paulo (SP), Brazil.
}
\email{xuan@ime.usp.br}

\subjclass[2010]{
Primary: 
37A25,   	
37C85,   	
37D35, 
Secondary: 
 37H05,   	
 47B80,   	
 37A50.   	
}

\keywords{Random dynamical systems, quenched and annealed equilibrium states, non-autonomous dynamical systems, decay of correlations, semigroup actions}

\begin{abstract} In this paper we describe the spectral properties of semigroups of  expanding maps acting on Polish spaces,  considering both sequences of transfer operators along infinite compositions of dynamics and integrated transfer operators. 
We prove that there exists a limiting behaviour for such transfer operators, and that these semigroup actions admit 
equilibrium states with exponential decay of correlations and several limit theorems. The reformulation of these results in terms
of quenched and annealed equilibrium states extend results in \cite{Baladi,CRV17}, where the randomness is driven by a 
random walk and the phase space is assumed to be compact. Furthermore, we prove that the quenched equilibrium measures
vary H\"older continuously and that the annealed equilibrium states can be recovered from the latter.   
Finally, we give some applications in the context of weighted non-autonomous iterated function systems, free semigroup actions and 
on the boundary of equilibria.
\end{abstract}

\thanks{
MS was partially supported by CAPES (Programa PROEX da P\'os-Gradua\c c\~ao em Matem\'atica do IM-UFRJ), CNPq (PQ 312632/2018-5, Universal 426814/2016-9). 
PV  was partially supported by CMUP (UID/MAT/00144/2013), which is funded by FCT (Portugal) with national (MEC) and European structural funds through the programs FEDER, under the partnership agreement PT2020, and by 
 Funda\c c\~ao para a Ci\^encia e Tecnologia (FCT) - Portugal, through the grant CEECIND/03721/2017 of the Stimulus of Scientific Employment, Individual Support 2017 Call. XZ was  supported by FAPESP grant \#2018/15088-4. 
}

\maketitle

\tableofcontents
 
\section{Introduction and statement of results}

We consider the joint action of a finite family $\{T_i\}$ of Ruelle expanding maps acting on a complete metric space $X$ from the viewpoint of thermodynamic formalism. This is closely related to the  quenched results in the purely topological context of fibred systems with Ruelle expanding fibres and invertible factor, as studied in  \cite{DenkerGordin:1999,DenkerGordinHeinemann:2002}, the annealed setting in \cite{Baladi} for a random dynamical systems which is modelled by a skew product over 
an ergodic automorphism $\theta: (\Omega,\mathrm P) \to (\Omega,\mathrm P)$ as well as for arbitrary sequences of expanding maps on the unit interval  (\cite{Heinrich:1996,ConzeRaugi:2007}) or general non-autonomous dynamical systems (we refer the reader to \cite{CRV19,Vaienti} and references therein). One of the central questions in this area is effective construction of SRB measures and equilibrium states as it might allow to establish, for example, limit laws or stability under perturbations. Furthermore, recent results, including \cite{CRV17,CRV18}, allow to bridge between the dynamics of fibred systems with the dynamics of semigroup actions, furnishing an important field of applications. 

\smallskip
The previous contexts are naturally related with non-stationary dynamics. On the one hand,
any invariant probability $\mu$ for a fibred system can be disintegrated by 
a measurable family $(\mu_\omega)_{\omega}$ of probabilities, each of which describing typical 
points according to the random orbit
\begin{equation}\label{eq:orbit-random}
T_\om^n:= T_{\theta^{n-1}(\om)} \circ \dots \circ T_{\theta(\om)} \circ T_\om.
\end{equation}
On the other hand, a description of the dynamics of the composition in \eqref{eq:orbit-random} 
for $\mathrm P$-typical points allows to reconstruct the whole random dynamics 
taking the probability $d\mu=d\mu_\om \,d\mathrm P(\om)$.
While many of the known results concern the dynamics of random orbits associated to typical points 
in $\Omega$, there has been recent interest in taking a more embracing approach to describe 
statistical properties in the absence of a reference measure $\mathrm P$, that is in the context of non-autonomous dynamical systems, either using topological or ergodic methods. However, to the best of our knowledge, there are few 
results on the thermodynamic formalism of these non-autonomous dynamical systems, where a major obstruction is caused by the absence of reference probabilities. In a recent result, Atnip et al \cite{Atnip} developed a quenched thermodynamic formalism for
piecewise monotone interval maps satisfying a random covering property (a property on a finite iteration of the transfer operator). 
One of the main contributions of this paper is to provide a general description of the sequential dynamics obtained by composition 
of transfer operators associated to distance expanding maps on Polish spaces, paving the way for a thermodynamic formalism of both 
sequential dynamics, quenched and annealed random dynamical systems.

\smallskip
In the case of annealed random dynamical systems, there are very few results on the existence of equilibrium states and description of their statistical properties, except for the case of the geometric
potential or Bernoulli randomness. In this case the Lebesgue measure is the conformal measure associated to all 
sequential dynamics, the notions of annealed and quenched topological pressure coincide, and the relevant measures are the SRB measures. This situation was considered by Baladi in \cite{Baladi}, where a thermodynamic formalism for annealed random expanding maps driven by 
specific measures on the shift was developed.

In contrast to that work, we overcome the problem of the non-existence of invariant densities due to purely functorial reasons by considering the joint action on the same space and integrate over the possible paths. Such an approach is inspired by some new methods from \cite{BessaStadlbauer:2016,BressaudFernandezGalves:1999,HairerMattingly:2008,KloecknerLopesStadlbauer:2015,Stadlbauer:2015},
and has potential impact in different applications of the thermodynamic formalism, including
a description of invariant measures and equilibrium states for semigroup actions as initiated
in \cite{CRV17,CRV18}.

\smallskip
In what follows we introduce the setting and state the main results of this paper. However, for the sake of simplicity, we postpone several technical definitions to the next sections.   
Throughout, we assume that $(X,d)$ is a complete and separable metric space, and that $T_1, \ldots T_k: X \to X$ are continuous, surjective and Ruelle expanding maps (cf. Definition \ref{def:Ruelle expanding}). Moreover, we always assume that the semigroup  $\mathcal{S}$ generated by these maps is jointly topologically mixing and finitely aperiodic (cf. Definitions \ref{def:jointly topologically mixing} and \ref{def:finitely aperiodic}). 

Moreover, as we are interested in thermodynamic quantities, we fix H\"older continuous functions $\varphi_1, \ldots, \varphi_k: X \to \mathbb{R}$ and define, for a finite word $v=i_1 \ldots i_n$, 
\[ T_v:=  T_{i_n}\circ \cdots \circ T_{i_1} \hbox{ and } \varphi_v := \varphi_{i_1} + \varphi_{i_2} \circ T_{i_1} + \cdots  + \varphi_{i_n} \circ T_{i_1 i_2 \ldots i_{n-1}}. \] 
This then gives rise to a family of Ruelle operators $\{L_v\}$ and a further family of  operators
  $\{\mathbb{P}_{u}^{v}\}$, defined by
\[ L_v(f)(x) := \sum_{T_v(y)=x} e^{\varphi_v(y)}f(y), \quad \mathbb{P}_{u}^{v}(f) = \frac{L_{v}(f \cdot L_{u}(\mathbf{1}))}{L_{uv}(\mathbf{1})}, \]
for $f$ in a suitable function space and with $\mathbf{1}$ referring to the constant function of value $1$. Moreover, in order to guarantee that $L_{v}(\mathbf{1})$ is well-defined, we also assume that the functions $\varphi_i$ are summable (cf. Definition \ref{def:hoelder-summable}).
 The two main features of these quotients are that $\mathbb{P}_{u}^{v}(\mathbf{1}) = \mathbf{1}$ and that the iteration rule $\mathbb{P}_{uv}^w\circ \mathbb{P}_{u}^v = \mathbb{P}_{u}^{vw}$ holds. It follows from the first that the dual operators $\{(\mathbb{P}_{u}^{v})^\ast\}$ act on the space of probability measures  $\mathcal{M}_1(X)$, and from the second that a certain contraction with respect to one of those operators implies geometric convergence of the family. Our first principal result now establishes this kind of convergence. In here, $\overline{W}$ refers to the Vaserstein metric and  $\overline{D}$ to the H\"older coefficient with respect to the equivalent metric $d^\ast$ (cf. \eqref{eq:d star}). We refer the reader to Section~\ref{sec:quotients} for the necessary definitions
 and notations.    

\begin{maintheorem}\label{thm:mainA}
Suppose  the Ruelle-expanding semigroup $\mathcal{S}$ is jointly topologically mixing and finitely aperiodic, and that every potential $\varphi_i$ is $\alpha$-H\"older and summable.
 Then there exist $k_0 \in \bbN$ and $s \in (0,1)$ such that for all finite words $u,v$ with $|v|\ge k_0$ and $\nu_1 , \nu_2\in \mathcal{M}_1(X)$ and every H\"older continuous observable $f: X\to\mathbb R$ with  $\overline{D}(f) < \infty$,
\begin{align*}
\overline{W}({\mathbb{P}_{u}^{v}}^\ast(\nu_1), {\mathbb{P}_{u}^{v}}^\ast (\nu_2)) &\leq  s^{|v|} \overline{W}( \nu_1 , \nu_2),\\
\quad \overline{D}(\mathbb{P}_{u}^{v}(f))  &\leq s^{|v|} \overline{D}(f).
\end{align*}
\end{maintheorem}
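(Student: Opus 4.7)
The two inequalities are dual to each other via Kantorovich--Rubinstein duality for the Vaserstein distance: because $\overline{D}$ measures the Lipschitz constant with respect to the equivalent metric $d^\ast$, one has $\overline{W}(\nu_1,\nu_2)=\sup\{|\nu_1(f)-\nu_2(f)|:\overline{D}(f)\le 1\}$. Since $\nu(\mathbb{P}_u^v(f))={\mathbb{P}_u^v}^\ast\nu(f)$, the Vaserstein estimate will be an immediate consequence of the H\"older estimate applied to each $f$ with $\overline{D}(f)\le 1$. The whole task therefore reduces to establishing the contraction $\overline{D}(\mathbb{P}_u^v(f))\le s^{|v|}\overline{D}(f)$ on H\"older functions.

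I would next exploit the semigroup identity $\mathbb{P}_{uv}^{w}\circ\mathbb{P}_{u}^{v}=\mathbb{P}_{u}^{vw}$ to split the H\"older bound into two ingredients: (a) a non-expansion estimate $\overline{D}(\mathbb{P}_u^v(f))\le \overline{D}(f)$ valid for arbitrary $u,v$, and (b) a uniform one-block contraction $\overline{D}(\mathbb{P}_u^v(f))\le c\,\overline{D}(f)$ with some fixed $c<1$ for every $u$ and every $v$ of some fixed length $|v|=k_0$. Writing a long word as $m=\lfloor |v|/k_0\rfloor$ consecutive blocks of length $k_0$ followed by a shorter remainder and iterating (a) and (b) yields $\overline{D}(\mathbb{P}_u^v(f))\le c^{m}\overline{D}(f)$, which after possibly increasing $k_0$ to absorb the constant $c^{-1}$ is bounded by $s^{|v|}\overline{D}(f)$ for some $s\in (c^{1/k_0},1)$.

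The heart of the proof is then (a) and (b). Because $\mathbb{P}_u^v(\mathbf{1})=\mathbf{1}$, the operator is a Markov average $\mathbb{P}_u^v(f)(x)=\sum_{T_v(y)=x} p^{u}_{v}(y\mid x)\,f(y)$ with stochastic weights $p^{u}_{v}(y\mid x)=e^{\varphi_v(y)}L_u(\mathbf{1})(y)/L_{uv}(\mathbf{1})(x)$. Given nearby $x_1,x_2$, the Ruelle-expanding property of $T_v$ produces, for each inverse branch $y_1\in T_v^{-1}(x_1)$, a matching $y_2\in T_v^{-1}(x_2)$ with $d^\ast(y_1,y_2)\le \lambda^{|v|} d^\ast(x_1,x_2)$ for a uniform $\lambda<1$, while summability of the $\varphi_i$ produces bounded-distortion estimates on $\varphi_v$ and $L_u(\mathbf{1})$. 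These ingredients yield a coupling of $p^{u}_{v}(\cdot\mid x_1)$ with $p^{u}_{v}(\cdot\mid x_2)$: its diagonal part, controlled by the expansion of $T_v$ and by the H\"older regularity of $f$, gives (a); joint topological mixing and finite aperiodicity ensure that for $|v|\ge k_0$ the inverse branches over $x_1$ and $x_2$ overlap enough for the coupling to be chosen with a strictly positive bias toward the diagonal, producing the definite gap needed for (b).

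The hard part is securing the uniformity in $u$ of the contraction constant in (b): the weights $p^{u}_{v}$ depend on $u$ through the ratio $L_u(\mathbf{1})(y)/L_{uv}(\mathbf{1})(x)$, so one needs a priori H\"older-type control of $\log L_u(\mathbf{1})$ that does not degrade as $|u|\to\infty$. These are precisely the Lasota--Yorke-type estimates for the family $\{L_u(\mathbf{1})\}$ that are drawn from summability of the $\varphi_i$ and aperiodicity of $\mathcal{S}$, and which should have been established prior to this theorem. Feeding them into the coupling argument yields the desired uniform $c<1$ and closes the proof.
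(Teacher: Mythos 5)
Your proposal is correct in outline and follows essentially the same route as the paper's proof of Theorem~\ref{theorem:contraction}: Kantorovich duality to pass between the measure and function estimates, the Markov-kernel/coupling view of $\mathbb{P}_{u}^{v}$ with branch pairing for nearby points and a Doeblin-type overlap (from joint topological mixing plus finite aperiodicity) for distant ones, uniformity in $u$ supplied by the global comparability $L_w(\mathbf{1})(x)\asymp L_w(\mathbf{1})(y)$ of Lemma~\ref{comparability}, and iteration via $\mathbb{P}_{uv}^{w}\circ\mathbb{P}_{u}^{v}=\mathbb{P}_{u}^{vw}$. The only overstatement is your step (a): strict non-expansion of $\overline{D}$ for arbitrary short $v$ is not delivered by the Doeblin--Fortet estimate (it yields only a factor of order $\tfrac12+\lambda^{|v|}$, which may exceed $1$), but a uniform bound suffices for your block scheme, or one can, as the paper does, obtain the contraction for every block of length at least $k_0$ and split $v$ into blocks of lengths in $[k_0,2k_0)$.
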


This theorem implies that, for {any} infinite word $\omega =i_1 i_2 \ldots$ and measure $\nu\in \mathcal{M}_1(X)$, the limit
\[\mu_{\omega}:=  \lim_{l \to \infty} \left( {\mathbb{P}_{\emptyset}^{i_1 \ldots i_l}}\right)^\ast(\nu)\]
exists, is independent of $\nu$ and the speed of convergence is exponential. This means that, under some mild assumptions on the set of Ruelle expanding maps, any non-autonomous sequence of dynamics admits a probability measure that rules its dynamics and that this measure is a non-autonomous conformal measure in the following sense: 
there exists $\lambda_{u,\omega}> 0$ such that $L_u^\ast(\mu_\omega) = \lambda_{u,\omega} \mu_{u\omega}$ (see Proposition~\ref{prop:conformal}).  
Furthermore, for any left infinite word $\tilde\omega = \ldots i_{-2} i_{-1}$, the limit
\[\mu_{\tilde{\omega},\omega}:=  \lim_{l \to \infty} \left( {\mathbb{P}_{i_{-l} \ldots i_{-1}}^{i_1 \ldots i_l}}\right)^\ast(\nu)\]
exists, varies H\"older continuously with $\omega$, is independent of $\nu$, and the speed of convergence is exponential. As shown in  Proposition~\ref{prop:equilibrium}, this measure is invariant in the non-autonomous setting, and if 
$\tilde{\omega}$ and $\omega$ are periodic extensions of the finite word $w$, that is $\tilde{\omega} = \ldots ww$ and ${\omega} =  ww \ldots$, then  $\mu_{\tilde{\omega},\omega}$ is the unique equilibrium state of $(T_w,\varphi_w)$ (cf. Proposition~\ref{prop:eq state}). In fact, the set of all measures  $\{\mu_{\tilde{\omega},\omega}\}$, where $\tilde{\omega}$, $\omega$ run through all infinite words is the closure of these equilibrium states and can be used to define a compactification of the semigroup (Proposition~\ref{prop:boundary}).  

A further application of Theorem \ref{thm:mainA} is related to an invariance principle as the contraction allows to apply the general invariance principle in \cite{CunyMerlevede:2015} and gives rise to the following result (for a similar result for continued fractions with restricted entries, see \cite{StadlbauerZhang:2017a}). $[\omega]_n$ stands for the initial $n$-word of an infinite word $\omega$.

\begin{maintheorem}\label{thm:mainD}
Suppose the finitely Ruelle-expanding semigroup $\mathcal{S}$ is jointly topologically mixing and finitely aperiodic, and that every potential $\varphi_i$ is $\alpha$-H\"older and summable. 
Suppose $\omega\in\Sigma$, $f\in\cH_\alpha$. Let $f_n=f -\int f\circ T_{[\omega]_n} d\mu_{\omega}$ for every $n\in\mathbb N_0$ and let $s_n^2 = \mathbb E_{\mu_\omega}(\sum_{k=0}^{n-1} f_k\circ T_{[\omega]_k})^2$ for $n\ge 1$ and assume that 
$ \sum_n s_n^{-4}<\infty$. Then there exists a sequence $(Z_n)$ of independent centred Gaussian random variables such that 
\begin{gather*}
\sup_n\left|\sqrt{\textstyle \sum_{k=0}^{n-1} \mathbb E_{\mu_\omega} Z_k^2}-s_n\right|<\infty,\\
\sup_{0\leq k \leq n-1} \left| \textstyle\sum_{i=0}^k f_i\circ T_{[\omega]_i} - \sum_{i=0}^k Z_i \right| = o(\sqrt{s^2_n \log \log s^2_n}) \hbox{ a.s.}
\end{gather*}
\end{maintheorem}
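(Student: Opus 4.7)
The plan is to derive Theorem~\ref{thm:mainD} from the reverse-martingale almost sure invariance principle of \cite{CunyMerlevede:2015}, using Theorem~\ref{thm:mainA} to construct a martingale approximation of $S_n := \sum_{k=0}^{n-1} f_k\circ T_{[\omega]_k}$. I would work on $(X,\mu_\omega)$ equipped with the decreasing filtration $\mathcal{F}_n := T_{[\omega]_n}^{-1}(\mathcal{B}(X))$, under which each $h_k := f_k\circ T_{[\omega]_k}$ is $\mathcal{F}_k$-measurable and centred. A duality argument based on the conformal relation $L_v^\ast(\mu_\omega) = \lambda_{v,\omega}\mu_{v\omega}$ from Proposition~\ref{prop:conformal} together with the normalisation $\mathbb{P}^v_\emptyset(\mathbf{1})=\mathbf{1}$ represents conditional expectations on $(X,\mu_\omega)$ in terms of iterates of the normalised operators $\mathbb{P}^v_u$; the contraction of Theorem~\ref{thm:mainA} then yields, uniformly in $\omega$, the exponential estimate
\[
\bigl\|\mathbb{E}_{\mu_\omega}[h_k \mid \mathcal{F}_n]\bigr\|_\infty \;\leq\; C\,s^{n-k}\|f\|_{\cH_\alpha}, \qquad n\geq k.
\]

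This exponential decay enables a Gordin-type decomposition adapted to the decreasing filtration: one constructs a reverse-martingale difference sequence $(d_n)$ with respect to $(\mathcal{F}_n)$ and a uniformly $L^\infty$-bounded sequence $(b_n)$ satisfying $h_n = d_n + b_{n+1} - b_n$, so that $S_n = M_n + b_n - b_0$ for $M_n := \sum_{k<n} d_k$. The $b_n$ are built from the tail sums $\sum_{k\geq n}\mathbb{E}_{\mu_\omega}[h_k\mid\mathcal{F}_n]$, which converge in $L^\infty$ by the preceding estimate. In particular $|s_n^2 - \mathbb{E}_{\mu_\omega} M_n^2| = O(1)$, and since the assumption $\sum_n s_n^{-4}<\infty$ forces $s_n\to\infty$, the coboundary is negligible at the target scale $\sqrt{s_n^2\log\log s_n^2}$.

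I would then apply the reverse-martingale ASIP of \cite{CunyMerlevede:2015} to $(d_n)$. Its hypotheses --- uniform-in-$n$ boundedness of $\|d_n\|_\infty$ (inherited from boundedness of $f$ and Theorem~\ref{thm:mainA}), an asymptotic description of the normaliser coming from the variance identity above, and precisely the summability $\sum_n s_n^{-4}<\infty$ of the statement --- produce an independent centred Gaussian sequence $(Z_n)$ with matching variances and the strong approximation $M_n - \sum_{k<n}Z_k = o(\sqrt{s_n^2\log\log s_n^2})$ almost surely. The same rate transfers to $S_n$ via the bounded coboundary, completing the proof.

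The main obstacle is the genuinely non-stationary nature of the process: neither the observables $(h_n)$ nor the conditional variances form a stationary sequence, so the convergence of the Gordin series and the verification of the Cuny--Merlev\`ede hypotheses hinge on the rate $s$ and constants from Theorem~\ref{thm:mainA} being uniform in $\omega$ and in the shift position. This uniformity, which is built into the statement of Theorem~\ref{thm:mainA}, is precisely what allows all ASIP inputs to be controlled by system-level constants and the strong approximation to hold along non-autonomous orbits.
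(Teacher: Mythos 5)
Your route is the paper's route: work on $(X,\mu_\omega)$ with the decreasing filtration $\mathcal F_n=T_{[\omega]_n}^{-1}\mathcal B$, represent conditional expectations through the normalised operators via $\mathbb E_{\mu_\omega}\bigl[f_k\circ T_{[\omega]_k}\mid\mathcal F_n\bigr]=\mathbb P_{[\omega]_k}^{[\theta^k\omega]_{n-k}}(f_k)\circ T_{[\omega]_n}$ (Lemma \ref{lem:condexptn}), use the contraction of Theorem \ref{thm:mainA} to get the exponential estimate, form a reverse-martingale plus bounded coboundary decomposition, and invoke Cuny--Merlev\`ede. One slip in the construction: with a decreasing filtration the observable $f_k\circ T_{[\omega]_k}$ is $\mathcal F_k$-measurable, so for $k\ge n$ one has $\mathbb E_{\mu_\omega}[f_k\circ T_{[\omega]_k}\mid\mathcal F_n]=f_k\circ T_{[\omega]_k}$ and your ``tail sums'' $\sum_{k\ge n}\mathbb E_{\mu_\omega}[f_k\circ T_{[\omega]_k}\mid\mathcal F_n]$ do not converge at all; the correct correction term is the sum over the \emph{past}, $b_n=\sum_{k<n}\mathbb E_{\mu_\omega}[f_k\circ T_{[\omega]_k}\mid\mathcal F_n]=\bigl(\sum_{k<n}\mathbb P_{[\omega]_k}^{[\theta^k\omega]_{n-k}}f_k\bigr)\circ T_{[\omega]_n}$, which is exactly what your estimate (valid for $n\ge k$) controls and is what the paper builds recursively. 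This is repairable, but as written the Gordin series points in the wrong direction.

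More seriously, you do not verify the central hypothesis of the Cuny--Merlev\`ede theorem, namely the almost sure condition $\sum_{k\le n}\bigl(\mathbb E(d_k^2\mid\mathcal F_{k+1})-\mathbb E d_k^2\bigr)=o(\sigma_n^2)$ a.s. Uniform boundedness of $\|d_n\|_\infty$ and the summability $\sum_n s_n^{-4}<\infty$ give the moment conditions, but not this a.s.\ statement, and it is the hardest part of the paper's proof: one writes $\mathbb E(d_k^2\mid\mathcal F_{k+1})$ as $\mathbb P_{[\omega]_k}^{[\theta^k\omega]_1}$ applied to the centred square of the martingale difference expressed on the base space, and this square contains the correction function composed with a generator $T_i$; to apply the contraction of Theorem \ref{thm:mainA} to it one must first bound its H\"older coefficient uniformly, and controlling $D_\alpha(\,\cdot\circ T_i)$ is precisely where the ``finitely expanding'' hypothesis in the statement is used --- your proposal never invokes it. One then obtains an $L^2$ bound of order $\sigma_n^{-2}$ for the normalised conditional-variance sums and upgrades it to almost sure convergence by Borel--Cantelli along a subsequence $k_n$ with $\sigma_{k_n}^2\asymp n^2$, followed by a sandwiching argument. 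Without this step the application of the reverse-martingale ASIP is not justified, so the proposal as it stands has a genuine gap even though its overall architecture matches the paper's.
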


\smallskip

We then relate and apply these results to random dynamical systems, that is we assume that the $T_i$ are chosen with respect to a given probability measure $\rho$. So, it is sufficient to fix a measure $\rho$ either on the shift spaces $\Sigma := \{1,\ldots,k\}^\mathbb{N}$ or $\Sigma_\mathbb{Z} := \{1,\ldots,k\}^\mathbb{Z}$ and consider the almost sure behaviour, referred to as \emph{quenched}, and the behaviour in average, referred to as \emph{annealed} behaviour. In this setting, Proposition~\ref{prop:conformal} provides 
existence and exponential decay towards the quenched random conformal measure $\mu_\omega$, whereas the bilateral result in Proposition~\ref{prop:equilibrium} implies the same statement for the quenched equilibrium state $\mu_{\tilde{\omega},\omega}$. 

In order to relate these quenched results to their annealed counterparts, we consider in here as in \cite{Baladi} also the annealed operators 
 \[  \mathcal{A}_n := \sum_{|w|=n} \rho(\{\omega: [\omega]_n=w\}) L_w .\]
A fundamental problem of these operators is that, in general, 
$\mathcal{A}_{n+m} \neq \mathcal{A}_n \circ \mathcal{A}_m$, which makes it impossible to apply methods from spectral theory. However, if we assume that $\rho$ is supported on a topologically mixing, one-sided subshift of finite type, it is possible to control the asymptotic behaviour of $\{\mathcal{A}_n\}$, which is our second main result. In here, $\theta$ refers to the one-sided shift map. 

\begin{maintheorem} \label{thm:mainB}
Suppose the Ruelle-expanding semigroup $\mathcal{S}$ is jointly topologically mixing and finitely aperiodic, and that every potential $\varphi_i$ is $\alpha$-H\"older and summable. Moreover, suppose that $\rho$ is supported on a topologically mixing, one-sided subshift of finite type and that
$d\rho/d\rho\circ \theta$ is H\"older continuous. Then there exist  $r\in(0,1)$, a positive function $h\in\cH_\alpha$ and $\beta>0$ such that for all $f \in \mathcal{H}_\alpha$ and every large $n \ge 1$,
\[  \left| \frac{\mathcal{A}_n(f)(x)}{\beta^n h(x)}  - \int f d\pi \right| \ll r^n (\overline D(f) +\|f\|_m).\]
 \end{maintheorem}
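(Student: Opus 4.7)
The plan is to pass from the annealed family $\{\mathcal A_n\}$, which does not form a semigroup, to iterates of a single Ruelle operator on an enlarged space, where Theorem \ref{thm:mainA} and its companions apply. Since $\rho$ is a $\theta$-invariant probability on a mixing one-sided SFT $\Sigma_A \subset \{1,\ldots,k\}^{\bbN}$ whose Jacobian $d\rho/d(\rho\circ\theta)$ is H\"older, classical thermodynamic formalism on $\Sigma_A$ produces a H\"older normalised potential $\tilde\psi$ such that the associated transfer operator $L_{\tilde\psi}$ is stochastic, $L_{\tilde\psi}(\mathbf 1)=\mathbf 1$, and $L_{\tilde\psi}^\ast \rho = \rho$.

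I would then set up the skew product $\hat T : X \times \Sigma_A \to X \times \Sigma_A$, $(x,\omega)\mapsto (T_{\omega_1}(x), \theta\omega)$, with H\"older potential $\hat\varphi(x,\omega) = \varphi_{\omega_1}(x) + \tilde\psi(\omega)$, and associated Ruelle operator
\[
\hat L(F)(x,\omega) \;=\; \sum_{\theta(\omega')=\omega}\;\sum_{T_{\omega'_1}(y)=x} e^{\varphi_{\omega'_1}(y)+\tilde\psi(\omega')}\, F(y,\omega').
\]
A short induction yields
\[
\hat L^n(f\otimes \mathbf 1)(x,\omega) \;=\; \sum_{|w|=n,\; w\omega\in \Sigma_A} e^{\tilde\psi_n(w\omega)}\, L_w(f)(x),
\]
and integrating in $\omega$, using that $L_{\tilde\psi}^n(\mathbf 1_{[w]})(\omega) = e^{\tilde\psi_n(w\omega)} \mathbf 1_{\{w\omega\in\Sigma_A\}}$ together with $L_{\tilde\psi}^\ast \rho = \rho$, gives the key bridge identity
\[
\mathcal A_n(f)(x) \;=\; \int_{\Sigma_A} \hat L^n(f\otimes\mathbf 1)(x,\omega)\, d\rho(\omega),
\]
turning the annealed asymptotics into a statement about iterates of the single operator $\hat L$.

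The next step is a spectral-gap result for $\hat L$. Equipping $X\times\Sigma_A$ with a suitable product metric turns $\hat T$ into a Ruelle expanding map with H\"older summable potential $\hat\varphi$; moreover the joint topological mixing and finite aperiodicity of the original semigroup, combined with topological mixing of $\theta$ on $\Sigma_A$, make the singleton semigroup $\{\hat T\}$ jointly topologically mixing and finitely aperiodic on the enlarged space. Theorem \ref{thm:mainA} together with the conformal/equilibrium constructions in Propositions \ref{prop:conformal} and \ref{prop:equilibrium} applied to $(\hat T,\hat\varphi)$ then produce $\beta>0$, a positive H\"older eigenfunction $\hat h$ and a conformal probability $\hat\nu$ on $X\times\Sigma_A$ with $\hat L\hat h = \beta\hat h$, $\hat L^\ast \hat\nu = \beta\hat\nu$, and a rate $r\in(0,1)$ controlling
\[
\left|\frac{\hat L^n(F)(x,\omega)}{\beta^n} \;-\; \hat h(x,\omega)\int F\, d\hat\nu \right| \;\ll\; r^n\,(\overline D(F)+\|F\|_m).
\]

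Specialising $F = f\otimes\mathbf 1$, integrating in $\omega$ against $\rho$, and setting $h(x) := \int \hat h(x,\omega)\, d\rho(\omega)$ and $\pi := (\mathrm{proj}_X)_\ast \hat\nu$ then deliver the conclusion: the H\"older regularity of $\hat h$ in $(x,\omega)$ transfers to $h$, the quantities $\overline D(F)$ and $\|F\|_m$ are controlled by $\overline D(f)$ and $\|f\|_m$, and $\int(f\otimes\mathbf 1)\, d\hat\nu = \int f\, d\pi$. I expect the main obstacle to lie entirely in the setup rather than in the concluding estimate: choosing the normalisation of $\tilde\psi$ so the integration identity holds without spurious constants, and checking carefully that the skew product on the non-compact Polish product $X\times\Sigma_A$ genuinely inherits the H\"older, summability and mixing hypotheses required to invoke Theorem \ref{thm:mainA}; once this bookkeeping is in place, the decay is automatic.
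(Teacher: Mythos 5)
Your route is viable and genuinely different from the paper's. The paper never forms a skew-product transfer operator: it works directly with $\cA_n$, splitting $n=k+l$, replacing $L_{[\omega]_n}(f)$ by $\mu_\omega(f)L_{[\omega]_n}(\mathbf 1)$ via Proposition \ref{prop:conformal}, and then handling the base direction with the auxiliary operator $\iota(g)(\omega)=\sum_i \lambda_{i,\omega}p_i(\omega)g(i\omega)$ on the compact shift, whose classical Ruelle--Perron--Frobenius data $(\beta,m,g_o)$ produce $h$ as the limit of $\beta^{-n}\int L_{[\omega]_n}(\mathbf 1)g_o\,d\rho$ and $\pi$ as $d\mu_\omega\,dm(\omega)$. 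You instead lift everything to $\hat T$ on $X\times\Sigma_A$ and apply the paper's quenched machinery to the singleton semigroup $\{\hat T\}$, then integrate out the fibre. Your bridge identity $\cA_n(f)(x)=\int \hat L^n(f\otimes\mathbf 1)(x,\omega)\,d\rho(\omega)$ is correct (it only uses $e^{\tilde\psi_n(w\omega)}=p_w(\omega)$ and $\int p_w\,\mathbf 1_{\{w\omega \text{ admissible}\}}d\rho=\rho([w])$), and the eigendata you invoke are exactly what Propositions \ref{prop:conformal} and \ref{prop:equilibrium} give for a single Ruelle expanding map: $\beta^{-n}\hat L^n(\mathbf 1)\to\hat h$ exponentially with $\hat h\asymp 1$ (via Lemma \ref{comparability}), and $\hat L^n(F)/\hat L^n(\mathbf 1)\to\int F\,d\hat\nu$ at rate $s^n\overline D(F)$; your $\pi=(\mathrm{proj}_X)_\ast\hat\nu$ coincides with the paper's $d\mu_\omega\,dm(\omega)$ because $\hat L^\ast(\mu_\omega\otimes m)=\beta\,\mu_\omega\otimes m$ precisely when $\iota^\ast m=\beta m$, and uniqueness identifies the eigendata. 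What your approach buys is conceptual economy (one operator, one spectral gap); what the paper's buys is that it never has to verify the standing hypotheses on an enlarged space and it isolates where the base enters (only through $\iota$).

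Two concrete points need repair or explicit verification. First, you assume $\rho$ is $\theta$-invariant in order to normalise $\tilde\psi$; Theorem \ref{thm:mainB} only assumes non-singularity with H\"older Jacobian (invariance is reserved for Theorem \ref{thm:mainC}). As written you prove a weaker statement. The fix is to drop the normalisation entirely and take $\tilde\psi=\log\bigl(d\rho/d(\rho\circ\theta)\bigr)$: your argument never uses $L_{\tilde\psi}(\mathbf 1)=\mathbf 1$, only the conformality $L_{\tilde\psi}^\ast\rho=\rho$, which holds for any non-singular $\rho$. Second, the ``bookkeeping'' you defer contains one step that genuinely fails as stated: on the non-compact Polish product, $\hat\varphi(x,\omega)=\varphi_{\omega_1}(x)+\tilde\psi(\omega)$ need not lie in $\cH^\ast_\alpha(X\times\Sigma_A)$, because for $\omega_1\neq\omega'_1$ the base distance is bounded while $\varphi_i-\varphi_j$ may be unbounded (the $\varphi_i$ are only assumed in $\cH^\ast_\alpha$, not bounded). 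So you cannot literally quote Theorem \ref{thm:mainA} for $(\hat T,\hat\varphi)$; you must either observe that all proofs in Sections 3--5 only use the local distortion bound \eqref{eq:bounded_distortion} at scales $d<a$ (where $\omega_1=\omega'_1$, so $\hat\varphi$ is locally H\"older with uniform constant), or work with a correspondingly localised H\"older class. The remaining checks (joint topological mixing and finite aperiodicity of $\{\hat T\}$ from those of $\mathcal S$ together with compactness and mixing of the base SFT, and summability of $\hat\varphi$) do go through but should be written out, since finite aperiodicity for the product dynamical balls forces the base coordinates of the finite set $K$ to be chosen as cylinder representatives compatible with admissibility.
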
 

Now assume that $\rho$ is a Bernoulli measure, that is the maps $T_i$ are chosen independently. Then, by independence, it follows that $\mathcal{A}_n = (\mathcal{A}_1)^n$. Hence, as an immediate corollary, one obtains that 
\[ (\mathcal{A}_1)^n (hf)(x) /\beta^n h(x) \longrightarrow \int f(x) h(x) d\pi(x)\]
exponentially fast, which is a well-known version of Ruelle's operator theorem for independently chosen maps $T_i$ (cf. Proposition~3.1 in \cite{Baladi}). As this is the key step for existence and uniqueness of the annealed equilibrium state (cf. Proposition~3.3 in \cite{Baladi}), one obtains  Theorem 1 in \cite{Baladi} for i.i.d. Ruelle expanding maps  as a corollary.

\smallskip
We now return to the general case of a one-sided subshift of finite type with exponential decay of correlations and now assume, in addition, that $\rho$ is $\theta$-invariant. In this setting, we obtain an annealed version of  decay of correlations. 

\begin{maintheorem}\label{thm:mainC}
Suppose that the assumptions of Theorem~\ref{thm:mainB} hold and that $\rho$ is $\theta$-invariant.  Then there exist a probability measure $\tilde{\pi}$, $r \in (0,1)$ and $k_1 \in \mathbb{N}$ such that
\begin{align*} 
&\left| \int \sum_{|v|=n} \mathbf{1}_{[v]}(\omega) f (T_v(x)) g(x) d\mu_\omega(x) d\rho(\omega) - 
\int f d\tilde{\pi} \int g d\mu_\omega d\rho \right| \\
& \leq   r^n \int |f| d\mu_\omega d\rho \left(  \overline D(g) + \int |g| d\mu_\omega d\rho \right)
\end{align*}
for all $g \in \cH_\alpha$ and $f: X \to \mathbb{R}$ integrable with respect to $d\mu_\omega(x) d\rho(\omega)$.
\end{maintheorem}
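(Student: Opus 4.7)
The plan is to combine Proposition~\ref{prop:conformal} (which turns the composition $T_{[\omega]_n}$ into an application of the Ruelle operator $L_{[\omega]_n}$), Theorem~\ref{thm:mainA} (which decouples the $x$-dependence exponentially fast), and Theorem~\ref{thm:mainB} (which handles the $\omega$-average against $\rho$). The argument naturally splits into a fibrewise contraction step and an annealed step.

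For the fibrewise step, fixing $\omega$, $v:=[\omega]_n$ and $\eta:=\theta^n\omega$, the conformal relation $L_v^\ast\mu_\eta = \lambda_{v,\eta}\mu_\omega$ gives
\begin{equation*}
\int f\circ T_v\cdot g\,d\mu_\omega = \lambda_{v,\eta}^{-1}\int f\cdot L_v(g)\,d\mu_\eta.
\end{equation*}
Writing $L_v(g) = L_v(\mathbf{1})\,\mathbb{P}^v_\emptyset(g)$, Theorem~\ref{thm:mainA} gives $\overline{D}(\mathbb{P}^v_\emptyset(g))\le s^n\overline{D}(g)$; together with $\mu_\omega=\lim_\ell(\mathbb{P}^{[\omega]_\ell}_\emptyset)^\ast\nu$ and a telescoping argument this upgrades to the uniform estimate $\|\mathbb{P}^v_\emptyset(g)-\int g\,d\mu_\omega\|_\infty \ll s^n(\overline{D}(g)+\|g\|_m)$. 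Substituting, and using $\lambda_{v,\eta}^{-1}\int f\,L_v(\mathbf{1})\,d\mu_\eta = \int f\circ T_v\,d\mu_\omega$, the left-hand side of Theorem~\ref{thm:mainC} splits as $I_n = M_n + R_n$, where
\begin{equation*}
M_n := \int G(\omega)H_n(\omega)\,d\rho(\omega),\quad G(\omega):=\int g\,d\mu_\omega,\quad H_n(\omega):=\int f\circ T_{[\omega]_n}\,d\mu_\omega,
\end{equation*}
and $|R_n|\ll s^n(\overline{D}(g)+\|g\|_m)\int|f|\,d\mu_\omega\,d\rho$, which already matches the size of the error in Theorem~\ref{thm:mainC}.

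For the annealed step, I would expand $H_n$ once more via conformality and use the H\"older density $d\rho|_{[v]}(v\cdot) = J_v(\cdot)\,d\rho$ provided by the hypothesis on $d\rho/d\rho\circ\theta$ to rewrite
\begin{equation*}
M_n = \sum_{|v|=n}\int G(v\eta)\,J_v(\eta)\,\lambda_{v,\eta}^{-1}\int f(x)\,L_v(\mathbf{1})(x)\,d\mu_\eta(x)\,d\rho(\eta).
\end{equation*}
By the H\"older continuity of $\omega\mapsto\mu_\omega$ (a consequence of Theorem~\ref{thm:mainA}), $G(v\eta)$ can be replaced by a function $\tilde{G}(v)$ of $v$ alone at cost $O(s^n)$. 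It then remains to analyse $\sum_v\tilde{G}(v)J_v(\eta)\lambda_{v,\eta}^{-1}L_v(\mathbf{1})(x)$. Exploiting multiplicativity of the cocycle $\lambda_{\cdot,\eta}$ together with the Gibbs structure of $\rho$, the weight $J_v(\eta)\lambda_{v,\eta}^{-1}$ can be compared to $\rho([v])\beta^{-n}$ up to a uniformly H\"older correction, so that the sum becomes (modulo exponentially small errors) a multiple of $\beta^{-n}\mathcal{A}_n(\tilde{G})(x)$ evaluated against $f\,h\,d\pi$. Applying Theorem~\ref{thm:mainB} to $\mathcal{A}_n(\tilde{G})/(\beta^n h) = \int\tilde{G}\,d\pi + O(r^n)$, and identifying $\tilde{\pi}:=h\pi$ together with $\int\tilde{G}\,d\pi = \int g\,d\mu_\omega\,d\rho$, yields the desired main term with an $O(r^n)$ error of the required form.

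The hardest part will be this annealed step: matching the quenched cocycle data $\lambda_{v,\eta},\,\mu_\eta,\,J_v$ with the annealed normalisation $\beta^n,\,h,\,\pi$ from Theorem~\ref{thm:mainB} is delicate, and will require careful use of both the multiplicativity of $\lambda_{\cdot,\eta}$ and the H\"older Gibbs structure of $\rho$ to produce the cancellations that identify $\tilde{\pi}$ and the exponential rate $r$.
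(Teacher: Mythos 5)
Your first (quenched) step is essentially the paper's own opening move: conformality plus Proposition~\ref{prop:conformal} and Lemma~\ref{comparability} reduce the left-hand side to the main term $\int f\,\mu_\omega(g)\,h_{n,\omega}\,d\mu_{\theta^n\omega}\,d\rho$ up to an error $\ll s^n\,\overline D(g)\int|f|\,d\mu_\omega\,d\rho$ (indeed only $\overline D(g)$ is needed there, not $\|g\|_m$). The annealed step, however, is where your argument breaks down, and it is genuinely different from what the paper does.

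Two concrete problems. First, the comparison of the per-word weight $J_v(\eta)\lambda_{v,\eta}^{-1}$ with $\rho([v])\beta^{-n}$ ``up to a uniformly H\"older correction'' amounts to claiming $\lambda_{v,\eta}\asymp\beta^n$ uniformly over $v\in\cW^n$, which is false in general: $\lambda_{\cdot,\eta}$ is a multiplicative cocycle whose size varies exponentially with the symbol content of $v$, and only the averaged quantity $\iota^n(\mathbf 1)(\eta)=\sum_{|v|=n}\lambda_{v,\eta}\,p_v(\eta)$ is comparable to $\beta^n$. In fact, since $\lambda_{v,\eta}^{-1}\int f\,L_v(\mathbf 1)\,d\mu_\eta=\int f\,d\mu_{v,\eta}$ is of order one, your $M_n$ is a plain $\rho$-average over words, whereas $\beta^{-n}\mathcal A_n$ averages words with the exponentially tilted weights $\rho([v])L_v(\mathbf 1)$; no bounded correction converts one average into the other (also, $\mathcal A_n(\tilde G)$ is not defined, since $\mathcal A_n$ acts on functions of $x\in X$ while $\tilde G$ lives on words). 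Second, even if an $\iota$-type spectral argument were pushed through, the factor it produces on the $g$-side is $\int\mu_\omega(g)\,dm(\omega)$, i.e.\ integration against the eigenmeasure $m$ of $\iota^\ast$, not the factor $\int g\,d\mu_\omega\,d\rho$ required by the statement; these agree only in special cases (e.g.\ $\lambda_{i,\omega}$ constant, when $m=\rho$). For the same reason the identification $\tilde\pi=h\pi$ is unsupported: the paper's $\tilde\pi$ is $\int\mu_{\tilde\omega,\omega}\,d\hat\rho(\tilde\omega,\omega)$, and the paper explicitly remarks it is unclear whether it coincides with $\pi$. What is missing from your plan are precisely the ingredients the paper uses instead of Theorem~\ref{thm:mainB}: the $\theta$-invariance and exponential mixing of $\rho$ itself on the subshift, the H\"older dependence of the two-sided densities $h_{\sigma,\omega}$ from \ref{propitem:continnuity of h} of Proposition~\ref{prop:equilibrium} (allowing $h_{n,\omega}=h_{[\omega]_n,\theta^n\omega}$ to be replaced by a function of $\theta^m\omega$ alone for $n=2m$), and a passage to the natural extension $(\hat\Sigma,\hat\theta,\hat\rho)$ to replace $h_{m,\omega}$ by $h_{\tilde\omega,\omega}$; it is this combination that simultaneously factors out $\int g\,d\mu_\omega\,d\rho$ and identifies the limiting measure $\tilde\pi$.
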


The latter reveals an unexpected connection between quenched and annealed dynamics. Indeed, it is noticeable that despite the fact that quenched and annealed random dynamical systems often measure different complexities of the dynamics (see e.g. \cite[Proposition~8.3]{CRV17} for an explicit formula in the context of free semigroup actions), in Theorem~\ref{thm:mainC} we obtain an annealed decay of correlations with respect to a
probability $d\mu_\omega d\rho$ obtained via quenched asymptotics.
These results for both quenched and annealed dynamical systems will appear as Theorems~\ref{theorem:contraction}, \ref{theo:annealed-conformal}, \ref{theo:annealed decay of correlations} and \ref{thm:asip} below. Moreover, the authors would like to point out, that according to their knowledge, Theorems~\ref{thm:mainB} and \ref{thm:mainC} are the first annealed results for a dependent choice of the maps $\{T_i\}$. Finally, in Section~\ref{sec:app}, we discuss  applications to non-autonomous conformal iterated function systems, the thermodynamic formalism of semigroup actions and a boundary construction through equilibrium states.

\section{Semigroups of Ruelle expanding maps on non-compact spaces} 
We always assume that $(X,d)$ is a complete and separable metric space and 
that $\cW$ is a finite alphabet. For every $i\in \cW$ let $T_i:X \to X$ be a continuous, surjective transformation and let $\mathcal{S}$ be the semigroup generated by $\{T_i\}_{i\in \cW}$, i.e.
$$\mathcal{S} =\{ T_{i_k}\circ T_{i_{k-1}} \circ \cdots \circ T_{i_1} :  k \in \bbN, \, {i_1},{i_2},\ldots, {i_k} \in \cW \}.$$
For every $k\in\mathbb N$ 
and every finite word $v = {i_1}{i_2}\ldots {i_k}\in \cW^k$, set
\[ T_v:=  T_{i_k}\circ \cdots \circ T_{i_1}. \]
Then each element of $\mathcal{S}$ is equal to  $T_v$ for some finite word $v$, but $v$ might not be uniquely determined (e.g. if two generators $T_a, T_b$ commute then $T_{ab}=T_{ba}$). 
Observe that, with the usual concatenation of words, we have that $T_{vw} = T_w \circ T_v$ and, in particular, that the map from $\bigcup_{k\geq 1} \cW^k \to \mathcal S$ given by $v \mapsto T_v$ is a semigroup anti-homomorphism, referred to as the coding of $\mathcal{S}$. With this coding, it naturally defines a free semigroup action $\mathcal S \times X \to X$, $(T_v,x) \mapsto T_v(x)$ determined by $\mathcal S$.

For every finite word $v\in\cW^k$, denote its length by $|v|=k$.  For $x\in X$ and $A\subset X$, let $B_r(x)=\{y\in X: d(x,y)<r\}$ and $\ B_r(A)=\{y\in X: d(x, y)<r \text{ for some } x\in A\}.$ For a finite word $v=i_1\ldots i_k$, define dynamical distance
$$d_v(x,y) :=\sup\{d(x,y),\, d(T_{i_1\ldots i_j}(x), T_{i_1\ldots i_j}(y)), 1\le j<|v| \}$$
and dynamical ball 
$$B_r^v(x):=\{y\in X: d_v(x,y)<r\}.$$ 

Later on we will also consider infinite words. 
The transformations $T_i, i\in\cW$ in this paper are always \emph{Ruelle-expanding} maps as introduced in (\cite{Ruelle2004}). However, in here, we do not require that the base space is compact and, in particular, the set of preimages of a point might be countably infinite. Recall that this notion of expanding map  is defined as follows.
\begin{definition}
$T$ is said to be \emph{$(a,\lambda)$-Ruelle-expanding}, for some $a>0$ and  $\lambda \in (0, 1)$, if for any $x, {y}, \tilde{x} \in X$  with $d(x, {y})<a$ and $T(\tilde{x})=x$, there exists a unique $\tilde{y}\in X$  with $T(\tilde{y})={y}$ and $d(\tilde{x}, \tilde{y})<a$, and such that this $\tilde y$ satisfies 
\[ d(\tilde{x}, \tilde{y}) \leq  \lambda d(x,y) . \]
\end{definition}
Examples of Ruelle-expanding maps include $C^1$-expanding maps on compact Riemannian manifolds, distance expanding maps on compact metric spaces and one-sided subshifts of countable type. In particular our context includes distance expanding maps on non-compact metric spaces. Observe that, as we only consider a finite alphabet $\cW$, we may choose the same parameters $a$ and $\lambda$ for all $T_i, i\in\cW$. 
\begin{definition} \label{def:Ruelle expanding}
The semigroup $\mathcal S$ generated by $\{T_i\}_{i\in\cW}$ is said to be a \emph{$(a,\lambda)$-Ruelle-expanding semigroup} if every $T_i, i\in\cW$, is $(a,\lambda)$-Ruelle-expanding.
\end{definition}

{We extend to the semigroup $\mathcal S$ the notions of topological mixing and finite aperiodicity, which are usually defined for the iteration of a single map. They are known from graph directed Markov systems (\cite{MauldinUrbanski:2003}) or from the  b.i.p.-property for shift spaces (\cite{Sarig:2003a}).

\begin{definition}\label{def:jointly topologically mixing}
$\mathcal S$ is said to be \emph{jointly topologically mixing} if, for all open sets $U,V \subset X$, there exists $m \in \mathbb{N}$ such that $g_{w}^{-1}(U) \cap V \neq \emptyset$ for all finite words $w$ with $|w|\geq m$. 
\end{definition}

\begin{definition}\label{def:finitely aperiodic}
A $(a,\lambda)$-Ruelle-expanding semigroup $\mathcal S$ is said to be ($n$-)\emph{finitely aperiodic} (see Figure \ref{fig:bip}) if there exist $n\in\mathbb N$,  a finite subset $K \subset X$ and  $r>0$ such that for all $x \in X$ and $w \in \cW^n$ one can find $\xi, \eta\in K$ satisfying
\begin{enumerate}[label=(\roman*)] 
\item there is $\xi^\ast \in T_w^{-1}(\xi)$ with $d_w(x, \xi^\ast)<a$,
\item there is $x^\ast\in T_w^{-1}(x)$ with $d(x^\ast,\eta)< a$ and $d_w(x^\ast, \eta)<r$.
\end{enumerate}
\end{definition}
The first condition is modelled after the big image condition, the second after the big preimage condition.

\begin{figure}[htbp] 
   \centering
   \includegraphics[width=\textwidth]{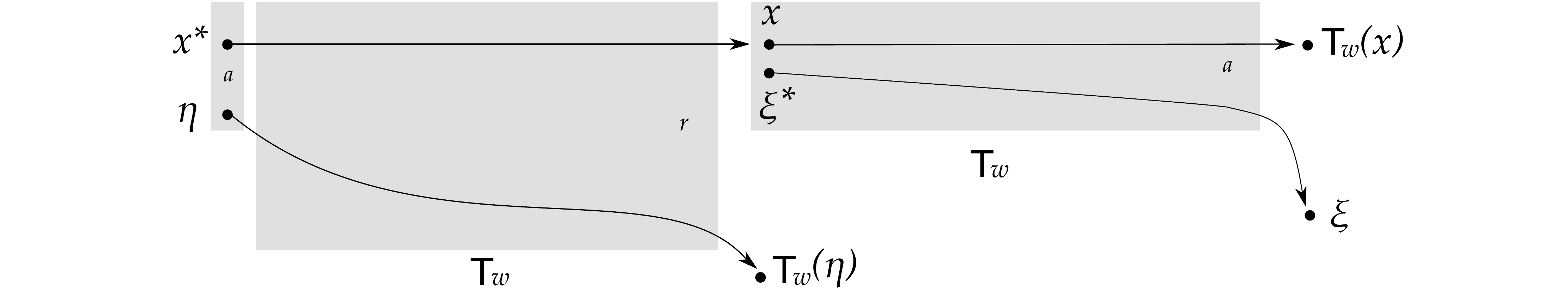}     \caption{Finite aperiodicity}
   \label{fig:bip}
\end{figure} 

\begin{remark}
Any Ruelle-expanding semigroup defined on a compact space $X$ is $n$-finitely aperiodic for every $n\in\mathbb N$,  which can be seen by the following argument. Let $K$ be a finite set such that $X\subset \cup_{z\in K} B_{a/2}(z)$ and let $r=\rm{diam} (X)$. Choose $\xi\in K\cap B_a(T_w(x))$, then the Ruelle expanding property assures the existence of $\xi^\ast$ and hence condition (i). Choose any $x^\ast\in T_w^{-1}(x)$ and  $\eta\in K\cap B_a(x^\ast)$, then condition (ii) follows.
\end{remark}

Without specifying, $\mathcal S$ is always $(a,\lambda)$-Ruelle-expanding in this paper. We use the notations $x\ll y, x\gg y, x\asymp y$ to indicate that there exists a  positive constant $C$ such that $x\le Cy, x\ge Cy, C^{-1}y\le x\le C y$ respectively.

\section{Quotients of Ruelle operators}\label{sec:quotients} 
In this section we introduce a family of quotients of Ruelle operators, which will act as strict contractions on the set of probability measures. It provides an effective construction of
the relevant measures, whereas a normalisation of the Ruelle operators through invariant functions has no dynamical significance in the setting of semigroups or sequential dynamics due to purely functorial reasons, as noted in  Remark \ref{remark:equi} below.

To begin with, let $\varphi_i:X \to \bbR$, $i\in\cW$ be a continuous function. We also call $\varphi$ a potential. Define for a finite word $v = {i_1} {i_2} \ldots {i_k}\in \cW^k$ 
\[ \varphi_v(x):= \varphi_{{i_1}}(x) + \varphi_{{i_2}}(T_{i_1}(x)) + \cdots + \varphi_{{i_k}}(T_{i_1\ldots i_{k-1}}(x)).\] 
Then the Ruelle operator $L_v$ is defined by
\[ L_v(f)(x) := \sum_{T_v(y)=x} e^{\varphi_v(y)}f(y)\]
for $f$ in a suitable function space. Note that it follows from $T_v\circ T_u = T_{uv}$ that  
 $L_v\circ L_{u}= L_{uv}$ for any two finite words $u, v$. 
We now define the adequate function space. For $\alpha \in (0,1]$ and $f:X \to \bbR$,  the H\"older coefficient $D_\alpha(f)$  is
$$D_\alpha(f):= \sup_{x,y \in X, x \neq y} \frac{|f(x)-f(y)|}{d(x,y)^{\alpha}}$$
 and the space  of $\alpha$-H\"older functions $\cH_\alpha^*$ is 
$$
\mathcal{H}^\ast_\alpha := \left\{ f :  D_\alpha(f) < \infty \right\}.
$$
Let $\cH_\alpha$ denote  the subspace of bounded functions in $\mathcal{H}^\ast_\alpha$. It is well known that $\cH_\alpha$ is a Banach space with respect to the norm $\|\cdot\|:=\|\cdot\|_\infty + D_\alpha(\cdot)$. We are now in position to specify the class of potentials considered in here.
\begin{definition} \label{def:hoelder-summable}
We refer to $\varphi_i$ as a \emph{$\alpha$-H\"older potential}
if $\varphi_i \in \mathcal{H}^\ast_\alpha$. Moreover, for any finite word $v$, we say that $\varphi_v$ is a \emph{summable potential} if  
$\|L_v(\mathbf{1})\|_\infty < \infty$. 
\end{definition}
Suppose $\varphi_i$ is $\alpha$-H\"older for every $i\in \cW$. We shall estimate distortion of $\varphi_v$.  Due to $(a,\lambda)$-Ruelle-expanding property, for $v=i_1\ldots i_k \in \cW^k$ and $x, y, \tilde{x}\in X$ with $d(x, y) < a$ and $T_v(\tilde x)=x$, there exists a unique point $\tilde y\in T_v^{-1}(y)\cap B_a^v(\tilde x)$. 
Moreover
$$d(\tilde x, \tilde y) < \lambda^k d(x, y),\quad  d(T_{i_1 \ldots i_j}(\tilde x), T_{i_1 \ldots i_j}(\tilde y)) < \lambda^{k-j}  d({x},{y}),\  1\le j<k.$$ Hence, the inverse branch 
\begin{equation}\label{inverse branch}
(T_v)_{\tilde x}^{-1}: B_a(x) \to  B_a^v(\tilde x),\quad  y\mapsto \tilde y
\end{equation}
is well defined and contracts the distance at every intermediate step by $\lambda$. 
It follows that, for any pair $x, y$ with $d(x,y)<a$, there is a bijection from $T_v^{-1}(x)$ to $T_v^{-1}(y)$ given by 
\begin{equation}\label{bijection of inverse}
 \tilde x\mapsto \tilde y_{\tilde x}:=(T_v)^{-1}_{\tilde x}(y). 
 \end{equation}
Now H\"older continuity implies that whenever $d(x,y)<a$,
\begin{equation}\label{eq:bounded_distortion}
|\varphi_v(\tilde x)-\varphi_v(\tilde y_{\tilde x})| \leq \frac{\max_{i \in \cW} D_\alpha(\varphi_{i})}{1-\lambda^\alpha} d({x},{y})^{\alpha} =: C_\varphi d({x},{y})^{\alpha}.
\end{equation}
It follows from a simple argument that  $L_v$ maps $\mathcal{H}_\alpha$ to $\cH_\alpha$ if $\varphi_v$ is also summable.

As we are interested in operators who leave invariant the constant function $\mathbf{1}$, define for finite words $u, v$ 
\begin{eqnarray*}
\mathbb P_{u}^v(f) &:=& \frac{L_v(f  \cdot L_{u}(\mathbf{1}))}{L_{uv}(\mathbf{1})} =  \frac{L_{uv}(f\circ T_u)}{L_{uv}(\mathbf{1})}.
\end{eqnarray*}
It is clear from the definition that 
$$
\mathbb{P}_{u}^v(\mathbf{1}) =\mathbf{1}. 
$$
The motivation to consider these families of operators stems from the simple observation that, 
for finite words $u,v,w$, 
$$
\mathbb{P}_{uv}^w\circ \mathbb{P}_{u}^v(f) = \frac{L_w( \mathbb{P}_{u}^v(f)  \cdot 	L_{uv}(\mathbf{1}))}{L_{uvw}(\mathbf{1})} 
=  \frac{L_w( L_v(f \cdot L_u(\mathbf{1})))}{L_{uvw}(\mathbf{1})} =  \mathbb{P}_{u}^{vw}(f). 
$$
Hence with  $$\mathbb{P}^{w}(f) := L_w(f)/L_w(\mathbf{1}),$$ for a sequence of finite words $v_1, \ldots v_k$, 
\begin{eqnarray} \label{eq:iteration_1} \mathbb{P}^{v_1\ldots v_k} = 
 \mathbb{P}_{v_1\ldots v_{k-1}}^{v_k} \circ   \mathbb{P}_{v_1\ldots v_{k-2}}^{v_{k-1}} \circ \cdots \circ 
 \mathbb{P}_{v_1 v_2}^{v_3} \circ  \mathbb{P}_{v_1}^{v_2} \circ \mathbb{P}^{v_1}.
\end{eqnarray}
As a first result, we obtain  
$\mathcal{H}_\alpha$-invariance of these quenched operators. 
\begin{lemma}\label{lem:doeblin-fortet}  $\mathbb{P}_{u}^{v}$
is a bounded operator on $\mathcal{H}_\alpha$. Furthermore, for 
 $f \in \mathcal{H}_\alpha$ and $x,y$ with $d(x,y)<a$
\begin{align}
\label{eq:doeblin-fortet-sequencial} |\mathbb{P}_{u}^{v}(f)(x) - \mathbb{P}_{u}^{v}(f)(y)| \leq C_\varphi \left(2 \|f\|_\infty + \lambda^{|v|} D_{\alpha}(f) \right) d(x,y)^\alpha.
\end{align}
\end{lemma}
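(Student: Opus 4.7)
The plan is to exploit the identity $\mathbb{P}_u^v(f)(x) = L_{uv}(f \circ T_u)(x)/L_{uv}(\mathbf 1)(x)$ (the equality $L_v(g\cdot L_u(\mathbf 1))=L_{uv}(g\circ T_u)$ follows from unfolding the preimage sum over $T_v$ and then over $T_u$) and view $\mathbb{P}_u^v(f)(x)$ as a probability-weighted average: $\mathbb{P}_u^v(f)(x) = \sum_{\tilde x\in T_{uv}^{-1}(x)} w_{\tilde x}(x)(f\circ T_u)(\tilde x)$ with $w_{\tilde x}(x) := e^{\varphi_{uv}(\tilde x)}/L_{uv}(\mathbf 1)(x)$. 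Summability of each $\varphi_i$ and surjectivity of the $T_i$ ensure $L_{uv}(\mathbf 1)(x)\in(0,\infty)$, so $\{w_{\tilde x}(x)\}$ is a genuine probability distribution on the (possibly countable) preimage set, and hence $\|\mathbb{P}_u^v(f)\|_\infty \le \|f\|_\infty$, which handles the uniform part of $\mathcal H_\alpha$-boundedness.

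To obtain the H\"older estimate, fix $x,y$ with $d(x,y)<a$ and use the inverse-branch bijection $\tilde x \mapsto \tilde y_{\tilde x}:=(T_{uv})^{-1}_{\tilde x}(y)$ of \eqref{bijection of inverse} to pair preimages of $x$ and $y$. I would split
\begin{align*}
\mathbb{P}_u^v(f)(x) - \mathbb{P}_u^v(f)(y) &= \sum_{\tilde x} w_{\tilde x}(x)\bigl[(f\circ T_u)(\tilde x) - (f\circ T_u)(\tilde y_{\tilde x})\bigr] \\
&\qquad + \sum_{\tilde x}\bigl[w_{\tilde x}(x) - w_{\tilde y_{\tilde x}}(y)\bigr](f\circ T_u)(\tilde y_{\tilde x}),
\end{align*}
and call the summands $\mathrm I$ and $\mathrm{II}$. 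For $\mathrm I$, the intermediate contraction property of the inverse branch described around \eqref{inverse branch} gives $d(T_u(\tilde x), T_u(\tilde y_{\tilde x}))\le \lambda^{|v|} d(x,y)$, so $|\mathrm I|\le \lambda^{|v|\alpha}D_\alpha(f)d(x,y)^\alpha$, which matches (and is absorbed by) the $C_\varphi \lambda^{|v|}D_\alpha(f)d(x,y)^\alpha$ term in the stated inequality.

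For $\mathrm{II}$, the distortion estimate \eqref{eq:bounded_distortion} yields $|\varphi_{uv}(\tilde x) - \varphi_{uv}(\tilde y_{\tilde x})|\le C_\varphi d(x,y)^\alpha$ term by term, and summing gives $L_{uv}(\mathbf 1)(y)/L_{uv}(\mathbf 1)(x)\in[e^{-c_0},e^{c_0}]$ with $c_0:=C_\varphi d(x,y)^\alpha$. A direct comparison then shows $w_{\tilde y_{\tilde x}}(y)/w_{\tilde x}(x)\in[e^{-2c_0},e^{2c_0}]$, so $\sum_{\tilde x}|w_{\tilde x}(x) - w_{\tilde y_{\tilde x}}(y)|\le e^{2c_0}-1$. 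Because $d(x,y)<a$ the exponent $c_0$ is bounded by $C_\varphi a^\alpha$, and the elementary inequality $e^{2c_0}-1 \le 2c_0\, e^{2C_\varphi a^\alpha}$ reduces this to a multiple of $C_\varphi d(x,y)^\alpha$, giving $|\mathrm{II}|\le 2C_\varphi \|f\|_\infty d(x,y)^\alpha$ after absorbing the bounded constant $e^{2C_\varphi a^\alpha}$. Summing the two contributions yields \eqref{eq:doeblin-fortet-sequencial}; the case $d(x,y)\ge a$ for global $\mathcal H_\alpha$-boundedness follows from $\|\mathbb{P}_u^v(f)\|_\infty\le\|f\|_\infty$ via the trivial bound $2\|f\|_\infty\le 2\|f\|_\infty(d(x,y)/a)^\alpha$.

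The main obstacle I anticipate is term $\mathrm{II}$ in the non-compact setting: all termwise manipulations on the potentially countably infinite preimage set $T_{uv}^{-1}(x)$ must be justified, which reduces to the summability hypothesis of Definition~\ref{def:hoelder-summable} together with the uniform pointwise distortion supplied by \eqref{eq:bounded_distortion}. A secondary point worth flagging is that the numerical constant $C_\varphi$ appearing in the statement must be interpreted as absorbing the harmless multiplicative factor $e^{2C_\varphi a^\alpha}$, and the exponent $\lambda^{|v|}$ there should likewise be read as $\lambda^{|v|\alpha}$, the natural contraction rate in the $\alpha$-H\"older metric.
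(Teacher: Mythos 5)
Your argument is correct and is, at bottom, the same classical computation the paper delegates to Lemma~2.1 of \cite{BessaStadlbauer:2016}: pair the preimages of $x$ and $y$ by the inverse branches \eqref{bijection of inverse}, use the distortion bound \eqref{eq:bounded_distortion}, and split the difference into a term contracted along the fibre (controlled by $D_\alpha(f)$) and a weight-variation term (controlled by $\|f\|_\infty$); the global bound via the two cases $d(x,y)<a$ and $d(x,y)\ge a$ is also exactly the paper's. The packaging differs: the paper first estimates the unnormalised quantity $|L_v(fL_u(\mathbf 1))(x)-L_v(fL_u(\mathbf 1))(y)|\le C_\varphi L_{uv}(\mathbf 1)(x)(\|f\|_\infty+\lambda^{|v|}D_\alpha(f))d(x,y)^\alpha$ (by citation) and then divides, whereas you normalise first and compare the two probability vectors $\{w_{\tilde x}(x)\}$, $\{w_{\tilde y_{\tilde x}}(y)\}$ in total variation; this has the advantage of being self-contained and of making the bound $\|\mathbb P_u^v(f)\|_\infty\le\|f\|_\infty$ and the role of summability/surjectivity explicit. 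The only deviation is quantitative, and you flag it honestly: your route gives the coefficient $2C_\varphi e^{2C_\varphi a^\alpha}$ on $\|f\|_\infty$ and the rate $\lambda^{\alpha|v|}$ on $D_\alpha(f)$ rather than the literal $2C_\varphi$ and $\lambda^{|v|}$ of \eqref{eq:doeblin-fortet-sequencial} (note $\lambda^{\alpha|v|}\ge\lambda^{|v|}$, so it is not literally ``absorbed'' by the stated term, contrary to your remark after treating $\mathrm I$ — only your closing reinterpretation is accurate). This is harmless: the $\alpha$-power of $\lambda$ is indeed what the computation naturally produces, the paper itself quotes the lemma with yet another constant in the proof of Theorem~\ref{theorem:contraction}, and everything downstream survives verbatim after enlarging $\Delta$ in \eqref{eq:d star} to dominate the bigger constant and choosing $k_0$ with $\lambda^{\alpha k_0}\le 1/4$. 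So: correct proof, same mechanism, slightly weaker (but fully sufficient) constants, with the discrepancies properly identified.
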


\begin{proof} Following in verbatim the proof of Lemma 2.1 in \cite{BessaStadlbauer:2016}, one obtains that, for  $x,y$ with $d(x,y)<a$, 
\[ |L_v( f L_u(\mathbf{1}))(x) -  L_v( f L_u(\mathbf{1}))(y)|
\leq C_\varphi L_{uv}(\mathbf{1})(x) (\|f\|_\infty + \lambda^{|v|}D_\alpha(f)) d(x,y)^{\alpha}. \] 
The estimate \eqref{eq:doeblin-fortet-sequencial} 
follows from this as in \cite{BessaStadlbauer:2016}. It remains to show that the operators are bounded and leave invariant $\mathcal{H}_\alpha$. As $\mathbb{P}_{u}^{v}$ maps positive functions to positive functions
and $\mathbb{P}_{u}^{v}(\mathbf{1})= \mathbf{1}$, we have $\|\mathbb{P}_{u}^{v}(f) \|_\infty \leq \|f\|_\infty$. 
Furthermore, by considering the cases $d(x,y)<a$ and $d(x,y)\geq a$ separately, we obtain
\[ D_\alpha(\mathbb{P}_{u}^{v}(f)) \leq 
 \max\left\{ C_\varphi \left(2 \|f\|_\infty + \lambda^{|v|} D_{\alpha}(f)\right), 2 a^{-\alpha} \|f\|_\infty \right\}, \]
which proves that  $\mathbb{P}_{u}^{v}:\mathcal{H}_\alpha \to \mathcal{H}_\alpha $ is a well-defined and bounded operator. 
\end{proof}

We observe that Lemma \ref{lem:doeblin-fortet}, which requires H\"older continuity of the potentials and no further assumption 
on topological irreducibility, is one of the principal ingredients to prove
that the duals of the previous operators act as contractions on the space of probabilities.
The other ingredient is the following result for which  finite aperiodicity is essential.

\begin{lemma} \label{comparability}
 Suppose that $\mathcal{S}$ is jointly topologically mixing and finitely aperiodic, and that every 
 $\varphi_i$ is $\alpha$-H\"older and summable.  Then  $L_v(\mathbf{1})(x) \asymp L_v(\mathbf{1})(y)$, that is, there exists $C>0$ such that $1/C<L_v(\mathbf{1})(x)/  L_v(\mathbf{1})(y)< C$ for all finite words $v$ and $x, y\in X$.
\end{lemma}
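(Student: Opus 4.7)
The plan is to establish the uniform Harnack-type inequality $L_v(\mathbf{1})(x)\asymp L_v(\mathbf{1})(y)$ via a two-scale argument: bounded distortion for nearby points, and routing through the finite anchor set $K$ from Definition~\ref{def:finitely aperiodic} for arbitrary pairs.

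I would first handle the local case $d(x,y)<a$. The inverse branches \eqref{inverse branch} provide the bijection \eqref{bijection of inverse} between $T_v^{-1}(x)$ and $T_v^{-1}(y)$, and \eqref{eq:bounded_distortion} controls the corresponding Jacobian ratios by $e^{\pm C_\varphi a^\alpha}$. Summing termwise yields
\[
e^{-C_\varphi a^\alpha}\le \frac{L_v(\mathbf{1})(x)}{L_v(\mathbf{1})(y)} \le e^{C_\varphi a^\alpha},
\]
uniformly in the finite word $v$; denote this uniform constant by $\Lambda$.

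For the global case $d(x,y)\ge a$ and $|v|\ge n$, I would decompose $v=uw$ with $|w|=n$, so that $L_v(\mathbf{1}) = L_w(L_u(\mathbf{1}))$. Applying condition (ii) of Definition~\ref{def:finitely aperiodic} to $x$ and $w$ gives $\eta\in K$ and $x^*\in T_w^{-1}(x)$ with $d(x^*,\eta)<a$ and $d_w(x^*,\eta)<r$. Keeping only the $z=x^*$ term in the defining sum and combining $d_w$-distortion control of $\varphi_w$ with the local step applied to $L_u(\mathbf{1})$ (since $d(x^*,\eta)<a$) yields
\[
L_v(\mathbf{1})(x)\ge c_1\, L_u(\mathbf{1})(\eta),
\]
with $c_1>0$ uniform, since the finiteness of $K$ and $\cW^n$ provides a lower bound on $\min_{w,\eta}e^{\varphi_w(\eta)}$. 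Symmetrically, condition (i) applied to $y$ and $w$ yields $\xi\in K$ and $\xi^*\in T_w^{-1}(\xi)$ with $d(y,\xi^*)<a$; the local step gives $L_v(\mathbf{1})(y)\le \Lambda L_v(\mathbf{1})(\xi^*)$, and combining this with the single-term estimate $L_{wv}(\mathbf{1})(\xi)\ge e^{\varphi_w(\xi^*)}L_v(\mathbf{1})(\xi^*)$ together with summability $\|L_w(\mathbf{1})\|_\infty<\infty$ and the identity $L_{wv}(\mathbf{1})=L_v(L_w(\mathbf{1}))$ produces an upper bound of the form $L_v(\mathbf{1})(y)\le C_2\, L_u(\mathbf{1})(\xi')$ for some $\xi'\in K$.

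Both $L_v(\mathbf{1})(x)$ and $L_v(\mathbf{1})(y)$ are thus reduced to a comparison with $L_u(\mathbf{1})$ on the finite set $K$. Since $K$ and $\cW^n$ are finite and $|u|=|v|-n$, I would close the argument by induction on $|v|$ in blocks of length $n$, with the finitely many base cases $|v|<n$ treated by pre-composing $v$ with a suitable $n$-word to reach the global regime and invoking summability. The main obstacle I anticipate is the mismatch between the single-preimage lower bound coming from (ii) and the full-preimage-sum upper bound obtained via (i) and summability: arranging the bookkeeping so that the per-block multiplicative constants remain uniform in $v$ and the induction does not degrade the comparability constant is precisely where the joint finiteness of $K$ and $\cW$ is essential.
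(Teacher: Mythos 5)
Your local step and your lower bound via condition (ii) are exactly the paper's (including the use of $d_w(x^*,\eta)<r$ and finiteness of $K\times\cW^n$ to control $e^{\varphi_w(x^*)}$), but the upper bound and the way you propose to match the two bounds both have genuine gaps. For the upper bound: applying condition (i) at the base point $y$ gives $\xi^*\in T_w^{-1}(\xi)$ with $d(y,\xi^*)<a$, and the single-term estimate you invoke (note also the slip $L_{wv}$ vs.\ $L_{vw}$: with the paper's convention $L_u\circ L_w=L_{wu}$, the inequality $\sum_{T_w(z)=\xi}e^{\varphi_w(z)}L_v(\mathbf{1})(z)\ge e^{\varphi_w(\xi^*)}L_v(\mathbf{1})(\xi^*)$ concerns $L_{vw}(\mathbf{1})(\xi)$) only yields $L_v(\mathbf{1})(\xi^*)\le e^{-\varphi_w(\xi^*)}L_{vw}(\mathbf{1})(\xi)$ --- a \emph{longer} word evaluated at $\xi$, multiplied by $e^{-\varphi_w(\xi^*)}$, which on a non-compact $X$ with merely summable potentials has no uniform bound. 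It does not produce $L_v(\mathbf{1})(y)\le C_2 L_u(\mathbf{1})(\xi')$. The paper's upper bound is structurally different: it peels off the outer $n+m$ symbols using summability, then applies condition (i) not at the base point but at $T_u(\tilde x)$ for \emph{each} preimage $\tilde x\in T_{up}^{-1}(x')$, obtaining an injection $\tilde x\mapsto\tilde\xi^*$ into $\bigcup_{\xi\in K}T_{up}^{-1}(\xi)$ and hence $L_{up}(\mathbf{1})(x')\ll\sum_{\xi\in K}L_{up}(\mathbf{1})(\xi)$; this is the step your outline is missing.

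The second gap is the matching of the two one-sided reductions, which you correctly flag as the main obstacle but do not resolve. Your lower bound lands at $L_u(\mathbf{1})(\eta)$ with $\eta\in K$ depending on $x$, your intended upper bound at $L_u(\mathbf{1})(\xi')$ with $\xi'$ depending on $y$; comparing $L_u(\mathbf{1})$ at two anchor points of $K$ lying farther than $a$ apart is the lemma itself, and the block-by-block induction you propose multiplies a fixed constant $>1$ per block of length $n$, giving a comparability constant that grows exponentially in $|v|$ rather than a uniform $C$. The paper avoids any induction by decomposing $v=upwq$ with an extra block $w$ of length $m$ supplied by \emph{joint topological mixing}: for all $\xi,\eta\in K$ and $|w|\ge m$ there is $\eta^*\in T_w^{-1}(\eta)$ with $d(\eta^*,\xi)<a$. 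Routing the lower bound through this block lets one steer from the anchor $\eta$ given by (ii) to the anchor $\xi_0$ maximizing $L_{up}(\mathbf{1})$ on $K$, so both bounds become comparable to the single $x$-independent quantity $\max_{\xi\in K}L_{up}(\mathbf{1})(\xi)$, with constants depending only on $\mathcal{S},\varphi,K,n,m$. Your proposal never uses joint topological mixing, yet it is indispensable precisely here; finally, short words are treated in the paper not by pre-composition alone but via $L_{v'v}(\mathbf{1})(x)\asymp L_v(\mathbf{1})(x)\,L_{v'}(\mathbf{1})(x)$ for an auxiliary long word $v'$, using the already-proven case.
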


\begin{proof} First note that for any $x,y\in X$ with $d(x,y)<a$ and any finite word $v$, the bijection \eqref{bijection of inverse} and the estimate \eqref{eq:bounded_distortion} imply that
$L_v(\mathbf{1})(x) \asymp L_v(\mathbf{1})(y)$. 

Suppose $\mathcal S$ is $n$-finitely aperiodic. Let $K$ a finite set and $r>0$ be given by finite aperiodicity. It follows from Ruelle-expanding and jointly topological mixing that there exists $m\in \mathbb N$ such that for all $\xi, \eta \in K$ and $|w|\ge m$, there exists $\eta^* \in X$ with $T_w(\eta^*)=\eta$ and $d(\eta^*, \xi)<a$.

We now show the lemma for any $x, y\in X$ and all finite words $v$ with $|v|>2n+m$. Take such a finite word $v$, we will select preimages of $x$ as follows, illustrated in Figure~\ref{fig:comparable-construction}.
\begin{figure}[h] 
   \centering
   \includegraphics[width=\textwidth]{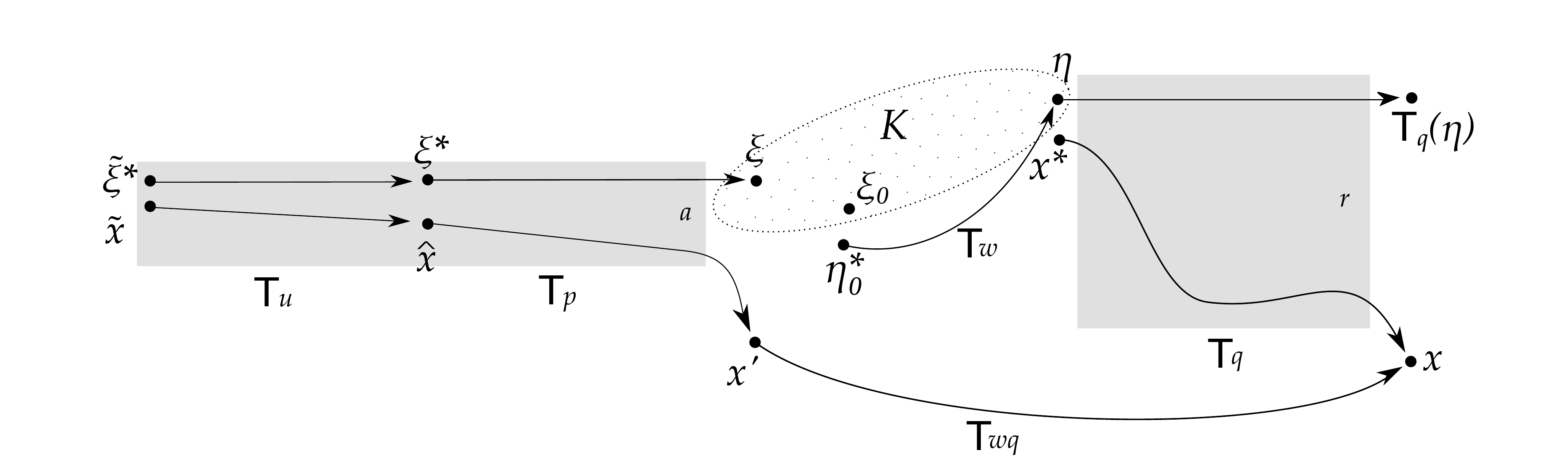} 
   \caption{Selection of preimages}
   \label{fig:comparable-construction}
\end{figure}
Decompose $v=upwq$ where $u,w,p,q$ are finite words and $|p|=|q|=n, |w|=m$.
Note that
$$
L_v(\mathbf 1)(x)=L_{wq}(L_{up}(\mathbf 1))(x)
\le \sup_{i\in\cW}\|L_i(\mathbf 1)\|_\infty^{n+m}\sup_{x'\in T^{-1}_{wq}(x)}\! L_{up}(\mathbf 1)(x').
$$
Fix $x'\in T^{-1}_{wq}(x)$. For any $\tilde x\in T^{-1}_{up}(x')$, let $\hat x=T_u(\tilde x)$. There exist by condition (i) of finite aperiodicity, $\xi\in K$ and $\xi^\ast\in T_p^{-1}(\xi)$ such that $d_p(\hat x, \xi^\ast)<a$. Let $\tilde \xi^\ast=(T_u)_{\tilde x}^{-1}(\xi^\ast)$, the inverse branch defined  in \eqref{inverse branch}. Then using \eqref{eq:bounded_distortion}
$$e^{\varphi_{up}(\tilde x)}=e^{\varphi_u(\tilde x)}e^{\varphi_p(\hat x)}\le e^{C_\varphi a^\alpha+\varphi_{u}(\tilde \xi^\ast)} e^{na^\alpha+\varphi_p(\xi^\ast)}=e^{C_\varphi a^\alpha+na^\alpha}e^{\varphi_{up}(\tilde \xi^\ast)}.$$
Because $d_{up}(\tilde x, \tilde \xi^\ast)<a$ and $T_{up}(\tilde\xi^\ast)=\xi$, one has $\tilde x=(T_{up})^{-1}_{\tilde \xi^\ast}(x')$ and $\tilde \xi^\ast=(T_{up})^{-1}_{\tilde x}(\xi)$. Therefore different $\tilde x$ is associated to different $\tilde \xi^\ast$, so that
$$L_{up}(\mathbf 1)(x')=\sum_{\tilde x\in T_{up}^{-1}(x')} e^{\varphi_{up}(\tilde x)}\ll \sum_{\tilde x\in T_{up}^{-1}(x') } e^{\varphi_{up}(\tilde \xi^{\ast})}\le \sum_{\xi\in K} L_{up}(\mathbf 1)(\xi).$$
Hence $$L_{v}(\mathbf 1)(x)\ll \sum_{\xi\in K}L_{up}(\mathbf 1)(\xi).$$
On the other hand, there exist by condition (ii) of finite aperiodicity a preimage $x^\ast\in T_q^{-1}(x)$ and  $\eta\in K$ such that $d(x^\ast,\eta)<a$ and $\eta\in B_{r}^q(x^\ast).$ As $d(x^\ast, \eta)<a$, we know that $L_{upw}(\mathbf 1)(x^\ast)\asymp L_{upw}(\mathbf 1)(\eta)$. Then
$$L_v(\mathbf 1)(x)\ge  e^{\varphi_q(x^\ast)}L_{upw}(\mathbf 1)(x^\ast)\gg e^{\varphi_q(\eta)-nr^\alpha}L_{upw}(\mathbf 1)(\eta)\gg L_{upw}(\mathbf 1)(\eta).$$
The last estimate holds because $q\in\cW^n$ and $\eta\in K$ both range over finite sets. Now for any $\xi\in K$ one can find $\eta^*\in T_w^{-1}(\eta)$ such that $d(\eta^*, \xi)<a$, then find  such a $\eta^*_0$ for $\xi_0$ that achieves $\max_{\xi_\in K}L_{up}(\mathbf 1)(\xi)$. Then $L_{up}(\mathbf 1)(\xi_0)\asymp L_{up}(\mathbf 1)(\eta^\ast_0)$ and 
\begin{align*}
L_{upw}(\mathbf 1)(\eta)&=\sum_{\eta^*\in T_w^{-1}(\eta)}e^{\varphi_w(\eta^\ast)} L_{up}(\mathbf 1)(\eta^*)\ge e^{\varphi_w(\eta_0^\ast)}L_{up}(\mathbf 1)(\eta_0^\ast)\\
&\gg e^{\varphi_w(\eta_0^\ast)}L_{up}(\mathbf 1)(\xi_0)\gg L_{up}(\mathbf 1)(\xi_0).
\end{align*}
The last estimate holds because $\varphi_w$ is continuous, $\eta_0^\ast\in \overline{B_a(\xi_0)}$, $\xi_0\in K$ and $w\in \cW^m$ range over finite sets. Therefore
$$L_v(\mathbf 1)(x)\gg \max_{\xi\in K} L_{up}(\mathbf 1)(\xi).$$

All the constants absorbed into $\ll$ or $\gg$ are determined by $\mathcal S, \varphi, K, m, n$ (essentially by $\mathcal S$ and $\varphi$), in particular independent of $v, x, y$. 
 It follows from the above estimates that $L_v(\mathbf{1})(x) \asymp L_v(\mathbf{1})(y)$ for any $x,y\in X$. 

Lastly when  $|v|\le 2n+m$, take any finite word $|v'|>2n+m$, then for any $x\in X$ 
\begin{align*}
L_{v'v}(\mathbf 1)(x)=L_v(L_{v'}(\mathbf 1))(x)=\sum_{\tilde x\in T_v^{-1}(x)} e^{\varphi(\tilde x)}L_{v'}(\mathbf 1)(\tilde x)
&\asymp \sum_{\tilde x\in T_v^{-1}(x)} e^{\varphi(\tilde x)}L_{v'}(\mathbf 1)(x)\\
&= L_v(\mathbf 1)(x)L_{v'}(\mathbf 1)(x)
\end{align*}
by the already-proven case. So $L_v(\mathbf 1)(x)\asymp L_{v'v}(\mathbf 1)(x)/L_{v'}(\mathbf 1)(x)$, hence
for any $x, y\in X$, $L_v(\mathbf 1)(x)\asymp L_v(\mathbf 1)(y)$.
\end{proof}

\section{Contraction in the Vaserstein distance} 
Let $\mathcal{M}_1(X)$ refer to the space of Borel probability measures on $X$. 
Recall that the Vaserstein distance $W$ of $\mu, \nu \in \mathcal{M}_1(X)$ 
defined by
\[ W(\mu, \nu) := \inf\left\{ \int d(x,y) dP: P \in \Pi(\mu, \nu) \right\} \]
is a compatible metric with weak convergence, where $\Pi(\mu, \nu)$ refers to the couplings of $\mu$ and $\nu$, that is the set of probability measures on $X \times X$ with marginal distributions $\mu$ and $\nu$. Moreover, by Kantorovich's duality,  
\[ W(\mu, \nu) = \sup \left\{ \Big| \int f  d(\mu-\nu)\Big|: D_1(f)\leq 1 \right\}.\]
Let ${\mathbb P_u^v}^\ast$ denote the dual operator of $\mathbb P_u^v$ on $\mathcal M_1(X)$. In order to obtain a contraction of $W({\mathbb{P}_{u}^{v}}^\ast(\cdot), {\mathbb{P}_{u}^{v}}^\ast(\cdot))$, 
the estimates of Lemma \ref{lem:doeblin-fortet} indicate that for $a$-close measures, one should consider $(d(x,y))^\alpha$ instead of $d(x,y)$. However, for distant measures, the method of proof below based on an idea in  \cite{HairerMattingly:2008} (see also  \cite{Stadlbauer:2015,KloecknerLopesStadlbauer:2015,BessaStadlbauer:2016,StadlbauerZhang:2017a}) requires a truncated distance. We consider 
\begin{equation} \label{eq:d star}  {d}^\ast(x,y) := \min\left\{1, \Delta \,{d(x,y)^\alpha} \right\}, \quad \Delta:= \max\{4C_\varphi,a^{-\alpha}\}\end{equation}
Observe that, by construction $d(x,y)< a$ whenever ${d}^\ast(x,y)<1$. In order to see that $d^\ast$ is a metric, observe that the triangle inequality follows from $x^\alpha + y^\alpha \geq (x+y)^\alpha$ for $x,y \geq 0$ and $0< \alpha \leq 1$, which is an inequality that easily can be deduced from the concavity of $x \mapsto x^\alpha$. The remaining assertion that $d^\ast(x,y) = 0$ if and only if $x=y$ is trivial. 

In order to define the  ${d}^\ast$-Lipschitz functions and their Lipschitz coefficients, set 
$$\overline{D}(f) := \max\left\{\sup_{x,y \in X}  |f(x)-f(y)|, D_\alpha^{\hbox{\tiny loc}}(f)/\Delta \right\},$$ where
\[D_\alpha^{\hbox{\tiny loc}}(f):=  \sup\left\{ \frac{|f(x)-f(y)|}{d(x,y)^{\alpha}} : x,y \in X, 0< d(x,y) < \Delta^{-\frac{1}{\alpha}}\right\}.  \]
As it can be easily seen, $\left\{f:  \overline{D}(f)< \infty \right\}$  is the space of ${d}^\ast$-Lipschitz functions  and, in particular, by Kantorovich's duality, the Vaserstein metric $\overline{W}$ with respect to $d^\ast$ is characterised  through local H\"older continuous
functions by
\[   
\overline{W}(\mu, \nu) = \sup \left\{ \Big| \int f  d(\mu- \nu) \Big|: 
\overline{D}(f) \leq 1  \right\}.
\] 
Note that $\overline D(f)\le 2\|f\|_\infty+ \Delta^{-1} D_\alpha(f)$, so functions in $\cH_\alpha$ have finite $d^*$-Lipschitz norms, and since $D_\alpha(f)\le \Delta \overline D(f)$, the norms $\|\cdot\|$ and $\|\cdot\|_\infty+\overline D(\cdot)$ are equivalent.
\begin{theorem}
\label{theorem:contraction} 
Suppose that  $\mathcal{S}$ is jointly topologically mixing and finitely aperiodic Ruelle-expanding semigroup, and that every potential $\varphi_i$ is $\alpha$-H\"older and summable.
Then there exist $k_0 \in \bbN$ and $s \in (0,1)$ such that for all finite words $u,v$ with $|v|\ge k_0$ and $\nu_1 , \nu_2\in \mathcal{M}_1(X)$ and $f$ with  $\overline{D}(f) < \infty$,
\begin{align*}
\overline{W}({\mathbb{P}_{u}^{v}}^\ast(\nu_1), {\mathbb{P}_{u}^{v}}^\ast (\nu_2)) &\leq  s^{n} \overline{W}( \nu_1 , \nu_2),\\
\quad \overline{D}(\mathbb{P}_{u}^{v}(f))  &\leq s^{n} \overline{D}(f).
\end{align*}
\end{theorem}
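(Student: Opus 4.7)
The plan is to establish a one-step contraction of the seminorm $\overline D$ for words of length $\geq k_0$, iterate via the chain rule $\mathbb{P}_u^{vw} = \mathbb{P}_{uv}^w \circ \mathbb{P}_u^v$, and transfer to the Vaserstein contraction by Kantorovich duality. Since $\mathbb{P}_u^v$ preserves constants, we may and will centre $f$ so that $\|f\|_\infty \leq \overline D(f)/2$.

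First, I would control the local H\"older part of $\overline D$. Combining the Doeblin--Fortet inequality \eqref{eq:doeblin-fortet-sequencial} of Lemma \ref{lem:doeblin-fortet} with $D_\alpha(f) \leq \Delta\,\overline D(f)$ yields, for $0 < d(x,y) < \Delta^{-1/\alpha} \leq a$,
\begin{equation*}
|\mathbb{P}_u^v(f)(x) - \mathbb{P}_u^v(f)(y)| \leq C_\varphi\bigl(1 + \lambda^{|v|}\Delta\bigr)\overline D(f)\,d(x,y)^\alpha.
\end{equation*}
Dividing by $\Delta$ and exploiting $C_\varphi/\Delta \leq 1/4$ gives $D_\alpha^{\hbox{\tiny loc}}(\mathbb{P}_u^v(f))/\Delta \leq (1/4 + C_\varphi\lambda^{|v|})\overline D(f)$, which is at most $\overline D(f)/2$ provided $k_0$ is large enough that $C_\varphi\lambda^{k_0} \leq 1/4$. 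The same estimate bounds $|\mathbb{P}_u^v(f)(x) - \mathbb{P}_u^v(f)(y)|$ by $\overline D(f)/2$ in the small-distance regime $d(x,y) < \Delta^{-1/\alpha}$.

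The crux, and the step I expect to be the main obstacle, is the oscillation bound in the large-distance regime $d(x,y) \geq \Delta^{-1/\alpha}$: one needs $\delta > 0$, uniform in $u$, in $v$ with $|v| \geq k_0$, and in $x, y$, such that
\begin{equation*}
|\mathbb{P}_u^v(f)(x) - \mathbb{P}_u^v(f)(y)| \leq (1-\delta)\,\overline D(f).
\end{equation*}
Using the explicit representation
\begin{equation*}
{\mathbb{P}_u^v}^\ast\delta_x = \frac{1}{L_{uv}(\mathbf 1)(x)}\sum_{y' \in T_v^{-1}(x)} e^{\varphi_v(y')}\,L_u(\mathbf 1)(y')\,\delta_{y'},
\end{equation*}
this reduces to constructing a coupling of ${\mathbb{P}_u^v}^\ast\delta_x$ and ${\mathbb{P}_u^v}^\ast\delta_y$ that shares a uniformly positive proportion of mass. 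I would follow the Hairer--Mattingly strategy of \cite{HairerMattingly:2008,KloecknerLopesStadlbauer:2015,Stadlbauer:2015,BessaStadlbauer:2016}: decompose $v = pwq$ with $|p| = |q| = n$ the aperiodicity index and $|w|$ at least the mixing threshold used in the proof of Lemma \ref{comparability}. Condition (ii) of finite aperiodicity produces preimages of $x$ and $y$ under $T_q$ lying close to the fixed finite set $K$, while condition (i) together with joint topological mixing funnels further preimages through common points of $K$. The comparability Lemma \ref{comparability} controls the weights $L_p(\mathbf 1)$ of these ``central'' preimages and, crucially, shows that the factor $L_u(\mathbf 1)(y')$---the only place where the prefix $u$ appears and where no contraction is available---has bounded multiplicative fluctuation between paired preimages. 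Matching this common mass yields the required $\delta$.

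Combining the two regimes gives $\overline D(\mathbb{P}_u^v(f)) \leq c\,\overline D(f)$ with $c = \max(1/2, 1-\delta) < 1$ for all $|v| \geq k_0$. Iterating the chain rule over successive $k_0$-blocks and absorbing the possible remainder block via the boundedness of $\mathbb{P}_u^w$ from Lemma \ref{lem:doeblin-fortet} yields $\overline D(\mathbb{P}_u^v(f)) \leq s^{|v|}\overline D(f)$ for some $s \in (0,1)$ and all $|v| \geq k_0$. Finally, Kantorovich duality combined with the adjoint relation gives
\begin{equation*}
\overline W({\mathbb{P}_u^v}^\ast\nu_1,{\mathbb{P}_u^v}^\ast\nu_2) = \sup_{\overline D(f)\leq 1}\Bigl|\int\mathbb{P}_u^v(f)\,d(\nu_1 - \nu_2)\Bigr| \leq s^{|v|}\,\overline W(\nu_1,\nu_2),
\end{equation*}
which establishes the Vaserstein contraction.
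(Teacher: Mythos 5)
Your local step, the final iteration over $k_0$-blocks, and the closing duality argument are all sound, and in fact deriving the Vaserstein contraction from the functional contraction $\overline D(\mathbb{P}_u^v(f))\le s^{|v|}\overline D(f)$ by Kantorovich duality is a mild streamlining of the paper, which instead proves the measure contraction first for Dirac masses and then extends by optimal transport. The gap is in the step you yourself flag as the crux. You reduce the large-distance regime to ``constructing a coupling of ${\mathbb{P}_u^v}^\ast\delta_x$ and ${\mathbb{P}_u^v}^\ast\delta_y$ that shares a uniformly positive proportion of mass.'' As stated this is impossible: both measures are purely atomic, supported on the disjoint fibres $T_v^{-1}(x)$ and $T_v^{-1}(y)$, so they are mutually singular and share no mass at all. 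What is actually needed is a coupling that transports a uniformly positive proportion of mass over pairs whose $d^\ast$-distance is bounded away from $1$ (say $\le 1/2$), since only then does $|f(z_1)-f(z_2)|\le \overline D(f)\,d^\ast(z_1,z_2)$ produce a gain over the trivial bound $\overline D(f)$.

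This is exactly what your block decomposition cannot deliver. You split $v=pwq$ with $|p|=|q|=n$ the aperiodicity index and $|w|$ at least the mixing threshold of Lemma \ref{comparability}; every block length is then dictated by the structure of $\mathcal S$, and after funnelling the preimages of $x$ and $y$ near a common region of $K$ you can only guarantee they are $a$-close, which gives $d^\ast\le\Delta a^\alpha$ and $\Delta a^\alpha\ge 1$ by \eqref{eq:d star} --- no gain. The paper instead decomposes $v=v_3v_2v_1$ with $|v_1|=n_1$ (aperiodicity, condition (ii) only; condition (i) enters solely through Lemma \ref{comparability}), $|v_2|=n_2$ (mixing), and, crucially, a \emph{free} block $v_3$ with $|v_3|\ge n_3$ where $n_3$ is chosen so that $\Delta(a\lambda^{n_3})^\alpha<1/2$: the inverse-branch bijection along $T_{v_3}$ pairs the preimages of the $a$-close points $x^\#,y^\#$ at distance $<a\lambda^{|v_3|}$, hence at $d^\ast$-distance $<1/2$, and the matched mass $Q_{(x,y)}(X^2)$ is bounded below uniformly in $x,y,u,v$ via the distortion estimate \eqref{eq:contraction-preimg} and Lemma \ref{comparability} applied to the word $uv_3$ (this is also where the prefix $u$ is neutralised, close in spirit to your remark on $L_u(\mathbf 1)$). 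This contracting block is the missing ingredient in your sketch, and it is also why $k_0$ must be taken at least $n_1+n_2+n_3$ with $n_3$ depending on $\Delta$, $a$, $\lambda$, $\alpha$, rather than $2n+m$.
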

\begin{remark}\label{rmk:aperiodicity} Under the additional hypothesis that $X$ is compact, the  
condition of finite aperiodicity is automatically satisfied.
\end{remark}

\begin{proof} As in \cite{HairerMattingly:2008}, we first prove the assertions for 
 Dirac measures and then extend the partial result by optimal transport to arbitrary probability measures.
\subsubsection*{(1) Local contraction} Assume that ${d}^\ast(x,y)<1$ and that $f$ is $d^\ast$-Lipschitz continuous.  
Since $d(x,y)<a$ as soon as ${d}^\ast(x,y)<1$, Lemma \ref{lem:doeblin-fortet} gives that
\[ \mathbb{P}_{u}^{v}(f)(x) -   \mathbb{P}_{u}^{v}(f)(y) \leq \left(2C_\varphi \|f\|_\infty + \lambda^{|v|} D_\alpha^{\hbox{\tiny loc}}(f)\right)(d(x,y))^\alpha . \]
Furthermore, as $\mathbb{P}_{u}^{v}(\mathbf{1})=\mathbf{1}$, one may suppose without loss of generality that $\inf f=0$, and therefore, $\|f\|_\infty \leq \overline{D}(f)$. Dividing by $\Delta$ and choosing $k_0$ such that  $\lambda^{k_0} \leq 1/4$, it follows that for $v$ with $|v| \geq k_0$ 
\[  \mathbb{P}_{u}^{v}(f)(x) -   \mathbb{P}_{u}^{v}(f)(y) \leq \left( \frac{\|f\|_\infty}{2} +  \frac{D_\alpha^{\hbox{\tiny loc}}}{4 \Delta} \right) d^\ast(x,y) \leq  \frac{3 \overline{D}(f)}{4}d^\ast(x,y). \]    
Hence, by Kantorovich's duality, 
\[\overline{W}({\mathbb{P}_{u}^{v}}^\ast(\delta_x), {\mathbb{P}_{u}^{v}}^\ast(\delta_y)) \leq \frac34 d^\ast(x,y)  = \frac34 \overline{W}(\delta_x,\delta_y).\]

\subsubsection*{(2) Global contraction}
If ${d}^\ast(x,y) =1$, an upper bound for $\overline{W}$ can be obtained by construction of a coupling based on finite aperiodicity. 
In order to do so, fix an open set $U$ of diameter smaller than $a/2$. Suppose $\mathcal S$ is $n_1$-finitely aperiodic and $K, r$ are given by finite aperiodicity. As $\mathcal S$ is jointly topologically mixing, one can find $n_2$ such that $T_{w}(U) \cap B_{a}(\xi) \neq \emptyset$ for all $w\in \cW^{n_2}$ and  $\xi \in K$ and that $\lambda^{n_2} <1/8$. Choose $n_3$ large such that 
$C_{n_3}:=\Delta (a\lambda^{n_3})^\alpha<1/2.$ Let $k_0= n_1+n_2+n_3$.

Let $n\ge k_0$. For $v\in \cW^n$, write $v=v_3v_2v_1$ where $|v_1|=n_1, |v_2|=n_2$ and $|v_3|\ge n_3$. 
For any $x\in X$, we will select a preimage $x^\#$ in $T_{v_2v_1}^{-1}(x)$ as below, illustrated in Figure \ref{fig:coupling}.
\begin{figure}[h] 
   \centering
   \includegraphics[width=0.7\textwidth]{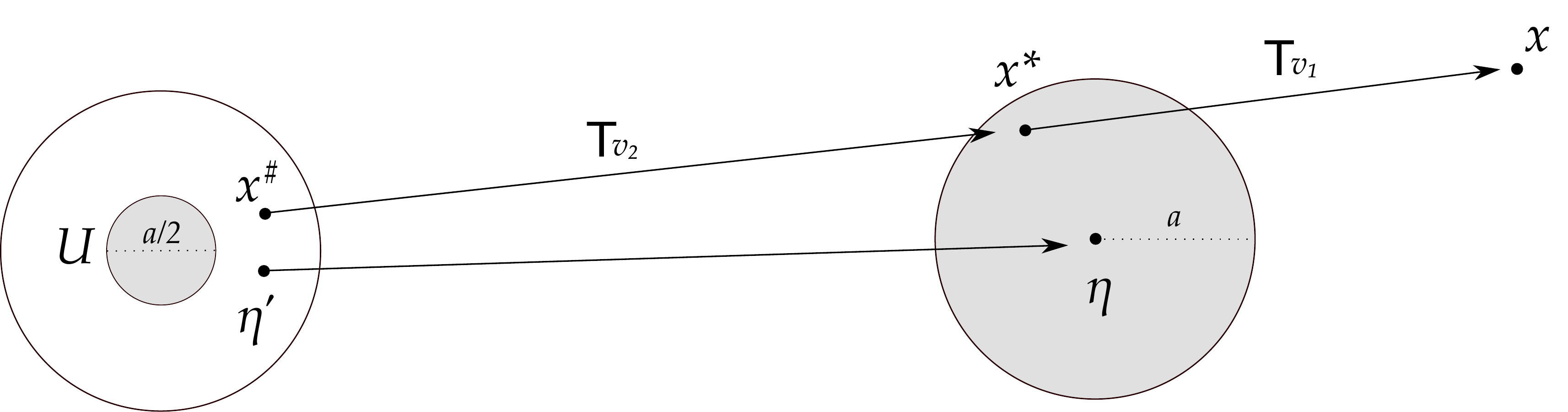} 
   \caption{The map $x\mapsto x^\#$}
   \label{fig:coupling}
\end{figure}

Let  $\eta\in K$ and ${x}^\ast \in X$ be given by condition (ii) of finite aperiodicity so that $T_{v_1}({x}^\ast)=x, d(x^\ast, \eta)<a$ and $x^\ast\in B_{r}^{v_1}(\eta)$. 
Now the choice of $n_2$ and Ruelle expanding property allow us to find a preimage $\eta'\in T_{v_2}^{-1}(\eta)$ such that $\eta'\in B_{a/8}(U)$.
Use Ruelle expanding property again to find a preimage $x^\#\in T_{v_2}^{-1}(x^\ast)$ such that $x^\# \in B_{a/8}(\eta')\subset B_{a/4}(U)$. One has $|\varphi_{v_2}(x^\#)-\varphi_{v_2}(\eta')|\le C_{\varphi} a^\alpha$ by \eqref{eq:bounded_distortion}. So that $$ |\varphi_{v_2v_1}(x^\#) - \varphi_{v_2v_1}(\eta')| \leq C_\varphi a^\alpha+n_1 r^\alpha \max_{i\in \cW} D_\alpha(\varphi_i),$$ 
and hence $$e^{ \varphi_{v_2v_1}(x^\#)} \asymp e^{\varphi_{v_2v_1}(\eta')}=e^{\varphi_{v_2}(\eta')}e^{\varphi_{v_1}(\eta)}.$$
Since $\eta'$ lies in a fixed bounded region $B_{a/8}(U)$ and $\varphi$ is continuous and  $\eta\in K,  v_1\in \cW^{n_1}, v_2\in \cW^{n_2}$ range over finite sets, one concludes that for all $x\in X, v_1\in\cW^{n_1}, v_2\in\cW^{n_2}$
\begin{equation}\label{eq:contraction-preimg}
e^{ \varphi_{v_2v_1}(x^\#)}\asymp 1.
\end{equation}

Now for any pair $(x,y)\in X^2$, find as before $x^\#, y^\#\in B_{a/4}(U)$. Then $d(x^\#, y^\#)<a$. As stated in \eqref{bijection of inverse}, there is a bijection $\tilde x\mapsto \tilde y$ from $T_{v_3}^{-1}(x^\#)$ to $T_{v_3}^{-1}(y^\#)$. Pair $(\tilde x, \tilde y)$ together by this bijection and set a subprobability measure on $X^2$
$$Q_{(x,y)} := \min \left\{\sum_{(\tilde x, \tilde y)}    \frac{e^{\varphi_{v}(\tilde x)}L_u(\mathbf{1})(\tilde x)}{L_{uv}(\mathbf{1})(x)}  \; \delta_{(\tilde{x},\tilde{y})},\  \sum_{(\tilde x, \tilde y)}   \frac{e^{\varphi_{v}(\tilde y)}L_u(\mathbf{1})(\tilde y)}{L_{uv}(\mathbf{1})(y)} \; \delta_{(\tilde{x},\tilde{y})}\right\}.$$
Note that $Q_{(x,y)}(X^2)=Q_{(x,y)}(\{(z_1, z_2):d(z_1, z_2)<a\lambda^{|v_3|}\})$. For any $A\subset X$ $$Q_{(x,y)}(A\times X)\le \sum_{T_v(z)=x} \frac{e^{\varphi_v(z)}\mathbf 1_A\cdot L_u(\mathbf 1)(z)}{L_{uv}(\mathbf 1)(x)}=\frac{L_v(\mathbf 1_A\cdot L_u(\mathbf 1))}{L_{uv}(\mathbf 1)}(x)={\mathbb P_{u}^{v}}^\ast(\delta_x)(A)$$
and similarly $Q_{(x,y)}(X\times A)\le {\mathbb{P}_{u}^{v}}^\ast(\delta_y)(A).$ Hence, there exists a further subprobability measure $R$ such that $P:= Q_{(x,y)}+R \in \Pi({\mathbb{P}_{u}^{v}}^\ast(\delta_x), {\mathbb{P}_{u}^{v}}^\ast(\delta_y))$ (see, e.g., \cite{HairerMattingly:2008}). Therefore, due to the choice of $n_3$,
\begin{align*}
 &\quad \overline{W}({\mathbb{P}_{u}^{v}}^\ast(\delta_x), {\mathbb{P}_{u}^{v}}^\ast(\delta_y)) 
 \leq  \int d^\ast(z_1, z_2) dP \\
 &\leq  \Delta (a \lambda^{|v_3|})^\alpha P(\{ d(z_1, z_2)< a \lambda^{|v_3|} \}) + P(\{ d(z_1, z_2)\geq  a \lambda^{|v_3|} \})\\
 &\leq 1- C_{n_3} P(\{ d(z_1, z_2)< a \lambda^{|v_3|} \}) \leq 1 - C_{n_3} Q_{(x,y)}(X^2).
 \end{align*}
To get a lower bound for $Q_{(x,y)}(X^2)$, use \eqref{eq:contraction-preimg} to see
\begin{align*}
Q_{(x,y)} (X^2) &\asymp \min \left\{\sum_{T_{v_3}(\tilde x)=x^\#}    \frac{e^{\varphi_{v_3}(\tilde x)}L_u(\mathbf{1})(\tilde x)}{L_{uv}(\mathbf{1})(x)} , \sum_{T_{v_3}(\tilde y)=y^\#}   \frac{e^{\varphi_{v_3}(\tilde y)}L_u(\mathbf{1})(\tilde y)}{L_{uv}(\mathbf{1})(y)} \right\}\\
&=\min\left\{\frac{L_{uv_3}(\mathbf 1)(x^\#)}{L_{uv}(\mathbf 1)(x)},\  \frac{L_{uv_3}(\mathbf 1)(y^\#)}{L_{uv}(\mathbf 1)(y)} \right\}.
\end{align*}
Applying Lemma \ref{comparability} we get that for any $\xi_0\in K$
$$Q_{(x,y)}(X^2)\asymp \frac{1}{L_{v_2v_1}(\mathbf{1})(\xi_0)} \geq \min \{ (L_w(\mathbf{1})(\xi))^{-1} : \xi \in K, w\in \cW^{n_1+n_2}\}>0.$$
Hence, there is a lower bound $N \leq Q_{(x, y)}(X^2)$, independent of $x,y \in X$ and $v\in \cW^n$.  Therefore, increasing $n_3$ so that $C_{n_3}N<1$ if needed, 
$$\overline{W}({\mathbb{P}_{u}^{v}}^\ast(\delta_x), {\mathbb{P}_{u}^{v}}^\ast(\delta_y))  \leq    1 -   C_{n_3} N  
= (1 -  C_{n_3} N ) d^\ast(x,y) = (1 -   C_{n_3}N ) \overline{W}( \delta_x,\delta_y).
$$ 

Combining part (1) with part (2) of the proof and letting $t:= \max\{3/4, 1 - C_{n_3}N\}<1$, we obtain that  there exists $k_0$ such that for all finite words $u, v$ with $|v|\ge k_0$ and $x,y\in X$
\[\overline{W}({\mathbb{P}_{u}^{v}}^\ast(\delta_x), {\mathbb{P}_{u}^{v}}^\ast(\delta_y)) \leq t \overline{W}(\delta_x,\delta_y).\]
Using Kantorovich's duality, for $f$ with $\overline{D}(f) \leq 1$, it follows that  
\[ 
 |\mathbb{P}_{u}^{v}(f)(x) -  \mathbb{P}_{u}^{v}(f)(y)| = \left| \int f d{\mathbb{P}_{u}^{v}}^\ast(\delta_x) -  \int f d{\mathbb{P}_{u}^{v}}^\ast(\delta_y)  \right| \leq t.
\] 

\subsubsection*{(3) Contraction for arbitrary probability measures} The extension to arbitrary probability measures is a standard application of optimal transport and omitted as the proof is a straightforward adaption of \cite{HairerMattingly:2008}, \cite {Stadlbauer:2015} or \cite{KloecknerLopesStadlbauer:2015}.  We obtain that for any finite words $u, v$ with $|v|\ge k_0$ and any probability measures $\nu_1, \nu_2$
\[\overline{W}({\mathbb{P}_{u}^{v}}^\ast(\nu_1), {\mathbb{P}_{u}^{v}}^\ast(\nu_2)) \leq t \overline{W}(\nu_1,\nu_2).\]
\subsubsection*{(4) Iteration}
By the iteration rules given in \eqref{eq:iteration_1}, the theorem follows for $s = t^{1/2k_0}$.
\end{proof}

\section{Conformal measures, quenched exponential decay and continuity} 
From now on we always assume that $\mathcal{S}$ is jointly topologically mixing and finitely aperiodic and every potential $\varphi_i$ is $\alpha$-H\"older and summable, so that Theorem \ref{theorem:contraction} holds.
It has immediate consequences for the existence and regularity of two types of compact sets of probability measures, which are canonical generalisations of conformal measures and equilibrium states to the context of semigroups. 

\subsection{One-sided dynamics}

Denote by $\Sigma=\{i_1i_2\ldots: i_1, i_2,\ldots\in \cW\}$ the set of infinite words and by  $\theta(i_1i_2\ldots)=i_2i_3\ldots$ the shift map. For an infinite word $\omega=i_1 i_2\ldots   \in \Sigma$ and $k \in \mathbb{N}$, let
\[ [\omega]_k:=i_1 \ldots i_k \in\cW^k.\]
The first family of measures is constructed as follows, which generalises the notion of conformal measures.  
\begin{proposition} \label{prop:conformal}
For  any finite word $u$, infinite word $\omega$ and measure $\nu\in \mathcal{M}_1(X)$, the limit 
\[\mu_{u,\omega}:=  \lim_{l \to \infty}  {\mathbb{P}_{u}^{[\omega]_l}}^\ast(\nu) \]
exists and is independent of $\nu$. Furthermore, with $k_0$ and $s$ given by Theorem \ref{theorem:contraction},
 the following statements hold.
\begin{enumerate}[label={\rm(\roman*)}, itemsep=0pt, topsep=0pt]
\item For $k\geq k_0$ and any $\omega, \tilde{\omega}  \in \Sigma$ with $[\omega]_k = [\tilde{\omega}]_k$,
$\overline{W}(\mu_{u,\omega}, \mu_{u,\tilde{\omega}}) \leq s^k$.
\item   For  $k\geq k_0$ and $f \in \mathcal{H}_\alpha$,
\[ \left\| \mathbb{P}_{u}^{[\omega]_k}(f) - \int f d\mu_{u,\omega} \right\| \leq 2 s^k  \overline{D}(f) .\]
\item Let $\mu_{\omega}:= \mu_{\emptyset, \omega}$, then
\[ \mu_{u\omega}={\mathbb P^u}^*(\mu_{u,\omega}),\quad \mu_{u,\omega} = \mu_{u\omega}\circ T_u^{-1}. \]
If $v$ is a finite word, $$\mu_{u, v\omega}={\mathbb P_{u}^{v}}^\ast (\mu_{uv, \omega}).$$
\item Let $\lambda_{u, \omega} := \int L_u(\mathbf{1}) d\mu_\omega $, then
$$
L_u^\ast(\mu_\omega) = \lambda_{u,\omega} \mu_{u\omega},$$
and if $v$ is a finite word,
$$\lambda_{uv,\omega} = \lambda_{u,v\omega} \lambda_{v,\omega}.$$
\item The measures $\mu_{u, \omega}$ and $\mu_{\omega}$ are absolutely continuous to each other and
$$h_{u,\omega}:=\frac{d\mu_{u,\omega}}{d\mu_{\omega}}=\lambda_{u,\omega}^{-1} L_u(\mathbf{1}).$$
\end{enumerate}
\end{proposition}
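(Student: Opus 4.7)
The plan is to derive existence of $\mu_{u,\omega}$ and parts (i)--(ii) from Theorem~\ref{theorem:contraction} and the iteration rule, and then obtain the algebraic relations (iii)--(v) by passing to the limit in identities that hold at the level of operators. For the existence, let $m > l \geq k_0$ and decompose $[\omega]_m = [\omega]_l \cdot w$. Dualising $\mathbb{P}_u^{[\omega]_m} = \mathbb{P}_{u[\omega]_l}^w \circ \mathbb{P}_u^{[\omega]_l}$ and applying Theorem~\ref{theorem:contraction} together with $\overline{W}\leq 1$ (a consequence of $d^*\leq 1$) yields
\[ \overline{W}\bigl({\mathbb{P}_u^{[\omega]_m}}^*(\nu),\, {\mathbb{P}_u^{[\omega]_l}}^*(\nu)\bigr) \leq s^l \overline{W}\bigl({\mathbb{P}_{u[\omega]_l}^w}^*(\nu),\, \nu\bigr) \leq s^l, \]
so the sequence is Cauchy in the complete space $(\mathcal{M}_1(X), \overline{W})$, and independence of $\nu$ follows by applying the same contraction to two initial measures. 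Part~(i) is immediate: when $[\omega]_k = [\tilde\omega]_k$, the two approximating sequences factor through ${\mathbb{P}_u^{[\omega]_k}}^*$ for $l > k$, and the residual contraction is bounded by $s^k$. For (ii), combining the Hölder bound $\overline{D}(\mathbb{P}_u^{[\omega]_k}(f)) \leq s^k \overline{D}(f)$ with the Kantorovich estimate
\[ \left|\mathbb{P}_u^{[\omega]_k}(f)(x) - \int f\, d\mu_{u,\omega}\right| \leq \overline{D}(f)\,\overline{W}\bigl({\mathbb{P}_u^{[\omega]_k}}^*(\delta_x),\, \mu_{u,\omega}\bigr) \leq s^k \overline{D}(f), \]
and reading the result in the norm equivalent to $\|\cdot\|$ given by $\|\cdot\|_\infty + \overline{D}(\cdot)$, delivers the claimed bound.

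For (iii)--(v), the workhorse is the elementary identity
\[ \mathbb{P}_u^v(f) = \frac{L_{uv}(f\circ T_u)}{L_{uv}(\mathbf{1})} = \mathbb{P}^{uv}(f\circ T_u), \]
which dualises to ${\mathbb{P}_u^v}^*(\nu) = {\mathbb{P}^{uv}}^*(\nu)\circ T_u^{-1}$; taking $v = [\omega]_l$ and letting $l\to\infty$ produces $\mu_{u,\omega} = \mu_{u\omega}\circ T_u^{-1}$. From the iteration $\mathbb{P}^{uv} = \mathbb{P}_u^v \circ \mathbb{P}^u$ one reads off $\mu_{u\omega} = {\mathbb{P}^u}^*(\mu_{u,\omega})$, and $\mu_{u,v\omega} = {\mathbb{P}_u^v}^*(\mu_{uv,\omega})$ follows in the same way from $\mathbb{P}_u^{vw} = \mathbb{P}_{uv}^w \circ \mathbb{P}_u^v$; in both cases the continuity of ${\mathbb{P}^u}^*$ and ${\mathbb{P}_u^v}^*$ on $(\mathcal{M}_1(X), \overline{W})$ needed to interchange limit and operator is guaranteed by the Hölder bound of Lemma~\ref{lem:doeblin-fortet}. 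For (iv), I approximate $\int L_u(f)\, d\mu_\omega$ by $\int \mathbb{P}^{[\omega]_l}(L_u(f))\, d\nu$ and exploit the factorisation
\[ \mathbb{P}^{[\omega]_l}(L_u(f)) = \frac{L_{u[\omega]_l}(f)}{L_{[\omega]_l}(\mathbf{1})} = \mathbb{P}^{u[\omega]_l}(f)\cdot \mathbb{P}^{[\omega]_l}(L_u(\mathbf{1})); \]
applying (ii) to each factor yields uniform and exponentially fast convergence of the right-hand side to $\int f\, d\mu_{u\omega}\cdot \lambda_{u,\omega}$, hence $L_u^*(\mu_\omega) = \lambda_{u,\omega}\mu_{u\omega}$, and $\lambda_{uv,\omega} = \lambda_{u,v\omega}\lambda_{v,\omega}$ falls out of the functorial identity $L_{uv}^* = L_u^* \circ L_v^*$. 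Finally (v) combines (iii) and (iv) through the direct computation $L_u(f\circ T_u) = f\cdot L_u(\mathbf{1})$:
\[ \int f \cdot L_u(\mathbf{1})\, d\mu_\omega = \int L_u(f\circ T_u)\, d\mu_\omega = \lambda_{u,\omega}\int f\circ T_u\, d\mu_{u\omega} = \lambda_{u,\omega}\int f\, d\mu_{u,\omega}, \]
yielding $d\mu_{u,\omega}/d\mu_\omega = \lambda_{u,\omega}^{-1} L_u(\mathbf{1})$; mutual absolute continuity then follows from positivity of $L_u(\mathbf{1})$, a consequence of surjectivity of each $T_u$.

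The main technical point I anticipate is the argument for (iv), where one passes to the limit in a product of two exponentially convergent Hölder sequences; the crucial (but easy) preliminary observation is that $L_u(\mathbf{1})\in\mathcal{H}_\alpha$, which uses both summability and the distortion estimate~\eqref{eq:bounded_distortion}, so that (ii) applies to the second factor and furnishes its constant limit $\lambda_{u,\omega}$.
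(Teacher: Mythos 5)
Your proposal is correct and follows essentially the same route as the paper: existence, (i) and (ii) come from the contraction in Theorem~\ref{theorem:contraction} combined with the iteration rule $\mathbb{P}_{uv}^w\circ\mathbb{P}_u^v=\mathbb{P}_u^{vw}$, and (iii)--(v) are obtained by passing to the limit in the same operator identities (including the factorisation $\mathbb{P}^{[\omega]_l}(L_u f)=\mathbb{P}^{u[\omega]_l}(f)\cdot\mathbb{P}^{[\omega]_l}(L_u\mathbf{1})$ used for (iv)). The only deviation is cosmetic: the paper proves (v) by a direct limit computation rather than deducing it from (iii) and (iv) via $L_u(f\circ T_u)=f\,L_u(\mathbf{1})$, which is an equivalent argument.
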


\begin{proof}
For probability measures $\nu,\tilde{\nu}$ on $X$ and $l >k\ge k_0$,  Theorem \ref{theorem:contraction} implies  
\[ \overline{W}\left({\mathbb{P}_{u}^{[\omega]_k}}^\ast(\nu), {\mathbb{P}_{u}^{[\omega]_l}}^\ast(\tilde{\nu})\right) =  \overline{W}\left({\mathbb{P}_{u}^{[\omega]_k}}^\ast(\nu), {\mathbb{P}_{u}^{[\omega]_k}}^\ast\circ {\mathbb{P}_{u[\omega]_k}^{[\theta^k \omega]_{l-k}}}^\ast (\tilde{\nu})\right)\leq s^k. \]
Hence, $\{{\mathbb{P}_{u}^{[\omega]_k}}^\ast(\nu)\}_{k\geq k_0}$ is a Cauchy sequence and $\mu_{u,\omega} := \lim_k {\mathbb{P}_{u}^{[\omega]_k}}^\ast(\nu)$ exists and is independent of $\nu$. This, in particular, implies the estimate in (i). In order to show (ii), it suffices to consider $\nu=\delta_x$. If $k \geq k_0$,  we have that
$$\left|\mathbb{P}_{u}^{[\omega]_k}(f)(x)- \int f d\mu_{u,\omega}\right| \leq \overline{D}(f) s^k.$$ The estimate in  (ii) then follows from this combined with Theorem \ref{theorem:contraction}. 

The second part of (iii) follows from
$$\int \mathbb P_u^v(f)d\mu_{uv,\omega} =\lim_{k \to \infty} \mathbb P_{uv}^{[\omega]_k}\circ \mathbb P_u^v(f)(x)=\lim_{k\to\infty}\mathbb P_{u}^{v[\omega]_k}(f)(x)=\int f d\mu_{u, v\omega}.$$
The first part of (iii) follows from this and
\begin{align*}
\int f d\mu_{u,\omega} &= \lim_{k \to \infty} \frac{L_{[\omega]_k} (f L_u(\mathbf{1}))(x)}{ L_{u[\omega]_k}(\mathbf{1})(x)}  =  \lim_{k \to \infty} \frac{L_{u[\omega]_k}(f\circ T_u)(x)}{L_{u[\omega]_k}(\mathbf{1})(x)}\\
& = \int f\circ T_u d\mu_{u\omega} = \int fd\mu_{u\omega}\circ T_u^{-1}.
\end{align*}
(iv) holds because
\begin{align*}
\int L_u(f)d\mu_\omega & =   \lim_{k \to \infty} \frac{L_{[\omega]_k} (L_u(f))(x)}{ L_{[\omega]_k}(\mathbf{1})(x)} 
=  \lim_{k \to \infty} \frac{L_{u[\omega]_k} (f)(x)}{ L_{u[\omega]_k}(\mathbf{1})(x)}\cdot \frac{ L_{u[\omega]_k}(\mathbf{1})(x)}{ L_{[\omega]_k}(\mathbf{1})(x)}\\
& = \int f d\mu_{u\omega} \int L_u(\mathbf{1}) d\mu_\omega
\end{align*}
and
\begin{align*}
\lambda_{uv,\omega}\mu_{uv\omega}=L_{uv}^\ast(\mu_\omega)=L_u^\ast L_v^\ast(\mu_\omega)=L_u^\ast(\lambda_{v,\omega}\mu_{v\omega})=\lambda_{v,\omega}\lambda_{u,v\omega}\mu_{uv\omega}.
\end{align*}
(v) follows from
$$\int f  d\mu_{u,\omega}  = \lim_{k \to \infty} \frac{L_{[\omega]_k}(\mathbf{1})(x)}{L_{u[\omega]_k}(\mathbf{1})(x)} \cdot  \frac{L_{[\omega]_k}(f L_u(\mathbf{1}))(x)}{L_{[\omega]_k}(\mathbf{1})(x)} = \frac{1}{\lambda_{u,\omega}}  \int f L_u(\mathbf{1}) d\mu_\omega.$$
\end{proof}

\begin{remark} Recall that a probability measure $\nu$ is \emph{$(T_w,\varphi_w)$-conformal}, where $w$ is a finite word, if there exists $c > 0$ such that $L_w^\ast(\nu)=c\nu$. Consider $\overline{w}:= ww\ldots \in \Sigma$ and $\mu_{\overline{w}}=\mu_{\emptyset, \overline w}$ given by Proposition  \ref{prop:conformal}. By (iv) of the same proposition,  $L_w^\ast(\mu_{\overline{w}})=\lambda_{w,\overline{w}} \mu_{\overline{w}}$, hence $\mu_{\overline{w}}$ is conformal. Moreover, (i) and $\mu_{u\overline{w}}\circ T_u^{-1} = \mu_{u,\overline{w}}$ imply
\[ \left\{\mu_{u,\omega} :\omega \in \Sigma\right\} =  \overline{\left\{ \mu_{u\overline{w}} \circ T_u^{-1} : w\in\cup_{k\geq 1} \cW^k\right\}}. \]
As $\Sigma$ is compact and $\omega \mapsto \mu_{u,\omega}$ is Lipschitz continuous by (i) of Proposition  \ref{prop:conformal}, $\{\mu_{u,\omega} : \omega\in\Sigma\}$ is compact. 
It is also worth mentioning that item (i) ensures that, for any finite word $u$, the family $\Sigma\ni \omega \mapsto \mu_{u,\omega}$
is H\"older continuous. Finally, the fact that any two asymptotic limits are equivalent (recall item (v)) will be useful to provide an 
application to characterize the boundary of a semigroup action in Section~\ref{sec:app}. 
\end{remark}

\subsection{Two-sided compositions} 

The second family of probabilities one will consider generalises the notions of invariant measures and equilibrium states. 
To attain that goal, despite the fact that the underlying dynamics is not invertible, we need to consider forward iterations of 
maps determined by two-sided sequences.
 Let $\Sigma^-$ refer to the set of left-infinite words, that is 
  $ \Sigma^- = \left\{ \ldots i_2 i_1 : i_1, i_2,\ldots \in \cW \right\},$
and for $k \in \mathbb{N}$ and $\sigma=\ldots i_2 i_1  \in \Sigma^-$ define
\[ _k[\sigma] := i_k \ldots  i_2 i_1\in\cW^k.\]
\begin{proposition} \label{prop:equilibrium}
For  any $\sigma \in \Sigma^-$, $\omega \in \Sigma$ and $\nu\in \mathcal{M}_1(X)$, the limit
\[
 \mu_{\sigma,\omega}:= \lim_{k,l \to \infty}  {\mathbb{P}_{_k[\sigma]}^{[\omega]_l}}^\ast(\nu) 
\]
exists and is independent of $\nu$. Furthermore, with $k_0$ and $s$ given by Theorem \ref{theorem:contraction},
 the following statements hold.
\begin{enumerate}[label={\rm(\roman*)}, itemsep=0pt, topsep=0pt]
\item 
For $k,l$ with $k\wedge l\ge k_0$ and $\sigma,\tilde{\sigma} \in \Sigma^-, \omega,\tilde{\omega} \in \Sigma$ with $_k[\sigma]={_k[\tilde{\sigma}]}, [\omega]_l=[\tilde{\omega}]_l$,
$\overline{W}(\mu_{\sigma, \omega}, \mu_{\tilde{\sigma},\tilde{\omega}}) \leq s^{k\wedge l}$.
\item   For  $k,l$ with $k\wedge l\ge k_0$ and $f\in \mathcal{H}_\alpha$,
\[ \left\| \mathbb{P}_{_k[\sigma]}^{[\omega]_l}(f) - \int f d\mu_{\sigma, \omega} \right\| \leq 2 s^{k\wedge l}  \overline{D}(f).\]
\item For a finite word $u$, $\mu_{\sigma u,\omega} = \mu_{\sigma, u\omega}\circ T_u^{-1}$.
\item \label{propitem:continnuity of h} The measures $\mu_{\sigma, \omega}$ and $\mu_\omega$ are absolutely continuous to each other and 
$h_{\sigma, \omega}:= d\mu_{\sigma, \omega}/d\mu_\omega$ satisfies
\[  \left\| h_{_k[\sigma],\omega} - h_{\sigma, \omega} \right\| \ll s^k,\]
where $\mu_\omega$ and $h_{_k[\sigma], \omega}$ are as given in the previous proposition. 
\end{enumerate}
\end{proposition}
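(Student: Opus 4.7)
The plan is to mimic the proof of Proposition~\ref{prop:conformal}, the new ingredient being a quantitative control of how extending the backward word $\sigma$ perturbs the measure $\mu_{u,\omega}$. Everything rests on the operator identity
\[ \mathbb{P}_{w_1 u}^v(f) = \frac{\mathbb{P}_u^v(f \cdot g_{w_1,u})}{\mathbb{P}_u^v(g_{w_1,u})}, \qquad g_{w_1, u} := \mathbb{P}^u(L_{w_1}(\mathbf{1})) = \frac{L_{w_1 u}(\mathbf{1})}{L_u(\mathbf{1})},\]
which is immediate from the definitions together with $L_{w_1 u} = L_u \circ L_{w_1}$. Theorem~\ref{theorem:contraction} provides $\overline{D}(g_{w_1, u}) \ll s^{|u|}\,\overline{D}(L_{w_1}(\mathbf{1}))$, while Lemmas~\ref{lem:doeblin-fortet} and~\ref{comparability} force $g_{w_1, u}$ to be uniformly comparable to $\|L_{w_1}(\mathbf{1})\|_\infty$, so that the oscillation of $g_{w_1,u}$ relative to $\int g_{w_1,u}\,d\nu$ is $\ll s^{|u|}$ uniformly in $w_1$ and $\nu$.

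For existence and items (i), (ii), I would run a Cauchy argument in $\overline{W}$. For $k' \geq k \geq k_0$ and $l' \geq l \geq k_0$, the triangle inequality isolates two pieces. The piece with $k$ fixed is controlled by $s^l$ through the iteration $\mathbb{P}_u^{vw} = \mathbb{P}_{uv}^w \circ \mathbb{P}_u^v$ combined with Theorem~\ref{theorem:contraction} and the bound $\overline{W}\leq 1$. The piece with $l$ fixed, writing ${_{k'}[\sigma]} = w_1 \cdot {_k[\sigma]}$, is handled via the operator identity and the covariance representation
\[ \int f\, d{\mathbb{P}_{w_1 u}^v}^\ast \delta_x - \int f\, d{\mathbb{P}_u^v}^\ast \delta_x = \frac{\operatorname{Cov}_{{\mathbb{P}_u^v}^\ast \delta_x}(f, g_{w_1, u})}{\int g_{w_1, u}\, d{\mathbb{P}_u^v}^\ast \delta_x},\]
together with $|\operatorname{Cov}_\nu(f,g)| \leq \|f\|_\infty \operatorname{osc}(g)$; this is $\ll s^k$ uniformly in $w_1$, $v$, and $x$. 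An optimal transport step (as in the proof of Theorem~\ref{theorem:contraction}) extends the point-mass bounds to arbitrary $\nu, \tilde\nu$. Item (i) follows via the triangle inequality since ${_k[\sigma]}={_k[\tilde\sigma]}$ and $[\omega]_l = [\tilde\omega]_l$ make the intermediate measures coincide, and (ii) combines the sup-norm estimate (through Kantorovich duality) with the $\overline{D}$-contraction of Theorem~\ref{theorem:contraction}.

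For (iii), the pointwise identity $L_u(f \circ T_u \cdot h) = f \cdot L_u(h)$ combined with $L_v \circ L_u = L_{uv}$ gives, for $k \geq |u|$,
\[ \mathbb{P}_{_k[\sigma u]}^{[\omega]_l}(f) = \mathbb{P}_{_{k-|u|}[\sigma]}^{u[\omega]_l}(f \circ T_u);\]
letting $l \to \infty$ (recognising $u[\omega]_l = [u\omega]_{l+|u|}$), then $k \to \infty$, yields $\mu_{\sigma u, \omega} = \mu_{\sigma, u\omega} \circ T_u^{-1}$. For (iv), Proposition~\ref{prop:conformal}(v) gives $h_{_k[\sigma], \omega} = L_{_k[\sigma]}(\mathbf{1})/\lambda_{_k[\sigma], \omega}$, and $L_{_{k'}[\sigma]}(\mathbf{1}) = L_{_k[\sigma]}(\mathbf{1}) \cdot g_{w_1, {_k[\sigma]}}$ combined with normalisation produces
\[ h_{_{k'}[\sigma], \omega} = h_{_k[\sigma], \omega} \cdot \frac{g_{w_1, {_k[\sigma]}}}{\int g_{w_1, {_k[\sigma]}}\, d\mu_{_k[\sigma], \omega}}.\]
Theorem~\ref{theorem:contraction} forces the fraction to equal $1 + r_k$ with $\|r_k\| \ll s^k$, while $\|h_{_k[\sigma], \omega}\| \ll 1$ by Lemmas~\ref{lem:doeblin-fortet} and~\ref{comparability}; the product rule for the H\"older norm then yields the Cauchy estimate $\|h_{_{k'}[\sigma], \omega} - h_{_k[\sigma], \omega}\| \ll s^k$, which produces the limit $h_{\sigma, \omega}$ at the advertised rate, and passing to the limit in $d\mu_{_k[\sigma], \omega} = h_{_k[\sigma], \omega} d\mu_\omega$ identifies it as $d\mu_{\sigma,\omega}/d\mu_\omega$.

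The hard part is the middle step of the Cauchy argument: Theorem~\ref{theorem:contraction} speaks directly to forward extensions of $v$, whereas here one must quantify the effect of a backward extension of $u$. The operator identity combined with the covariance trick is what converts the backward extension into a forward contraction statement about $\mathbb{P}^u(L_{w_1}(\mathbf{1}))$, allowing the rate $s^{|u|}$ to be inherited from Theorem~\ref{theorem:contraction} uniformly in $w_1$.
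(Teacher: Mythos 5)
Your argument is correct and rests on the same underlying mechanism as the paper's proof, but the packaging is genuinely different in two places, and both differences are improvements in transparency. The paper controls the backward extension by comparing $\mathbb{P}_{v}^{w}(f)$ and $\mathbb{P}_{uv}^{w}(f)$ through an intermediate term normalised by $\lambda_{u,\overline{vw}}$, the key input being \eqref{eq:estimate-lambda}, i.e.\ that $L_{u[\omega]_l}(\mathbf{1})/L_{[\omega]_l}(\mathbf{1})$ is within $O(s^l)$ of a constant; your exact quotient identity $\mathbb{P}_{w_1u}^{v}(f)=\mathbb{P}_u^v(fg_{w_1,u})/\mathbb{P}_u^v(g_{w_1,u})$ together with the covariance bound uses precisely the same fact (small relative oscillation of $g_{w_1,u}=\mathbb{P}^u(L_{w_1}(\mathbf{1}))$, obtained from Theorem~\ref{theorem:contraction} plus Lemma~\ref{comparability}), but dispenses with the auxiliary eigenvalue of the periodic word and gives the uniform bound $\ll s^{k}\|f\|_\infty$ in one line; the rest of your Cauchy argument (forward piece via the iteration rule and $\overline{W}\le 1$, then Kantorovich duality for (i)--(ii), and the identity $\mathbb{P}_{wu}^{v}(f)=\mathbb{P}_{w}^{uv}(f\circ T_u)$ for (iii)) matches the paper. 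For (iv) the paper simply quotes the first argument of Proposition~2.2 of \cite{BessaStadlbauer:2016}, whereas your multiplicative relation $h_{_{k'}[\sigma],\omega}=h_{_k[\sigma],\omega}\,g_{w_1,{_k[\sigma]}}/\int g_{w_1,{_k[\sigma]}}\,d\mu_{_k[\sigma],\omega}$ yields a self-contained proof of the rate $\ll s^k$ in the H\"older norm; this is worthwhile. Two small points you should make explicit: the uniformity in $w_1$ needs $\overline{D}(L_{w_1}(\mathbf{1}))\ll\|L_{w_1}(\mathbf{1})\|_\infty$, which follows from the distortion estimate \eqref{eq:bounded_distortion} together with Lemma~\ref{comparability} (the same ingredients behind \eqref{eq:estimate-lambda}); and mutual absolute continuity in (iv) requires the uniform two-sided bound $h_{u,\omega}\asymp 1$ from Lemma~\ref{comparability}, which persists in the limit $h_{\sigma,\omega}$. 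Also note that, like the paper's own proof, your estimate in (i) carries an implicit constant, which is absorbed by slightly enlarging $s$.
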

\begin{proof} 
As a consequence of Proposition \ref{prop:conformal} (ii), Lemma \ref{lem:doeblin-fortet} and Lemma \ref{comparability}, for any finite word $u$, infinite word $\omega \in \Sigma$ and $l \geq k_0$ we have that 
\begin{equation} \label{eq:estimate-lambda} \left\| L_{u[\omega]_l}(\mathbf{1})/  L_{[\omega]_l}(\mathbf{1}) - \lambda_{u,\omega} \right\| \leq s^l \overline{D}(L_u(\mathbf{1})) \leq C s^l \lambda_{u,\omega}, \end{equation}
for some $C>0$. Hence, for finite words $v \in \mathcal{W}^k$, $w \in \mathcal{W}^l$, $k \geq k_0$ and $f$ H\"older continuous, 
\begin{align*}
 & \left| \mathbb{P}_{v}^{w}(f) -   \mathbb{P}_{uv}^{w}(f)  \right| \\
\leq& \left| 
\frac{L_{w}(f L_{v}(\mathbf{1}))}{L_{vw}(\mathbf{1}) }
- 
\frac{L_{w}(f L_{uv}(\mathbf{1}))}{\lambda_{u,\overline{vw}} L_{vw}(\mathbf{1}) }
\right| +
\left| \frac{L_{w}(f L_{uv}(\mathbf{1}))}{\lambda_{u,\overline{vw}} L_{vw}(\mathbf{1}) }
- \frac{L_{w}(f L_{uv}(\mathbf{1}))}{L_{uv w}(\mathbf{1}) } \right| 
\\
  \leq &
 \frac{L_{w}\left(|f| L_{v}(\mathbf{1})  \left| 1- \frac{L_{uv}(\mathbf{1})}{ \lambda_{u,\overline{vw}}L_{v}(\mathbf{1})  }\right|   \right)}{L_{vw}(\mathbf{1})   }
 +
 \frac{L_{w}(|f| L_{uv}(\mathbf{1}))}{L_{uv w}(\mathbf{1}) } 
 \left| 
 \frac{L_{uv w}(\mathbf{1})}{ \lambda_{u,\overline{vw}}L_{vw}(\mathbf{1}) } -1 
 \right| \\
\leq  & C \left( \mathbb{P}_{v}^{w}(|f|) s^k + \mathbb{P}_{uv}^{w}(|f|) s^{k+l} \right),
\end{align*}
where we used the notation $\overline{u}:=(u u \dots)$ to denote the periodic word formed by $u$ blocks.
Now assume that $\nu$ and $\tilde{\nu}$ are probability measures and $f$ is H\"older continuous with $\overline{D}(f) \leq 1$ and $\inf_{x \in X} f(x) =0$. In particular, $\|f\|_\infty \leq 1$.  By the above and Proposition \ref{prop:conformal}, for $\sigma,\tilde{\sigma} \in \Sigma^-$ and $\omega,\tilde{\omega} \in \Sigma$ such that 
$_k[\sigma]={_k[\tilde{\sigma}]}, [\omega]_l=[\tilde{\omega}]_l$ and $k\wedge l\geq k_0$,
\begin{align*}
 & \left| \int \mathbb{P}_{_k[\sigma]}^{[\omega]_{l}}(f) d\nu  - \int \mathbb{P}_{_k[\tilde{\sigma}] }^{[\tilde{\omega}]_l}(f)  d\tilde{\nu} \right| \\
 \leq & 
 \int   \left| \mathbb{P}_{_k[\sigma]}^{[\omega]_{l}}(f) -  \mathbb{P}_{_k[\tilde{\sigma}]}^{ [\omega]_{l}}(f)  \right| d\nu 
  +
  \left| \int \mathbb{P}_{_k[\tilde{\sigma}]}^{[\omega]_{l}}(f) d\nu -  \int \mathbb{P}_{_k[\tilde{\sigma}]}^{[\omega]_{l}}(f) d\tilde{\nu} \right| \\
 \leq & 
 C  \left( 
 2 \|\mathbb{P}_{_k[\sigma]}^{[\omega]_{l}}(f)\|_\infty s^k + \|\mathbb{P}_{_k[\sigma]}^{[\omega]_{l}}(f)\|_\infty s^{k+l}  + 
  \|\mathbb{P}_{_k[\tilde{\sigma}]}^{[\omega]_{l}}(f)\|_\infty s^{k+l} 
 \right) + 2 s^{l} \\
 \leq & 2C(s^k + s^{k+l}) + 2s^l \ll s^{k\wedge l}  .
\end{align*}
Hence, by Kantorovich's duality and completeness of the space of probability measures, $\lim_{k,l\to\infty} {\mathbb{P}_{_k[\sigma]}^{[\omega]_{l}}}^\ast (\nu)$ exists, is independent of $\nu$ and the estimate in part (i) holds. Part (ii) is an immediate consequence of part (i), and the proof of (iii) follows as in Proposition \ref{prop:conformal}. Proposition \ref{prop:conformal} (v) indicates that $h_{\sigma,\omega}$ is the limit of $h_{_k[\sigma],\omega}$ and by the first argument in Proposition 2.2 in \cite{BessaStadlbauer:2016}, it follows that $\|h_{_k[\sigma],\omega} - h_{_l[\sigma],\omega}\|_\infty \ll s^{k\wedge l}$. Then the argument in there can be easily adapted to obtain exponential convergence with respect to $\|\cdot \|_{d^\ast}$ in part (iv).
\end{proof}

\begin{remark}\label{remark-adic flow} The first part of the above proposition implies that the map $(\sigma, \omega) \mapsto \mu_{\sigma, \omega}$ is Lipschitz continuous with respect to the metric 
\[d((\sigma, \omega), (\tilde{\sigma},\tilde{\omega})) := \min\{ s^{k\wedge l}: {_k[\sigma]}={_k[\tilde{\sigma}]}, [\omega]_l=[\tilde{\omega}]_l\}.\]
In particular, the image of each compact subset of $\Sigma^-\times\Sigma$ is a compact subset of the space of probability measures. 

Moreover, by fixing an order on $\cW$, the associated adic flow $h_t$ on $\Sigma^-\times \Sigma$ is uniquely ergodic (see \cite{Fisher:2004a}) and, in particular, for any H\"older continuous $f:X \to \mathbb R$, the continuity of $(\sigma, \omega) \to \int f d\mu_{\sigma, \omega}$ implies that 
\[ \frac1T \int_0^T\int f(x) d\mu_{h_t(\sigma, \omega)}(x) dt \xrightarrow{T \to \infty} \iint  f(x) d\nu_{\sigma, \omega}(x) dm(\sigma, \omega) \] uniformly, where $m$ refers  to the Parry measure (or measure of maximal entropy). The analogue of this statement holds for $\omega \to \int f d\mu_{\omega, \omega}$ and Birkhoff sums with respect to the odometer on $ \Sigma$, or with respect to uniformly ergodic adic flows or adic transformations acting on compact subsets of $\Sigma^-\times \Sigma$ or $\Sigma$, respectively.
\end{remark}

The result provides the following link to invariant measures and equilibrium states. A finite word $w$ generates a periodic infinite word $\overline{w}:= (ww\ldots) \in \Sigma$ and a periodic left-infinite word $\underline w:=(\ldots ww)\in \Sigma^-$. Then, by Proposition  \ref{prop:equilibrium}, the measure $\mu_{\underline{w},\overline{w}}$ is $T_w$-invariant,  $d \mu_{\underline{w},\overline{w}} = h_{\underline{w},\overline{w}}d\mu_{\overline{w}}$ and
\[L_{w}(h_{\underline{w},\overline{w}}) = \lambda_{w,\overline{w}} h_{\underline{w},\overline{w}}.\] Here $\lambda_{w, \overline w}$ is given as in Proposition \ref{prop:conformal}.

The following result identifies $\mu_{\underline{w},\overline{w}}$ as the unique equilibrium state of $T_w$ with respect to the 
H\"older potential $\varphi_w$. Note that the statement avoids the notion of pressure as $X$ might be non-compact. However, if $X$ is compact, then $\log  \lambda_{w,\overline{w}}$ is equal to the pressure (\cite{Ruelle:1989}) and one obtains the usual notion of equilibrium state.  
In the proposition, $H_{\mu}(T_w)$ refers to Kolmogorov's entropy.
\begin{proposition} \label{prop:eq state}
\begin{align*} \log  \lambda_{w,\overline{w}} & 
{=}  H_{\mu_{\underline{w},\overline{w}}}(T_w) +  \int \varphi_w  d\mu_{\underline{w}, \overline{w}} \\
\nonumber & = \sup \left\{ H_{\nu}(T_w) +   \int \varphi_w  d\nu: \nu \in \mathcal{M}_1(X),  \nu = \nu \circ T_w^{-1}  \right\}.
\end{align*}
Furthermore, $\mu_{\underline{w},\overline{w}}$ is the unique measure which realises the supremum. 
\end{proposition}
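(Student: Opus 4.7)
The plan is to identify $\mu := \mu_{\underline{w},\overline{w}}$ as the unique equilibrium state through a direct Jacobian computation followed by a fibrewise Jensen inequality. Set $\lambda := \lambda_{w,\overline{w}}$ and $h := h_{\underline{w},\overline{w}}$. By Proposition~\ref{prop:conformal}(v), Proposition~\ref{prop:equilibrium}(\ref{propitem:continnuity of h}) and Lemma~\ref{comparability}, $h$ is bounded above and away from zero, so $\log h$ is bounded and the cocycle $\log h - \log h \circ T_w$ integrates to zero against any $T_w$-invariant probability. Since $L_w^\ast \mu_{\overline{w}} = \lambda \mu_{\overline{w}}$, the Jacobian of $T_w$ with respect to $\mu_{\overline{w}}$ is $\lambda e^{-\varphi_w}$, and the chain rule applied to $d\mu = h\, d\mu_{\overline w}$ gives
\[ J_\mu(y) = \lambda\, e^{-\varphi_w(y)} \, \frac{h(T_w y)}{h(y)}. \]
Rokhlin's entropy formula, valid in the Ruelle-expanding setting with summable potential once a countable generating partition adapted to the inverse branches is chosen, then yields $H_\mu(T_w) = \int \log J_\mu \, d\mu = \log \lambda - \int \varphi_w \, d\mu$, which is the first equality.

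For the variational upper bound, put $\psi := \varphi_w - \log\lambda + \log h - \log h \circ T_w$, so that $e^{-\psi} = J_\mu$ and, by $L_w h = \lambda h$, $\sum_{T_w y = x} e^{\psi(y)} = 1$ for every $x$. Let $\nu$ be any $T_w$-invariant probability. Combining Rokhlin's formula for $\nu$ with the fibrewise change of variables $\int F\, d\nu = \int \sum_{T_w y = x} J_\nu^{-1}(y) F(y)\, d\nu(x)$ and Jensen's inequality applied to the probability weights $q(y) := J_\nu^{-1}(y)$ on each fibre gives
\[
H_\nu(T_w) + \int \psi \, d\nu \;=\; \int \sum_{T_w y = x} q(y) \log \frac{e^{\psi(y)}}{q(y)} \, d\nu(x) \;\le\; \int \log \!\sum_{T_w y = x} e^{\psi(y)} \, d\nu(x) \;=\; 0,
\]
which, together with the vanishing of the cocycle integral under $\nu$, is exactly $H_\nu(T_w) + \int \varphi_w \, d\nu \le \log\lambda$.

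For uniqueness, suppose $\nu$ attains the supremum. Equality in Jensen forces $J_\nu = J_\mu$ $\nu$-a.e., so $\nu$ is invariant under the Markov operator $\tilde L f := \lambda^{-1} h^{-1} L_w(fh)$, which satisfies $\tilde L \mathbf{1} = \mathbf{1}$. Iterating one obtains $\tilde L^n f = h^{-1} \, \mathbb{P}^{w^n}(fh) \cdot L_{w^n}(\mathbf{1})/\lambda^n$. By Theorem~\ref{theorem:contraction} applied to the blocks $w^n$ together with Proposition~\ref{prop:conformal}, $\mathbb{P}^{w^n}(fh)$ converges uniformly to $\int fh \, d\mu_{\overline{w}} = \int f\, d\mu$ and $L_{w^n}(\mathbf{1})/\lambda^n$ converges uniformly to $h$. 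Hence $\tilde L^n f \to \int f \, d\mu$ uniformly, and integrating against the $\tilde L^\ast$-invariant measure $\nu$ yields $\int f\, d\nu = \int f\, d\mu$ for every bounded measurable $f$, so $\nu = \mu$.

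The main obstacle will be the justification of Rokhlin's formula and of the fibrewise change-of-variables identity in this possibly non-compact setting with countably many inverse branches; these rely crucially on summability of $\varphi_w$, on the uniform two-sided bounds for $h$ inherited from Lemma~\ref{comparability}, and on the existence of a generating partition whose elements are small enough for $T_w$ to be injective on each. Once these standard ingredients are in place, both the variational inequality and the uniqueness step reduce to the algebraic manipulations outlined above.
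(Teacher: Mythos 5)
Your proof is correct and takes essentially the same route as the paper: the Jacobian identity $J_{\mu_{\underline{w},\overline{w}}}=\lambda\, e^{-\varphi_w}\,(h\circ T_w)/h$ combined with Rokhlin's formula gives the first equality, and the fibrewise Jensen inequality with weights $J_\nu^{-1}$ gives the variational bound together with the equality case $J_\nu=J_{\mu_{\underline{w},\overline{w}}}$. The only difference is that you spell out the last step of uniqueness (deducing $\nu=\mu_{\underline{w},\overline{w}}$ from equality of Jacobians via the normalised operator $\tilde L$ and Theorem~\ref{theorem:contraction}), which the paper leaves implicit; just note that the uniform convergence $\tilde L^n f\to\int f\,d\mu_{\underline{w},\overline{w}}$ is established (and needed) only for $f\in\cH_\alpha$, which already determines the measure.
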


\begin{proof} 
Let $J_{\mu_{\underline{w},\overline{w}}} := {d \mu_{\underline{w},\overline{w}}\circ T_w}/{d \mu_{\underline{w},\overline{w}}}$, and let
$$\tilde{\varphi}_w := \varphi_w + \log h_{\underline{w},\overline{w}} - \log h_{\underline{w},\overline{w}}\circ T_w - \log  \lambda_{w,\overline{w}}.$$
 By construction, $J_{\mu_{\underline{w},\overline{w}}}  = \exp(-\tilde{\varphi}_w)$ and, as $T_w$ is Ruelle expanding, Rokhlin's formula for entropy implies that
 \begin{align*}   H_{\mu_{\underline{w},\overline{w}}}(T_w)   & = \int \log J_{\mu_{\underline{w},\overline{w}}}   d\mu_{\underline{w},\overline{w}} \\ 
 & = \log  \lambda_{w,\overline{w}} - 
 \int (\varphi_w  + \log h_{\underline{w}, \overline{w}}  -  \log h_{\underline{w}, \overline{w}} \circ T_w) d\mu_{\underline{w}, \overline{w}}  \\
& = 
  \log  \lambda_{w,\overline{w}} - 
 \int \varphi_w  d\mu_{\underline{w}, \overline{w}}.
 \end{align*}
This proves the the first identity. Now suppose that $\nu$ is an invariant probability measure with $ H_{\nu}(T_w) +   \int \varphi_w d\nu  \geq  \log  \lambda_{w,\overline{w}}$. Then, by Rokhlin's formula, the invariance of $\nu$ and the definition of the transfer operator of $T_w$ with respect to $\nu$, 
\begin{align*} 
0 & \leq    H_{\nu}(T_w) +   \int \varphi_w  d\nu -  \log  \lambda_{w,\overline{w}}
\\
  & = \int (\log J_\nu +  \varphi_w + \log h_{\underline{w},\overline{w}} - \log h_{\underline{w},\overline{w}}\circ  T_w  -   \log  \lambda_{w,\overline{w}}) d\nu \\
  & = \int \log \frac{J_\nu}{J_{\mu_{\underline{w},\overline{w}}} }  d\nu 
 = \int \sum_{T_w(y)=x} \frac1{J_\nu(y)}  \log \frac{J_\nu(y)}{J_{\mu_{\underline{w},\overline{w}}}  (y)  } d\nu(x).
     \end{align*}
As $\nu$ is invariant, it follows that $\sum_{T_w(y)=x} 1/{J_\nu(y)} =1$ for all $x \in X$. Hence, by Jensens's inequality, 
\begin{align*} 
0 & \leq    H_{\nu}(T_w) +   \int \varphi_w  d\nu -  \log  \lambda_{w,\overline{w}} \stackrel{\ast}{\leq} 
\int  \log \sum_{T_w(y)=x} \frac1{J_\nu(y)}  \frac{J_\nu(y)}{J_{\mu_{\underline{w},\overline{w}}}  (y)  }d\nu(x) = 0. 
\end{align*}
Moreover, equality holds in $(\ast)$ if and only ${J_\nu(y)}/{J_{\mu_{\underline{w},\overline{w}}}(y)  }=1$ a.s.  
\end{proof}

\begin{remark}\label{remark:equi} 
By usual normalisation procedure, replacing the potential $\varphi_w$ with $\tilde{\varphi}_w $ one then obtains a new operator $\tilde{L}_w$ with $\tilde{L}_w(\mathbf{1}) = \mathbf{1}$, that is, $\tilde{L}_w$ is normalised and $\tilde{L}_w^\ast(\mu_{\underline w, \overline w})=\mu_{\underline w, \overline w}$. In particular, part (ii) of 
Proposition \ref{prop:conformal} applied to the semigroup generated by $T_w$ implies that 
$\tilde{L}_w$ has a spectral gap. However, the construction depends on the specific periodic word $\overline{w}$ and is in general not functorial, that is $\tilde{L}_{vw} \neq \tilde{L}_{w}\circ \tilde{L}_{v}$.
\end{remark}

\section{Annealed exponential decay} 

So far we have considered only quenched operators, which are determined by iterations in $\mathcal S$ tracked by certain finite words and their limiting behaviour. As stated in the introduction, another objective is to study annealed operators, which are averages of all the quenched operators tracked by finite words of given lengths. To be more precise, 
suppose that the one-sided full shift of finite alphabet $(\Sigma, \theta)$ is endowed with a non-singular probability measure $\rho$. For every $k\in\mathbb N$, define the \emph{averaged transfer operator}  
$$
\cA_k(f)(x)  := \int_\Sigma L_{[\omega]_k}(f)(x)\, d\rho(\omega) 
$$
for $f\in\cH_\alpha$. One can do so for more general shifts, but we keep $\Sigma$ to be a topological mixing subshift of finite type for simplicity. Naturally, one would need some properties of the shift space $(\Sigma, \theta, \rho)$ to study the operator $\mathcal A_k$. We summarise them below.

Since $\rho$ is non-singular, for a finite word $ u$, let $p_u:\Sigma\to\mathbb R_+$ be defined by $$p_u(\omega) := \frac{d\rho}{d\rho\circ\theta^{|u|}}(u\omega), \quad \omega\in\Sigma.$$
 With the usual distance given on the shift, denote by ${\cH}(\Sigma)$ the space of H\"older continuous functions on $\Sigma$ and by $\cC(\Sigma)$ the space of continuous functions on $\Sigma$. Recall that $\lambda_{u,\omega}=\int L_u(\mathbf 1)d\mu_\omega$ as in  Proposition \ref{prop:conformal}. Note that $\log  \lambda_{i,\cdot}\in\cH(\Sigma)$ by Proposition \ref{prop:conformal}. Suppose that $\log p_i\in\cH(\Sigma)$ as well.
Define a linear operator $\iota$ acting on $\cC(\Sigma)$ by
\[ \iota(g)(\omega) := \sum_{i\in \cW}  \lambda_{i,\omega}  p_i(\omega)  g(i\omega),\quad g\in \cC(\Sigma). \]
As  $u \mapsto p_u$ and $u \mapsto  \lambda_{u,\omega}$  are multiplicative cocycles with respect to $\theta$, it can be shown that for every $k\in\mathbb N$
$$ \iota^k(g)(\omega) =\sum_{u\in\cW^k}\lambda_{u,\omega}p_u(\omega) g(u\omega).$$
In view of the duality with $\theta$,  we  have that for any $g_1, g_2\in\cC(\Sigma)$
\begin{equation}\label{eq:iotadual}
 \int \iota^k(g_1)\cdot g_2 d\rho= \int \lambda_{[\omega]_k, \theta^k\omega}\cdot g_1\cdot g_2\circ \theta^k d\rho.
 \end{equation}

Since $\log  \lambda_{i,\omega}$ and $\log p_i$ are both H\"older continuous, Ruelle's Perron-Frobenius theorem implies that there are $\beta>0, m\in\mathcal M_1(\Sigma)$ and $g_o\in \cC(\Sigma), g_o>0$ such that 
\begin{equation}\label{eq:measurem}
\iota^*m=\beta m,\quad  \iota(g_o)=\beta g_o,\quad  m(g_o)=1.
\end{equation}

Furthermore there exists $t \in (0,1)$ such that for any $g\in \cH(\Sigma)$ and $k\in\mathbb N$
\begin{equation}\label{eq:iotadecay}
  \left\| \beta^{-k}\iota^k(g) -  g_o\int g \, dm \right\|_{\scriptscriptstyle \Sigma} \ll t^k \|g\|_{\scriptscriptstyle \Sigma} 
  \end{equation}
where $\|\cdot\|_{\scriptscriptstyle \Sigma}=D_{\scriptscriptstyle \Sigma}(\cdot)+\|\cdot\|_\infty$, the sum of the H\"older norm and the supremum norm over the shift. Note that $g_o$ is uniformly bounded from above and away from $0$ as $\Sigma$ is compact.
\begin{remark}
If $(i,\omega) \mapsto \lambda_{i,\omega}$ is constant, then $m=\rho$. Moreover if $\rho$ is invariant then $g_o=1$. If $\rho$ is a Bernoulli measure
then $\cA_k=(\cA_1)^k$ for every $k\ge 1$.
In this case annealed transfer operators were studied in \cite{Baladi}.
Note that $\cA_l \circ \cA_k = \cA_{l+k}$ if and only if $\rho$ is Bernoulli.
Averaged transfer operators were also considered in \cite{CRV17} in the special case that $\rho$ is a Bernoulli measure and all
potentials $\varphi_i$ are equal.
\end{remark}

\begin{remark}
The associated skew product 
\[F: X \times \Sigma \to X \times \Sigma, \quad (x, i_1 i_2 \ldots) \mapsto  (T_{i_1}(x), i_2i_3 \ldots)\]
reflects the time evolution along a given path in $\Sigma$ with a distribution on the space of possible paths, that is, the probability of the event of applying $T \in \mathcal{S}$ in time $n$   is given by 
$\rho(\{ \omega \in \Sigma : F^n(\, \cdot \,,\omega) =(T(\,\cdot\,), \theta^n(\omega)) \})$.
\end{remark}

We proceed to prove that the family $\{\mathcal A_n\}$ has exponential decay of correlations.
 Fix $k_0\in\mathbb N$ and $s\in (0,1)$ as given in Theorem \ref{theorem:contraction}. With $m$ defined as in \eqref{eq:measurem}, let $\pi\in\mathcal M_1(X)$ be given by $$d\pi : = d\mu_\omega dm(\omega).$$ For $f\in\mathcal H_\alpha$, let $$\|f\|_m:=\|\mu_{\cdot}(|f|)\|_\infty$$ being the supremum norm with respect to $m$ of the map $\omega\mapsto \mu_\omega(|f|)$ over the shift. 
 
\begin{theorem}\label{theo:annealed-conformal} Suppose the Ruelle-expanding semigroup $\mathcal{S}$ is jointly topologically mixing and finitely aperiodic, and that every potential $\varphi_i$ is $\alpha$-H\"older and summable. Suppose that every $\log p_i$, $i\in\cW$  is H\"older continuous on $\Sigma$. Then there exists $r\in(0,1)$ such that for all $f \in \mathcal{H}_\alpha$ and $n \geq 2k_0$
\[  \left| \frac{\mathcal{A}_n(f)(x)}{\mathcal{A}_n(\mathbf{1})(x)}  - \int f d\pi \right| \ll r^n (\overline D(f) +\|f\|_m).\]
Moreover, there exists a positive function $h\in\cH_\alpha$  such that for all $f\in\cH_\alpha$ and $n\ge 2k_0$,
$$\left|\frac{\mathcal A_n(f)(x)}{\beta^{n} h(x)}-\int f d\pi\right|\ll r^n(\overline D(f)+\|f\|_m),$$
with $\beta>0$ given by \eqref{eq:measurem}.
\end{theorem}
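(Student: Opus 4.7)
The plan is to strip away the dependence on $f$ through the spectral theory developed earlier, reduce to an integral involving only $\psi_n(\omega) := L_{[\omega]_n}(\mathbf 1)(x)$, and then to bring the spectral decay of $\iota$ into play via the duality \eqref{eq:iotadual}.

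First, I would use Proposition \ref{prop:conformal}(ii) to replace $\mathbb P^{[\omega]_n}(f)(x)$ by its $n\to\infty$ limit. Writing $F(\omega):=\int f\,d\mu_\omega$, this yields
\[
\mathcal A_n(f)(x) \;=\; \int \psi_n F\,d\rho \;+\; O\bigl(s^n\,\overline D(f)\bigr)\,\mathcal A_n(\mathbf 1)(x),
\]
and Proposition \ref{prop:conformal}(i) combined with $|F(\omega)|\le\|f\|_m$ gives $\|F\|_\Sigma \ll \overline D(f)+\|f\|_m$. A second application of Proposition \ref{prop:conformal}(ii), this time to $L_{[\omega]_j}(\mathbf 1)$, whose $d^\ast$-Lipschitz norm is controlled by its supremum thanks to \eqref{eq:bounded_distortion} and Lemma \ref{comparability}, yields the cocycle approximation
\[
\psi_n(\omega) \;=\; \lambda_{[\omega]_j,\theta^j\omega}\,\psi_{n-j}(\theta^j\omega)\,\bigl(1+O(s^{n-j})\bigr), \qquad n-j\ge k_0.
\]
Substituting this into $\int\psi_n F\,d\rho$ and invoking \eqref{eq:iotadual}, I obtain
\[
\int\psi_n F\,d\rho \;=\; \int \iota^j(F)\,\psi_{n-j}\,d\rho \;+\; O\bigl(s^{n-j}\|F\|_\infty\bigr)\mathcal A_n(\mathbf 1)(x).
\]

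Next, I would feed in the spectral gap \eqref{eq:iotadecay}, expanding $\iota^j(F) = \beta^j g_o\!\int\!F\,dm + O(\beta^j t^j\|F\|_\Sigma)$, to obtain
\[
\int\psi_n F\,d\rho = \beta^j\!\int\! F\,dm\!\int\! g_o\,\psi_{n-j}\,d\rho + O\bigl(\beta^j t^j\|F\|_\Sigma\bigr)\mathcal A_{n-j}(\mathbf 1)(x) + O\bigl(s^{n-j}\|F\|_\infty\bigr)\mathcal A_n(\mathbf 1)(x).
\]
To convert these error prefactors into powers of $\beta^n$, I would first establish $\mathcal A_n(\mathbf 1)(x)\asymp\beta^n$ uniformly in $x$: combining Lemma \ref{comparability} (which yields $\psi_n(\omega)\asymp \lambda_{[\omega]_n,\theta^n\omega}$) with the duality \eqref{eq:iotadual} at $g_1=g_2=\mathbf 1$ and the limit $\beta^{-n}\iota^n(\mathbf 1)\to g_o$ from \eqref{eq:iotadecay} gives $\mathcal A_n(\mathbf 1)(x) \asymp \int \iota^n(\mathbf 1)\,d\rho \asymp \beta^n$. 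The same expansion with $F\equiv\mathbf 1$ produces a parallel formula for $\mathcal A_n(\mathbf 1)(x)$. Taking the ratio, then choosing $j=\lfloor n/2\rfloor$ and setting $r:=\max(s,t)^{1/2}$, I arrive at
\[
\frac{\mathcal A_n(f)(x)}{\mathcal A_n(\mathbf 1)(x)} \;=\; \int F\,dm + O(r^n)(\overline D(f)+\|f\|_m),
\]
which is the first claim since $\int F\,dm = \int f\,d\pi$.

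For the ``moreover'' part, I would define $h(x)$ as the limit of $J_m(x) := \beta^{-m}\!\int g_o\,\psi_m\,d\rho$. Applying the cocycle--duality identity with $F=g_o$ and using the exact eigenrelation $\iota(g_o)=\beta g_o$ (which kills the $O(t^j)$ term), one obtains $J_m(x)-J_{m-k_0}(x) = O(s^{m-k_0})$, so $\{J_m(x)\}_{m}$ is geometrically Cauchy; set $h(x):=\lim_m J_m(x)$. Positivity of $h$ follows from $\mathcal A_m(\mathbf 1)(x)\asymp\beta^m$, and uniform H\"older bounds on the $J_m$, inherited from Lemma \ref{comparability} and the distortion estimate \eqref{eq:bounded_distortion}, pass to the limit so that $h\in\cH_\alpha$. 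Combining with the expansion of $\mathcal A_n(\mathbf 1)(x)$ at $j=\lfloor n/2\rfloor$ yields $\mathcal A_n(\mathbf 1)(x)/(\beta^n h(x)) = 1+O(r^n)$, and multiplying this by the first claim (using $|\int f\,d\pi|\le\|f\|_m$) gives the second estimate.

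The main technical obstacle is balancing $j$: the cocycle error scales as $s^{n-j}$ while the spectral-gap error for $\iota^j$ scales as $t^j$, and both must be converted into relative errors against the baseline $\mathcal A_n(\mathbf 1)(x) \asymp \beta^n$. Taking $j\sim n/2$ resolves the trade-off at rate $r=\max(s,t)^{1/2}$, and the two-sided comparison $\mathcal A_n(\mathbf 1)(x)\asymp \beta^n$ is what turns the additive errors into genuinely exponentially small relative ones.
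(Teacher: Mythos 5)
Your proposal is correct and follows essentially the same route as the paper: replace $L_{[\omega]_n}(f)$ by $\mu_\omega(f)L_{[\omega]_n}(\mathbf 1)$ via Proposition \ref{prop:conformal}, split $n$ in half and use the cocycle estimate together with the duality \eqref{eq:iotadual} and the spectral gap \eqref{eq:iotadecay} for $\iota$, and define $h$ as the limit of $\beta^{-m}\int g_o\,L_{[\omega]_m}(\mathbf 1)(x)\,d\rho(\omega)$, exactly as in the paper's $\tilde I_m$. The only differences are cosmetic reorganizations (establishing $\mathcal A_n(\mathbf 1)(x)\asymp\beta^n$ up front to normalize errors, and a step-$k_0$ telescoping to get the geometric rate for $J_m$ directly), which do not change the substance of the argument.
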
 

\begin{proof} In the first step of the proof, we derive the first decay. 
Proposition \ref{prop:conformal} implies that for any $n\geq 2k_0, \omega\in\Sigma$ and $x\in X, f \in \mathcal{H}_\alpha$,
\[ \left| L_{[\omega]_n}(f)(x) -  \mu_{\omega}(f)  L_{[\omega]_n}(\mathbf{1})(x)  \right|   \ll s^n  \overline{D}(f)  L_{[\omega]_n}(\mathbf{1})(x) .\]
After integration, it yields that 
\begin{equation} \label{eq:estimate-0}  \left| \mathcal{A}_n(f)(x)  - \int \mu_{\omega}(f) L_{[\omega]_n}(\mathbf{1})(x) d\rho(\omega) \right|  \ll s^n \overline{D}(f) \mathcal{A}_n(\mathbf{1})(x).  \end{equation}
It remains to analyse  $\int \mu_{\omega}(f) L_{[\omega]_n}(\mathbf{1}) d\rho(\omega)$ as $n \to \infty$.
In order to do so, write $n=k+l$ with $l=[n/2]+1$.  Observe that by \eqref{eq:estimate-lambda} 
\begin{equation} \label{eq:estimate-nkl}
\left|L_{[\omega]_n}(\mathbf{1}) - \lambda_{[\omega]_k, \theta^k\omega} L_{[\theta^k\omega]_l}(\mathbf{1}) \right| \ll s^l \lambda_{[\omega]_k, \theta^k\omega} L_{[\theta^k\omega]_l}(\mathbf{1}).
\end{equation}
Note that it follows from  Proposition \ref{prop:conformal} that
 $\omega \mapsto \mu_{\omega}(f)$ is H\"older continuous on $\Sigma$ and its H\"older coefficient is bounded by a constant times $\overline{D}(f)$.
Hence
\begin{align*} 
& \left| \int \mu_{\omega}(f) L_{[\omega]_n}(\mathbf{1}) d\rho(\omega) - \int \mu_{\omega}(f) \lambda_{[\omega]_k,\theta^k \omega} L_{[\theta^k\omega]_l}(\mathbf{1}) d\rho(\omega)\right| \\
 \ll &  s^l \int  \mu_{\omega}(|f|) \lambda_{[\omega]_k, \theta^k \omega} L_{[\theta^k \omega]_l}(\mathbf{1})   d\rho(\omega)\\
 \overset{\eqref{eq:iotadual}}{=}&s^l\int \iota^k (\mu_\omega(|f|))\cdot  L_{[\omega]_l}(\mathbf 1) d\rho(\omega)\\
 =&s^l\int \left(\beta^{-k}g_o^{-1}\iota^k (\mu_\omega(|f|))-\pi(|f|)+\pi(|f|)\right)\cdot \iota^k(g_o) L_{[\omega]_l}(\mathbf 1) d\rho(\omega)\\
\overset{\eqref{eq:iotadecay}}{\ll} &s^l(t^k(\overline D(f) + \|f\|_m)+\pi(|f|))\int  \iota^k(g_o)  L_{[\omega]_l}(\mathbf{1}) d\rho(\omega)\\
\overset{\eqref{eq:iotadual}}=&s^l(t^k(\overline D(f) + \|f\|_m)+\pi(|f|)) \int g_o \cdot \lambda_{[\omega]_k, \theta^k\omega} L_{[\theta^k\omega]_l}(\mathbf{1}) d\rho(\omega) \\
\overset{\eqref{eq:estimate-nkl}} \ll&  s^l  (t^k(\overline D(f) + \|f\|_m)+\pi(|f|))\int  L_{[\omega]_n}(\mathbf{1}) \cdot g_o d\rho(\omega) \\
\ll& s^l  (t^k\overline D(f)+\|f\|_m) \mathcal{A}_{n}(\mathbf{1}).
\end{align*} 
Observe that in the previous estimate we have also showed that 
\begin{equation}\label{eq:qo}
\int \iota^k(g_o)L_{[\omega]_l}(\mathbf 1)d\rho(\omega)\ll \mathcal A_n(\mathbf 1).
\end{equation}
Then one can extract $\pi(f)$ by
\begin{align*} 
& \left|  \int  \mu_{\omega}(f) \lambda_{[\omega]_k, \theta^k\omega} L_{[\theta^k\omega]_l}(\mathbf{1})  d\rho(\omega)-  \pi(f) \int \lambda_{[\omega]_k, \theta^k\omega} L_{[\theta^k\omega]_l}(\mathbf{1})  d\rho(\omega) \right| \\
\overset{\eqref{eq:iotadual}}=  &  \left| \int \iota^k(\mu_\omega (f))  L_{[\omega]_l}(\mathbf{1}) d\rho(\omega) -  \pi(f) \int \iota^k(1)  L_{[\omega]_l}(\mathbf{1}) d\rho(\omega)\right|\\ 
=& \left|\int \left((\beta^{-k}g_o^{-1}\iota^k(\mu_\omega(f))-\pi(f))-(\beta^{-k}g_o^{-1}\iota^k(1)-1)\pi(f)\right)\iota^k(g_o) L_{[\omega]_l}(\mathbf 1)d\rho(\omega)\right|\\
\overset{\eqref{eq:iotadecay}}\ll  & t^k(\overline D(f) + \|f\|_m) \int  \iota^k(g_o)  L_{[\omega]_l}(\mathbf{1}) d\rho(\omega) \ll  t^k(\overline D(f) + \|f\|_m) \mathcal{A}_{n}(\mathbf{1}).
\end{align*}
Finally \eqref{eq:estimate-nkl} induces that
\begin{equation*}
 \left| \pi(f)\int \lambda_{[\omega]_k, \theta^k\omega} L_{[\theta^k\omega]_l}(\mathbf{1})  d\rho(\omega)  -   \pi(f)  \mathcal{A}_{n}(\mathbf{1})  \right| 
 \ll   s^l  |\pi(f)|  \mathcal{A}_{n}(\mathbf{1}). 
\end{equation*}
Combining the above estimates, one obtains that 
$$\left|\int \mu_\omega(f)L_{[\omega]_n}(\mathbf 1)d\rho(\omega)-\pi(f)\mathcal A_n(\mathbf 1)\right|\ll (t^k\overline D(f)+t^k\|f\|_m+s^l\|f\|_m)\mathcal A_n(\mathbf 1).$$
The first statement now follows from \eqref{eq:estimate-0} with $r=\max\{\sqrt s, \sqrt[3]t\}$.

We now proceed with proving the existence of $h$. In order to do so, let
$$\tilde \cA_n(x):=\int L_{[\omega]_n}(\mathbf 1)(x)\cdot g_o(\om) \,d\rho(\omega).$$ 
We first show that $\tilde I_n(x):=\beta^{-n}\tilde \cA_n(x)$ converges uniformly and exponentially fast to a positive function $h(x)\in \cH_\alpha$.

It follows from \eqref{eq:estimate-nkl} that for any $n=k+l$ with $l\ge k_0$,
 $$L_{[\omega]_n}(\mathbf 1)\asymp \lambda_{[\omega]_k, \theta^k\omega} L_{[\theta^k\omega]_{l}}(\mathbf{1}),$$ 
so that
$$
\tilde \cA_n\asymp \int \lambda_{[\omega]_k, \theta^k\omega} L_{[\theta^k\omega]_{l}}(\mathbf{1})\cdot g_o d\rho \overset{\eqref{eq:iotadual}}=\int \iota^k(g_o) L_{[\omega]_{l}}(\mathbf 1) d\rho=\beta^k \tilde \cA_l,$$
hence $\tilde I_n\asymp \tilde I_l$, especially $\tilde I_n\asymp \tilde I_{k_0}$ for all $n\ge k_0$. Since \eqref{eq:estimate-nkl} also implies that  
\begin{align*}
|\tilde \cA_n- \beta^k \tilde \cA_l|\ll s^{l} \beta^k\tilde \cA_l,
\end{align*}
one has $$|\tilde I_n-\tilde I_l|\ll s^l \tilde I_l.$$ Hence, $\{\tilde I_n(\cdot)\}$ is a Cauchy sequence. 
Denote the limit of $\tilde I_n(x)$ by $h(x)$. $\tilde I_n(x)$ converges uniformly to $h(x)$ since for $n\ge l\ge k_0$
$$|\tilde I_n-\tilde I_l|\ll s^l\tilde I_{k_0}\ll s^l.$$ Then because $\tilde I_n$'s are all H\"older, $h$ is H\"older as well. That $h$ is positive and $\|h\|_\infty$ is finite can be seen from $h\asymp\tilde I_{k_0}$. To see that the rate of convergence is exponential, for $n\geq k_0$ choose $j\in\mathbb N$ such that $|\tilde I_{jn}- h|\le s^n,$ then 
$$|\tilde I_n -h|\le |\tilde I_n-\tilde I_{2n}|+\cdots +|\tilde I_{(j-1)n}- \tilde I_{jn}|+|\tilde I_{jn}-h|\ll s^n.$$ 
Moreover,  Lemma \ref{comparability} infers that $\inf_{x\in X} \tilde I_{k_0}(x)>0$ and so are $\tilde I_n$ for $n\ge k_0$ and so is $h$. It follows that $\tilde I_n/h$ converges to $1$ uniformly and exponentially fast.

Next we show that $I_n(x):=\beta^{-n}\cA_n(\mathbf 1)(x)$ also tends to $h(x)$. 
For $n=k+l$ with $l\ge k_0$, because 
$$\left|\cA_n(\mathbf 1)-\int \iota^k(1)L_{[\omega]_l}(\mathbf 1) d\rho\right|\ll s^l\int \iota^k(1)L_{[\omega]_l}(\mathbf 1) d\rho$$
obtained from integrating \eqref{eq:estimate-nkl} and because 
\begin{align*}\left|\int (\iota^k(1)-\iota^k(g_o))L_{[\omega]_l}(\mathbf 1)d\rho\right|&=\left|\int (\beta^{-k}g_o^{-1}\iota^k(1)-1)\iota^k(g_o)L_{[\omega]_l} d\rho\right|\\
&\overset{\eqref{eq:iotadecay}}\le t^k\int \iota^k(g_o)L_{[\omega]_l} d\rho = t^k\beta^k \tilde A_l,
\end{align*}
one can deduce that 
$$|\cA_n(\mathbf 1)-\beta^k\tilde \cA_l|\ll (s^l+t^k)\beta^k\tilde A_l,$$
hence $$|{I_n}- \tilde I_l|\ll (s^l+t^k)\tilde I_l.$$
So that 
$$|I_n-h|\ll (s^l+t^k) h.$$
Lastly, applying Theorem \ref{theo:annealed-conformal}, one has that for all $f\in\cH_\alpha$ and $n\ge 2k_0$
\begin{align*}
|\beta^{-n}\cA_n(f)-\pi(f)h|&\le \beta^{-n}|\cA_n(f)-\pi(f)\cA_n(\mathbf 1)|+ \pi(f)|\beta^{-n}\cA_n(\mathbf 1)- h|\\
&\ll r^n(\overline D(f) +\|f\|_m) I_n+ \pi(f)|I_n-h|\\
&\ll r^n(\overline D(f)+ \|f\|_m) h.
\end{align*}
The second assertion on the decay  follows from this.
\end{proof}

The next result reveals an annealed version of decay of correlations.  

\begin{theorem} 
\label{theo:annealed decay of correlations}
Now suppose that the assumptions of the above theorem hold and that, in addition, $\rho$ is $\theta$-invariant.  Then there exist a probability measure $\tilde{\pi}$ on $\Sigma\times X$, $r \in (0,1)$ and $k_1 \in \mathbb{N}$ such that
\begin{align*} 
&\left| \int \sum_{v \in \cW^n} \mathbf{1}_{[v]}(\omega) f (T_v(x)) g(x) d\mu_\omega(x) d\rho(\omega) - 
\int f d\tilde{\pi} \int g d\mu_\omega d\rho \right| \\
& \leq   r^n \int |f| d\mu_\omega d\rho \left(  \overline D(g) + \int |g| d\mu_\omega d\rho \right)
\end{align*}
for all $g \in \cH_\alpha$ and $f: X \to \mathbb{R}$ integrable with respect to $d\mu_\omega(x) d\rho(\omega)$.
\end{theorem}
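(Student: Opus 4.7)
The plan combines three ingredients: the conformal duality $L_v^\ast\mu_{\omega'}=\lambda_{v,\omega'}\mu_{v\omega'}$ of Proposition~\ref{prop:conformal}(iv), the quenched exponential contraction of Proposition~\ref{prop:conformal}(ii), and the decay of correlations of $\rho$ on the subshift (available since $\rho$ is $\theta$-invariant and the hypothesis on $d\rho/d\rho\circ\theta$ makes $\rho$ a Gibbs measure on a topologically mixing subshift of finite type). Observe first that $\sum_{v\in\cW^n}\mathbf{1}_{[v]}(\omega)$ simply selects $v=[\omega]_n$, so the quantity to estimate is $I_n(f,g):=\int f(T_{[\omega]_n}x)g(x)d\mu_\omega(x)d\rho(\omega)$. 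The identity $L_v(g\cdot f\circ T_v)=f\cdot L_v(g)$ combined with conformality yields
\[\int f(T_{[\omega]_n}x)\,g(x)\,d\mu_\omega(x)=\lambda_{[\omega]_n,\theta^n\omega}^{-1}\int f\cdot L_{[\omega]_n}(g)\,d\mu_{\theta^n\omega},\]
and replacing $L_{[\omega]_n}(g)$ by $\mu_\omega(g)L_{[\omega]_n}(\mathbf{1})+O(s^n\overline{D}(g)L_{[\omega]_n}(\mathbf{1}))$ via Proposition~\ref{prop:conformal}(ii) reduces $I_n(f,g)$ to
\[J_n:=\int\mu_\omega(g)\,\mu_\omega(f\circ T_{[\omega]_n})\,d\rho(\omega)\]
up to an error of order $s^n\overline{D}(g)\int|f|d\mu_\omega d\rho$; here the uniform comparability $h_{v,\omega'}\asymp 1$ of Lemma~\ref{comparability} together with $\theta$-invariance of $\rho$ is used to bound the auxiliary quantity $I_n(|f|,\mathbf{1})$ by a constant multiple of $\int|f|d\mu_\omega d\rho$.

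The second and most delicate step decouples the past $[\omega]_n$ from the future $\theta^n\omega$ inside $\mu_\omega(f\circ T_{[\omega]_n})$. I split $n=k+l$ with $l\asymp n/2$, set $u=[\omega]_k$, $\omega'=\theta^k\omega$, and use the push-forward identity $T_{u\ast}\mu_\omega=\mu_{u,\omega'}$ of Proposition~\ref{prop:conformal}(iii) together with $T_{[\omega]_n}=T_{[\omega']_l}\circ T_u$ to rewrite
\[\mu_\omega(f\circ T_{[\omega]_n})=\int(f\circ T_{[\omega']_l})\,h_{u,\omega'}\,d\mu_{\omega'}.\]
Since $\int h_{u,\omega'}d\mu_{\omega'}=1$, the difference from $\tilde H_l(\omega'):=\mu_{\omega'}(f\circ T_{[\omega']_l})$ equals $\int(f\circ T_{[\omega']_l})(h_{u,\omega'}-\mathbf{1})d\mu_{\omega'}$. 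Dualising once more through $L_{[\omega']_l}$, I apply the quenched decay of Proposition~\ref{prop:conformal}(ii) to the \emph{density} $h_{u,\omega'}-\mathbf{1}$ rather than to $f\circ T_{[\omega']_l}$; this exploits the fact that $\overline{D}(h_{u,\omega'})$ is bounded uniformly in $u,\omega'$ (by Lemma~\ref{comparability}) and produces
\[\mu_\omega(f\circ T_{[\omega]_n})=\tilde H_l(\theta^k\omega)+O\bigl(s^l\,\mu_{\theta^k\omega}(|f|\circ T_{[\theta^k\omega]_l})\bigr).\]

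Substituted into $J_n$, the integrand takes the form $F(\omega)(G\circ\theta^k)(\omega)$ with $F(\omega)=\mu_\omega(g)$ (Lipschitz in the $d^\ast$-metric on $\Sigma$, with coefficient controlled by $\overline{D}(g)$ via Proposition~\ref{prop:conformal}(i)) and $G=\tilde H_l\in L^1(\rho)$ of norm bounded by $I_l(|f|,\mathbf{1})\ll\int|f|d\mu_\omega d\rho$. The spectral gap of $\rho$'s transfer operator then gives
\[\Bigl|\int F\,(G\circ\theta^k)\,d\rho-\int F\,d\rho\,\int G\,d\rho\Bigr|\ll r_0^k\,\overline{D}(g)\int|f|d\mu_\omega d\rho\]
for some $r_0\in(0,1)$. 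The factor $\int Fd\rho=\int gd\mu_\omega d\rho$ is the desired average, while $\int Gd\rho=I_l(f,\mathbf{1})$ is Cauchy with exponential rate $s^l$ (the same splitting applied with $g=\mathbf{1}$, combined with $\theta$-invariance of $\rho$) and so converges to a value which one identifies with $\int fd\tilde\pi$, taking $\tilde\pi$ to be the limit of $(\Phi^n)_\ast\tilde\mu$ for the skew product $\Phi(\omega,x)=(\theta\omega,T_{\omega_1}x)$ with $\tilde\mu:=d\mu_\omega(x)d\rho(\omega)$ on $\Sigma\times X$. Balancing $k=l=n/2$, handling the decoupling remainder from the second step by one further application of the $\rho$-decay (to the H\"older function $\mu_\omega(|g|)$ in place of $\mu_\omega(g)$, which contributes the $\int|g|d\mu_\omega d\rho$ term), and setting $r:=\max\{s^{1/2},r_0^{1/2}\}$ produce the stated bound. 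The main obstacle is the decoupling step: applying the quenched contraction directly to $f\circ T_{[\omega']_l}$ is hopeless, since the expansion of $T_{[\omega']_l}$ makes $\overline{D}(f\circ T_{[\omega']_l})$ grow geometrically with $l$; circumventing this by pushing the contraction onto the uniformly regular density $h_{u,\omega'}$ is the key manoeuvre that makes the whole argument go through.
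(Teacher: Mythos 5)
Your proof is correct and follows the same architecture as the paper's: your step 1 (conformal duality plus Proposition \ref{prop:conformal}(ii) to extract $\mu_\omega(g)$, with Lemma \ref{comparability} and $\theta$-invariance controlling the error term) and your step 3 (the spectral gap of $\rho$ on $\Sigma$ applied to the H\"older function $\omega\mapsto\mu_\omega(g)$ against an $L^1(\rho)$ factor) are exactly the paper's moves. You differ in two local implementations. For the decoupling of the first $k$ symbols, the paper replaces $h_{n,\omega}$ by $h_{m,\theta^m\omega}$ via Proposition \ref{prop:equilibrium}(iv), while you re-derive the equivalent estimate by dualising through $L_{[\theta^k\omega]_l}$ and applying Proposition \ref{prop:conformal}(ii) to the zero-mean density $h_{[\omega]_k,\theta^k\omega}-\mathbf{1}$; this is a clean, self-contained variant, but note that the uniform bound on $\overline{D}(h_{u,\omega'})$ it needs is really \eqref{eq:estimate-lambda}, i.e.\ Lemma \ref{lem:doeblin-fortet} combined with Lemma \ref{comparability}, not Lemma \ref{comparability} alone. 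For the limiting object, the paper passes to the natural extension and obtains the explicit formula $d\tilde\pi=d\mu_{\tilde\omega,\omega}\,d\hat\rho(\tilde\omega,\omega)$ (exploited again in the remark following the theorem), whereas you define $\tilde\pi$ abstractly as a limit of push-forwards of $d\mu_\omega\,d\rho$ under the skew product; that existence claim deserves one more line --- for instance, your Cauchy estimate applied to indicator functions, together with the uniform domination $\int\mu_\omega(\mathbf{1}_A\circ T_{[\omega]_n})\,d\rho\ll\int\mathbf{1}_A\,d\mu_\omega\,d\rho$, yields total-variation convergence of the $X$-marginals and absolute continuity of the limit with respect to $d\mu_\omega\,d\rho$, which is all the statement requires and also identifies your $\tilde\pi$ with the paper's. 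The remaining bookkeeping --- bounding $\|g\|_\infty\le\overline{D}(g)+\int|g|\,d\mu_\omega\,d\rho$ in the decoupling remainder, balancing $k=l$, and absorbing multiplicative constants by enlarging $r$ for $n\ge k_1$ --- matches the paper's.
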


\begin{proof} For $\omega = (\omega_1 \omega_2 \ldots) \in \Sigma$, set $\lambda_{n,\omega} := \lambda_{\omega_1 \ldots \omega_n, \theta^n\omega}$ and $h_{n,\omega} := h_{\omega_1 \ldots \omega_n, \theta^n\omega}$, where $\lambda_\cdot$ and $h_\cdot$ are given by Proposition \ref{prop:conformal}. Moreover, Proposition \ref{prop:conformal} and Lemma \ref{comparability} imply for $n$ sufficiently large that  
\begin{align}
\nonumber 
& \int \sum_{v \in \cW^n} \mathbf{1}_{[v]}  f \circ T_v  g  d\mu_\omega  d\rho =  \int \sum_{v \in \cW^n} \mathbf{1}_{[v]}  f   \frac{L_v(g)}{\lambda_{n,\omega}}    d\mu_{\theta^n\omega}  d\rho  \\
\nonumber
= \;&  \int \sum_{v} \mathbf{1}_{[v]}  f  
\mu_\omega(g) \frac{L_v(\mathbf{1})}{\lambda_{n,\omega}}    d\mu_{\theta^n\omega}  d\rho \pm 2 s^n \overline D(g)  \int \sum_{v} \mathbf{1}_{[v]}  |f| \frac{L_v(\mathbf{1})}{\lambda_{n,\omega}}  d\mu_{\theta^n\omega} d\rho\\
\nonumber
 = \;&  \int \sum_{v \in \cW^n} \mathbf{1}_{[v]}  f  \mu_\omega(g) h_{n,\omega}  d\mu_{\theta^n\omega}  d\rho \pm C s^n \overline D(g)  \int \sum_{v \in \cW^n} \mathbf{1}_{[v]}  |f|  d\mu_{\theta^n\omega}  d\rho \\
 \label{eq:step 1 in annealed decay} 
 = \;& \int  f  \mu_\omega(g) h_{n,\omega}  d\mu_{\theta^n\omega}  d\rho \pm C s^n \overline D(g)  \int |f|  d\mu_{\omega}  d\rho, 
\end{align}
where $C/2$ is given by Lemma \ref{comparability}, and the last equality follows from $\theta$-invariance of $\rho$. Now assume that $n$ is even and  $n = 2m$. Then, by \ref{propitem:continnuity of h} of Proposition \ref{prop:equilibrium} 
there exists $C$ such that  
\begin{align}
\nonumber 
& \int  f  \mu_\omega(g) h_{n,\omega}  d\mu_{\theta^n\omega}  d\rho \\
\nonumber 
= \;&  \int  f  \mu_\omega(g) h_{m,\theta^m\omega}  d\mu_{\theta^n\omega}  d\rho \pm C s^m \int \mu_{\theta^n\omega}{(|f|)}  |\mu_\omega(g)|  d\rho 
\end{align}  
However, as $\omega \to \mu_\omega(g)$ is Lipschitz continuous by Proposition \ref{prop:conformal}, the exponential decay of correlations, say with rate $t \in (0,1)$ and the same constant $C>0$, applied to the error term implies that 
\begin{align} 
\nonumber 
&  \int  f  \mu_\omega(g) h_{m,\theta^m\omega}  d\mu_{\theta^n\omega}  d\rho \pm C s^m \int \mu_{\theta^n\omega}{(|f|)}  |\mu_\omega(g)|  d\rho \\
\label{eq:step 2 in annealed decay} 
= \; &  \int  f  \mu_\omega(g) h_{m,\theta^m\omega}  d\mu_{\theta^n\omega}  d\rho \pm C^2 s^m \int  \mu_{\omega}(|f|) d\rho \int \mu_\omega(|g|)   d\rho
\end{align}  
A further application of invariance and the exponential decay of correlations of $\theta$ to the main term and Lemma \ref{comparability}   gives that 
 \begin{align}
\nonumber 
& \int  f  \mu_\omega(g) h_{m,\omega}  d\mu_{\theta^n\omega}  d\rho = \int  \mu_\omega(g) \mu_{\theta^{2m}\omega} ( f \,h_{m,\theta^m\omega}  )  d\rho  \\
\label{eq:step 3 in annealed decay} 
= \;&  \int  \mu_\omega(g)d\rho \int f  h_{m,\omega} d\mu_{\theta^{m}\omega} d\rho
\pm C^2 t^m  \int  \mu_{\omega}(|f|) d\rho  \overline D(g)
\end{align}  
It hence remains to analyse $\int f  h_{m,\omega} d\mu_{\theta^{m}\omega}$. In order to do so, let $(\hat\Sigma,\hat\theta,\hat\rho)$ refer to natural extension of $\theta$. Then, again by  \ref{propitem:continnuity of h} of Proposition \ref{prop:equilibrium}, it follows that 
 \begin{align}
\nonumber 
&  \int f  h_{m,\omega} d\mu_{\theta^{m}\omega} d\rho(\omega)  
 = \int f  h_{m,\omega} d\mu_{{\theta}^{m}\omega} d\hat\rho (\tilde{\omega},\omega)\\
\nonumber
  = \;&
  \int f  h_{\tilde{\omega}_{-m}\cdots \tilde{\omega}_{-1},\omega} d\mu_{\omega} d\hat\rho (\tilde{\omega},\omega) 
 =   \int f  h_{\tilde{\omega},\omega} d\mu_{\omega} d\hat\rho (\tilde{\omega},\omega)
  \pm Cs^m \int  \mu_{\omega}(|f|) d\rho \\
\label{eq:step 4 in annealed decay}   
  = \;& \int f   d\mu_{\tilde{\omega},\omega} d\hat\rho (\tilde{\omega},\omega)
  \pm Cs^m \int  \mu_{\omega}(|f|) d\rho.
\end{align}
Let $d\tilde{\pi}(x) := d\mu_{\tilde{\omega},\omega}(x) d\hat\rho (\tilde{\omega},\omega)$. The
 theorem now  follows by combining \eqref{eq:step 1 in annealed decay}, \eqref{eq:step 2 in annealed decay}, \eqref{eq:step 3 in annealed decay} and \eqref{eq:step 4 in annealed decay}.
\end{proof}

\begin{remark} As a corollary of the proof, we also obtain  an explicit representation of $\tilde{\pi}$. That is, $d\tilde{\pi}(x) := d\mu_{\tilde{\omega},\omega}(x) d\hat\rho (\tilde{\omega},\omega)$ where $\hat\rho$ is the natural extension of $\rho$
(which is assumed invariant). 
In particular, $d\tilde{\pi}$ and  $d\mu_{\omega} d\rho(\omega)$ are equivalent measures, even though $d\tilde{\pi}/d\mu_{\omega} d\rho(\omega)$ might be a function depending on $\omega$. However, it is not clear if $\tilde{\pi}$ and $\pi$ coincide. Furthermore, this representation reveals that in our sequential setting, the measure arising in the annealed version of decay of correlations is   
 an integral of the path-wise equilibrium measures, as known for the special case where $\rho$ is a Bernoulli measure.  
\end{remark}

\section{An almost sure invariance principle}
Exponential decay has many implications on the statistical behaviour of the dynamical system. 
For sequential dynamical systems of expanding maps of the interval, first versions of central limit theorems were obtained by Heinrich and Conze \& Raugi (\cite{Heinrich:1996,ConzeRaugi:2007}). We now show an almost sure invariance principle in the setting of  Ruelle expanding maps.
It is worth mentioning that almost sure invariance principles have been obtained in the context of
quenched random dynamical systems (see e.g. \cite{Davor} and references therein).
 Let $\mathcal B$ be the Borel $\sigma$-algebra on $X$. 
With respect to the measure $\mu_{uv\omega}$, where $u,v $ are finite words and $\omega$ is an infinite word, $\mathbb P_u^v$ can be seen as a conditional expectation in the following way.
\begin{lemma}\label{lem:condexptn} For any $f\in\cH_\alpha$ $$\mathbb E_{\mu_{uv\omega}}(f\circ T_u|T^{-1}_{uv}\mathcal B)=\mathbb P_u^v(f)\circ T_{uv}.$$
\end{lemma}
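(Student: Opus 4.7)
The plan is to verify the two defining properties of conditional expectation directly from the conformality relations established in Proposition~\ref{prop:conformal} together with the pushforward identity $L_w((g \circ T_w)\cdot h) = g \cdot L_w(h)$ that is built into the definition of the Ruelle operator.

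First, I would observe that the proposed right-hand side $\mathbb{P}_u^v(f) \circ T_{uv}$ is manifestly $T_{uv}^{-1}\mathcal{B}$-measurable, since $\mathbb{P}_u^v(f) \in \mathcal{H}_\alpha$ by Lemma~\ref{lem:doeblin-fortet}. Hence the only thing to check is the integration property: for every bounded Borel $g: X \to \mathbb{R}$,
\[
\int (g\circ T_{uv})\,(f\circ T_u)\, d\mu_{uv\omega}
= \int (g\circ T_{uv})\,(\mathbb{P}_u^v(f)\circ T_{uv})\, d\mu_{uv\omega}.
\]

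For the key step, I would use the relation $L_{uv}^\ast(\mu_\omega) = \lambda_{uv,\omega}\mu_{uv\omega}$ from Proposition~\ref{prop:conformal}(iv) to lift both integrals to integrals against the reference measure $\mu_\omega$. On the left-hand side,
\[
\lambda_{uv,\omega}\!\int (g\circ T_{uv})(f\circ T_u)\, d\mu_{uv\omega}
= \int L_{uv}\bigl((g\circ T_{uv})(f\circ T_u)\bigr)\, d\mu_\omega
= \int g \cdot L_{uv}(f\circ T_u)\, d\mu_\omega,
\]
where the final equality uses that $g\circ T_{uv}$ comes out of $L_{uv}$ as a factor. Now the definition of $\mathbb{P}_u^v$ rewrites $L_{uv}(f\circ T_u) = \mathbb{P}_u^v(f)\cdot L_{uv}(\mathbf{1})$, so the left-hand side equals $\lambda_{uv,\omega}^{-1}\int g\cdot \mathbb{P}_u^v(f) \cdot L_{uv}(\mathbf{1})\, d\mu_\omega$. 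Applying the same push-down to the right-hand side,
\[
\lambda_{uv,\omega}\!\int (g\circ T_{uv})(\mathbb{P}_u^v(f)\circ T_{uv})\, d\mu_{uv\omega}
= \int L_{uv}\bigl((g\cdot \mathbb{P}_u^v(f))\circ T_{uv}\bigr)\, d\mu_\omega
= \int g\cdot \mathbb{P}_u^v(f)\cdot L_{uv}(\mathbf{1})\, d\mu_\omega,
\]
which matches the left-hand side. Dividing by $\lambda_{uv,\omega}>0$ gives the desired identity, and the conditional expectation characterization is complete.

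There is no genuine obstacle here: the statement is essentially a bookkeeping identity expressing that $\mathbb{P}_u^v$ implements, up to the conformal factor $\lambda_{uv,\omega}$, the dual transport along $T_{uv}$ on the $\sigma$-algebra $T_{uv}^{-1}\mathcal{B}$. The only subtlety worth flagging is that one must keep track of which reference measure is being used on each side of the chain $\mu_\omega \leftrightarrow \mu_{uv\omega}$; this is handled cleanly by invoking Proposition~\ref{prop:conformal}(iv) in the form $L_{uv}^\ast \mu_\omega = \lambda_{uv,\omega}\mu_{uv\omega}$, after which the computation is symmetric in $f\circ T_u$ and $\mathbb{P}_u^v(f)\circ T_{uv}$.
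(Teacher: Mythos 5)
Your argument is correct: verifying the two defining properties of conditional expectation, lifting both integrals to the reference measure $\mu_\omega$ via $L_{uv}^\ast(\mu_\omega)=\lambda_{uv,\omega}\mu_{uv\omega}$, pulling $g\circ T_{uv}$ out of $L_{uv}$, and using $L_{uv}(f\circ T_u)=\mathbb{P}_u^v(f)\,L_{uv}(\mathbf{1})$ does give the identity after dividing by $\lambda_{uv,\omega}>0$. The route is genuinely different in its bookkeeping from the paper's: the paper never invokes the eigenvalue relation of Proposition~\ref{prop:conformal}(iv), but instead chains the pushforward and duality identities of Proposition~\ref{prop:conformal}(iii), namely $\mu_{uv\omega}\circ T_u^{-1}=\mu_{u,v\omega}={\mathbb{P}_u^v}^\ast(\mu_{uv,\omega})$ and $\mu_{uv,\omega}=\mu_{uv\omega}\circ T_{uv}^{-1}$, so that for $A\in\mathcal{B}$ one passes from $\int_{T_{uv}^{-1}A}f\circ T_u\,d\mu_{uv\omega}$ through $\int\mathbb{P}_u^v(\mathbf{1}_A\circ T_v\cdot f)\,d\mu_{uv,\omega}$ back to $\int_{T_{uv}^{-1}A}\mathbb{P}_u^v(f)\circ T_{uv}\,d\mu_{uv\omega}$. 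Your version is slightly more self-contained and algebraic: it needs only the single conformality relation for $\mu_\omega$ plus the definition of $\mathbb{P}_u^v$, and it avoids juggling the two-index measures; the paper's version, by contrast, displays the lemma as the exact dual statement to the transport relations among $\mu_{u,v\omega}$, $\mu_{uv,\omega}$, $\mu_{uv\omega}$ that are reused elsewhere (e.g.\ in the proof of Theorem~\ref{thm:asip}). One small point, shared equally by the paper's computation, is that the integrands $(g\circ T_{uv})(f\circ T_u)$ with $g$ bounded Borel (or $\mathbf{1}_A\circ T_v\cdot f$) are not H\"older, so strictly speaking one should either note that the measure identities of Proposition~\ref{prop:conformal}, once established, extend to bounded Borel integrands by a standard monotone class or regularity argument, or first take $g$ continuous and then pass to indicators; this is routine and not a gap.
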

\begin{proof}
For any $A\in\mathcal B$, using (iii) of Proposition \ref{prop:conformal}, 
\begin{align*}
\int_{T^{-1}_{uv}A} f\circ T_u d\mu_{uv\omega}&=\int \mathbf 1_A\circ T_v\cdot f d\mu_{uv\omega}\circ T^{-1}_u=\int \mathbf 1_A\circ T_v\cdot f d\mu_{u, v\omega}\\
&=\int \mathbf 1_A\circ T_v\cdot f d{\mathbb P_u^v}^\ast(\mu_{uv,\omega})=\int \mathbb P_u^v(\mathbf 1_A\circ T_v\cdot f) d\mu_{uv,\omega}\\
&=\int \mathbf 1_A\cdot \mathbb P_u^v(f)d\mu_{uv, \omega}=\int_A \mathbb P_u^v(f) d\mu_{uv\omega}\circ T^{-1}_{uv}\\
&=\int_{T_{uv}^{-1}A}\mathbb P_u^v(f)\circ T_{uv}d\mu_{uv\omega}.
\end{align*}
\end{proof}

The almost sure invariance principle we are going to show is similar to the one in \cite{StadlbauerZhang:2017a} for non-stationary shift. Both are based on the almost sure invariance principle for reverse martingale differences by Cuny and Merlev\`ede.

\begin{theorem}[{\cite[Theorem 2.3]{CunyMerlevede:2015}}]\label{thm:rvsmtg}
Let $(U_n)_{n\in\mathbb N}$ be a sequence of square integrable reverse martingale differences with respect to a non-increasing filtration $(\mathcal G_n)_{n\in\mathbb N}$. Assume that $\sigma_n^2:=\sum_{k=1}^n\mathbb E(U_k^2)\to\infty$ and that $\sup_n\mathbb E(U_n^2)<\infty$. Assume that
\begin{align*}
&\sum_{k=1}^n\left(\mathbb E(U_k^2|\mathcal G_{k+1})-\mathbb E(U^2_k)\right)=o(\sigma_n^2) \qquad {a.s.}\\
&\sum_{n\geq 1}\sigma_n^{-2t}\mathbb E(|U_n|^{2t})<\infty \qquad \text{for some } 1\leq t\leq 2.\label{eq:sumvar}
\end{align*}
Then, enlarging our probability space if necessary, it is possible to find a sequence $(Z_k)_{k\geq 1}$ of independent centred Gaussian variables with $\mathbb E(Z_k^2)=\mathbb E(U_k^2)$ such that
$$\sup_{1\leq k\leq n}|\sum_{i=1}^k U_i-\sum_{i=1}^k Z_i|=o(\sqrt{\sigma_n^2\log\log \sigma_n^2})\qquad {a.s.}$$
\end{theorem}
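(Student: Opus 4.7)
The plan is to follow the Strassen--Heyde strategy of Skorokhod-embedding a martingale into a Brownian motion and then exploiting the Brownian law of the iterated logarithm. The reverse-martingale structure is handled by a finite-horizon time-reversal: for each fixed $N$, the sequence $\tilde U_k := U_{N-k+1}$ is a square-integrable \emph{forward} martingale difference with respect to the increasing filtration $\tilde{\mathcal G}_k := \mathcal G_{N-k+1}$, since $\mathbb E(\tilde U_k \mid \tilde{\mathcal G}_{k-1}) = \mathbb E(U_{N-k+1}\mid \mathcal G_{N-k+2}) = 0$. Hence all the classical machinery for forward martingales becomes applicable to each truncation, and the ASIP must then be stitched together as $N\to \infty$.

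The first technical step is the Skorokhod--Dubins embedding in its enlarged-probability-space form: construct on a suitable extension a standard Brownian motion $(B_t)_{t\ge 0}$ and nondecreasing stopping times $(\tau_k)$ in its filtration $(\mathcal H_k)$ so that $\bigl(\sum_{i=1}^k U_i\bigr)_{k}$ and $(B_{\tau_k})_k$ have the same finite-dimensional law, with the crucial identities $\mathbb E(\tau_k - \tau_{k-1}\mid \mathcal H_{k-1}) = \mathbb E(U_k^2 \mid \mathcal G_{k+1})$ and a Burkholder-type bound $\mathbb E(|\tau_k - \tau_{k-1}|^t \mid \mathcal H_{k-1}) \ll \mathbb E(|U_k|^{2t}\mid \mathcal G_{k+1})$. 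The first hypothesis of the theorem then yields $\sum_{k=1}^n\mathbb E(\tau_k - \tau_{k-1}\mid \mathcal H_{k-1}) = \sigma_n^2 + o(\sigma_n^2)$ a.s., while the $L^{2t}$ summability hypothesis, combined with Chow's almost sure convergence theorem for martingales applied to the compensated process $\tau_k - \tau_{k-1} - \mathbb E(\tau_k - \tau_{k-1}\mid\mathcal H_{k-1})$, converts this into the pathwise estimate $\tau_n = \sigma_n^2 + o(\sigma_n^2)$ almost surely. It is here that the higher-moment parameter $t \in [1,2]$ plays its role, since for $t=1$ alone the Borel--Cantelli input may be insufficient.

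Once $\tau_n$ is within $o(\sigma_n^2)$ of $\sigma_n^2$ a.s., Lévy's modulus of continuity (equivalently the Brownian LIL) controls the oscillation of $B$ over a window of length $o(\sigma_n^2)$ inside $[0, 2\sigma_n^2]$ by $o(\sqrt{\sigma_n^2\log\log\sigma_n^2})$ a.s., and a Doob-type maximal inequality upgrades this to a supremum over $1\le k\le n$, delivering the claimed rate for $\sup_{k\le n}\bigl|\sum_{i=1}^k U_i - B_{\sigma_k^2}\bigr|$. The independent centred Gaussians required by the theorem are then simply $Z_k := B_{\sigma_k^2} - B_{\sigma_{k-1}^2}$; they have $\mathbb E(Z_k^2) = \sigma_k^2 - \sigma_{k-1}^2 = \mathbb E(U_k^2)$ by construction, and independence follows from the disjointness of the underlying time-intervals of $B$.

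The principal obstacle is that the Skorokhod embedding is naturally adapted to the \emph{forward} Brownian filtration, whereas $(U_n)$ lives on a decreasing filtration $(\mathcal G_n)$. The cleanest remedy is to carry out the embedding on each reversed truncation $(\tilde U_k)_{k\le N}$, verify that the couplings produced for different horizons $N$ are consistent (after identifying the canonical processes), and then pass to a Kolmogorov projective limit on the enlarged space to produce a single Brownian motion $B$ coupling all partial sums at once. The uniform bound $\sup_n \mathbb E(U_n^2) < \infty$ together with the $L^{2t}$ summability hypothesis prevent the embedding times $\tau_k$ from blowing up and hence guarantee tightness of the couplings, which is what makes this projective-limit step go through.
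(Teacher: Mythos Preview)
The paper does not prove this theorem: it is quoted verbatim from \cite[Theorem 2.3]{CunyMerlevede:2015} and invoked as a black box in the proof of Theorem~\ref{thm:asip}. There is therefore no ``paper's own proof'' to compare your proposal against.

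As for your outline itself, the Strassen--Skorokhod strategy is indeed the classical route to an ASIP with LIL rate for \emph{forward} martingale differences, and your description of the embedding step, the identification $\mathbb E(\tau_k-\tau_{k-1}\mid\mathcal H_{k-1})=\mathbb E(U_k^2\mid\mathcal G_{k+1})$, the use of Chow's theorem with the $L^{2t}$ hypothesis, and the extraction of the Gaussians $Z_k=B_{\sigma_k^2}-B_{\sigma_{k-1}^2}$ are all correct in spirit. The genuine gap is the passage from finite-horizon reversals to a single coupling. For each fixed $N$ the reversed block $(\tilde U_1,\ldots,\tilde U_N)=(U_N,\ldots,U_1)$ is a forward martingale-difference array and admits a Skorokhod embedding into some Brownian motion $B^{(N)}$ with stopping times $\tau^{(N)}_1\le\cdots\le\tau^{(N)}_N$; but the embedding for horizon $N+1$ begins by embedding $U_{N+1}$ first, so $B^{(N+1)}$ and its stopping times bear no canonical relation to $B^{(N)}$---the whole sequence of times shifts and the Brownian paths are rebuilt from scratch. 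There is no consistent projective system here, so Kolmogorov's theorem does not apply, and ``tightness of the couplings'' does not rescue this: tightness would at best give subsequential weak limits, not an almost-sure statement on a single enlarged space. Cuny and Merlev\`ede avoid this obstruction entirely; their argument does not pass through Skorokhod embedding but through a direct conditional Gaussian approximation adapted to the reverse filtration. If you want to push the embedding idea through, you would need a version of the Skorokhod representation that is itself adapted to a decreasing filtration, which is a nontrivial modification and not the ``finite reversal plus projective limit'' you sketch.
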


We need to make another assumption.
\begin{definition} A $(a,\lambda)$-Ruelle-expanding map $T$ is\emph{finitely expanding} if 
$$\sup_{\substack{x,y\in X \\ 0<d(x,y)<a}}\frac{d(T(x), T(y))}{d(x,y)}<\infty.$$
We refer to $\mathcal S$ as finitely Ruelle-expanding if every $T_i, i\in \cW$ satisfies this property.
\end{definition}

\begin{theorem} \label{thm:asip}
Suppose the finitely Ruelle-expanding semigroup $\mathcal{S}$ is jointly topologically mixing and finitely aperiodic, and that every potential $\varphi_i$ is $\alpha$-H\"older and summable. 
Suppose $\omega\in\Sigma$, $f\in\cH_\alpha$. Let $f_n=f -\int f\circ T_{[\omega]_n} d\mu_{\omega}$ for every $n\in\mathbb N_0$ and let $s_n^2 = \mathbb E_{\mu_\omega}(\sum_{k=0}^{n-1} f_k\circ T_{[\omega]_k})^2$ for $n\ge 1$. Assume that 
$$\quad \sum_n s_n^{-4}<\infty.$$ Then, enlarging our probability space if necessary, there exists a sequence $(Z_n)$ of independent centred Gaussian random variables such that 
\begin{gather*}
\sup_n\left|\sqrt{\textstyle \sum_{k=0}^{n-1} \mathbb E_{\mu_\omega} Z_k^2}-s_n\right|<\infty,\\
\sup_{0\leq k \leq n-1} \left| \textstyle\sum_{i=0}^k f_i\circ T_{[\omega]_i} - \sum_{i=0}^k Z_i \right| = o(\sqrt{s^2_n \log \log s^2_n})\quad \mu_\omega{\rm-a.s.}.
\end{gather*}
\end{theorem}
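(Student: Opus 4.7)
The plan is to reduce to Cuny--Merlev\`ede's Theorem \ref{thm:rvsmtg} via a Gordin-style reverse martingale approximation, in which the exponential contraction of the quotient operators from Theorem \ref{theorem:contraction} replaces the spectral gap usually provided by a normalised transfer operator in the stationary setting. Set $H_k := f_k \circ T_{[\omega]_k}$ and let $\mathcal{G}_n := T_{[\omega]_n}^{-1}\mathcal{B}$, which is a decreasing filtration since $T_{[\omega]_{n+1}}=T_{\omega_{n+1}}\circ T_{[\omega]_n}$. By the choice of $f_k$ each $H_k$ is centred under $\mu_\omega$, and Lemma \ref{lem:condexptn} applied with $u=[\omega]_k$ and $v=\omega_{k+1}\cdots\omega_{k+j}$ (so that $uv\cdot\theta^{k+j}\omega=\omega$) yields the explicit identification
\[
\mathbb{E}_{\mu_\omega}[H_k\mid \mathcal{G}_{k+j}] = F_{k,j}\circ T_{[\omega]_{k+j}}, \qquad F_{k,j}:=\mathbb{P}_{[\omega]_k}^{\omega_{k+1}\cdots\omega_{k+j}}(f_k).
\]
Since $\int f_k\,d(\mu_\omega\circ T_{[\omega]_k}^{-1})=\int H_k\,d\mu_\omega=0$ by Proposition \ref{prop:conformal}(iii), the estimate in Proposition \ref{prop:conformal}(ii) gives $\|F_{k,j}\|_\infty\le 2s^j\overline{D}(f)$ for $j\ge k_0$, while the trivial bound $\|F_{k,j}\|_\infty\le 2\|f\|_\infty$ handles small $j$.

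Telescoping $H_k=\sum_{j\ge 0}(F_{k,j}\circ T_{[\omega]_{k+j}}-F_{k,j+1}\circ T_{[\omega]_{k+j+1}})$ and exchanging the order of summation rewrites $S_N:=\sum_{k=0}^{N-1}H_k = \sum_{n=0}^{N-1}U_n+R_N$ with
\[
U_n :=\sum_{k=0}^{n}\Bigl(F_{k,n-k}\circ T_{[\omega]_n}-\mathbb{P}_{[\omega]_n}^{\omega_{n+1}}(F_{k,n-k})\circ T_{[\omega]_{n+1}}\Bigr),\quad R_N:=\sum_{k=0}^{N-1}F_{k,N-k}\circ T_{[\omega]_N}.
\]
By construction each $U_n$ is $\mathcal{G}_n$-measurable with $\mathbb{E}_{\mu_\omega}[U_n\mid \mathcal{G}_{n+1}]=0$, so $(U_n)$ is a reverse martingale difference sequence, and the geometric-type decay above ensures $\sum_{k=0}^n\|F_{k,n-k}\|_\infty$ is uniformly bounded in $n$, so that both $\|U_n\|_\infty$ and $\|R_N\|_\infty$ are controlled by a constant depending only on $f$ and $\mathcal{S}$.

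The four hypotheses of Theorem \ref{thm:rvsmtg} are then verified as follows. Orthogonality of the $U_n$ together with the uniform bound on $R_N$ and Cauchy--Schwarz yield $\sigma_n^2:=\sum_{k=1}^n\mathbb{E}_{\mu_\omega}(U_k^2)=s_n^2+O(s_n)$, so $\sigma_n^2\to\infty$ and $\sigma_n^2\asymp s_n^2$; the hypothesis $\sum_n s_n^{-4}<\infty$ combined with $\|U_n\|_\infty$ bounded provides $\sum_n \sigma_n^{-4}\mathbb{E}_{\mu_\omega}(|U_n|^4)<\infty$, which is the required Lindeberg-type condition with $t=2$, while $\sup_n\mathbb{E}_{\mu_\omega}(U_n^2)<\infty$ is immediate. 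The delicate remaining condition is the almost-sure variance convergence
\[
\sum_{k=1}^n\bigl(\mathbb{E}_{\mu_\omega}[U_k^2\mid \mathcal{G}_{k+1}]-\mathbb{E}_{\mu_\omega}(U_k^2)\bigr)=o(\sigma_n^2)\quad \mu_\omega\text{-a.s.},
\]
which I expect to be the main obstacle. My plan is to adapt the scheme in \cite{StadlbauerZhang:2017a}: write $U_k^2=G_k\circ T_{[\omega]_k}$ with $G_k$ uniformly bounded in $\cH_\alpha$, where the \emph{finitely expanding} hypothesis is used precisely to prevent the H\"older seminorms of $G_k$ from blowing up under composition of the intermediate dynamics; a further application of Lemma \ref{lem:condexptn} identifies $\mathbb{E}_{\mu_\omega}[U_k^2\mid \mathcal{G}_{k+1}]$ with a $\mathbb{P}$-image of $G_k$, so that Theorem \ref{theorem:contraction} produces exponential decay in $|k-\ell|$ of the covariances $\operatorname{Cov}_{\mu_\omega}(\mathbb{E}[U_k^2\mid \mathcal{G}_{k+1}],\mathbb{E}[U_\ell^2\mid \mathcal{G}_{\ell+1}])$, and the G\'al--Koksma strong law combined with $\sum_n s_n^{-4}<\infty$ upgrades this to the required $o(\sigma_n^2)$ pointwise bound. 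Theorem \ref{thm:rvsmtg} then supplies independent centred Gaussians $(Z_k)$ with $\mathbb{E}(Z_k^2)=\mathbb{E}(U_k^2)$ satisfying the martingale ASIP at rate $o(\sqrt{\sigma_n^2\log\log \sigma_n^2})$; absorbing the bounded remainder $R_N$ and using $|\sigma_n-s_n|=O(1)$ transfers the conclusion to $\sum_{i=0}^k f_i\circ T_{[\omega]_i}$ with the advertised rate $o(\sqrt{s_n^2\log\log s_n^2})$ and the companion estimate $\sup_n|\sqrt{\sum_{k=0}^{n-1}\mathbb{E}Z_k^2}-s_n|<\infty$.
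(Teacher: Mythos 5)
Your proposal is correct and follows essentially the same route as the paper: your telescoped $U_n$ is exactly the paper's reverse martingale difference $f_n\circ T_{[\omega]_n}+h_n\circ T_{[\omega]_n}-h_{n+1}\circ T_{[\omega]_{n+1}}$ with $h_n=\sum_{k=0}^{n-1}\mathbb{P}_{[\omega]_k}^{[\theta^k\omega]_{n-k}}f_k$, and your verification of the Cuny--Merlev\`ede hypotheses (uniform bounds from the contraction estimates, the finitely expanding hypothesis used only to keep the H\"older seminorm of $u_k^2$ bounded under composition with a generator, and Theorem \ref{theorem:contraction} plus Lemma \ref{lem:condexptn} giving exponential covariance decay) matches the paper's argument step for step. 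The only divergence is the final almost-sure variance condition, where you invoke a G\'al--Koksma strong law while the paper runs an explicit Chebyshev--Borel--Cantelli argument along a subsequence with $\sigma_{k_n}^2\asymp n^2$ followed by a monotone sandwich; both routes rest on the same block $L^2$ estimate $\mathbb{E}\bigl(\sum_k(\mathbb{E}(U_k^2\mid\mathcal{G}_{k+1})-\mathbb{E}U_k^2)\bigr)^2\ll\sum_k\mathbb{E}U_k^2$ and on $\sigma_n\to\infty$ (which already follows from $\sum_n s_n^{-4}<\infty$), so your alternative is legitimate.
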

\begin{proof}
Denote $\mathcal B_n=T_{[\omega]_n}^{-1}\mathcal B$ for $n\in\mathbb N$ and let $\mathcal B_0=\mathcal B$, then $\mathcal B_n$ is a non-increasing filtration. Let $h_0=0$ and define $h_n\in \mathcal{H}_\alpha$ recursively by $h_{n+1}=\mathbb P_{[\omega]_n}^{[\theta^n\omega]_1}(f_n+h_n)$. Then \eqref{eq:iteration_1} implies that $h_n=\sum_{k=0}^{n-1}\mathbb P_{[\omega]_k}^{[\theta^k\omega]_{n-k}}f_k\in\cH_\alpha$. It follows from Proposition \ref{prop:conformal} that $\mu_\omega\circ T^{-1}_{[\omega]_k}=\mu_{[\omega]_k, \theta^k\omega}$, then
$$\mathbb P_{[\omega]_k}^{[\theta^k\omega]_{n-k}}f_k=\mathbb P_{[\omega]_k}^{[\theta^k\omega]_{n-k}}f-\int f\circ T_{[\omega]_k}d\mu_\omega=\mathbb P_{[\omega]_k}^{[\theta^k\omega]_{n-k}}f-\int f d\mu_{[\omega]_k,\theta^k\omega}$$
and that, with $k_0\in\mathbb N$ and $s\in(0,1)$ given by Theorem \ref{theorem:contraction}
\begin{align*}
\|h_n\|&\le\sum_{k=0}^{n-k_0}2s^{n-k}\overline D(f)+\sum_{k=n-k_0+1}^{n-1}\|\mathbb P_{[\omega]_k}^{[\theta^k\omega]_{n-k}}f_k\|\\
&\le \sum_{k=0}^{n-k_0}2s^{n-k}\overline D(f)+\sum_{k=n-k_0+1}^{n-1}C\|f\|\ll \|f\|,
\end{align*}
where $C$ is a uniform bound for all $\|\mathbb P_u^v\|$ (Lemma \ref{lem:doeblin-fortet}).

Let $$U_n:=f_n\circ T_{[\omega]_n}+h_n\circ T_{[\omega]_n}-h_{n+1}\circ T_{[\omega]_{n+1}}.$$ $U_n$ is $\mathcal B_n$-measurable and square integrable. Moreover, apply Lemma \ref{lem:condexptn} to get that
\begin{align*}\mathbb E_{\mu_\omega}(U_n|\mathcal B_{n+1})=\mathbb P_{[\omega]_n}^{[\theta^n\omega]_1} f_n\circ T_{[\omega]_{n+1}}+\mathbb P_{[\omega]_n}^{[\theta^n\omega]_1} h_n\circ T_{[\omega]_{n+1}}-h_{n+1}\circ T_{[\omega]_{n+1}}=0.
\end{align*} 
So $(U_n)_{n\in\mathbb N_0}$ is a sequence of square integrable reverse martingale differences. Let $$\sigma_n^2:=\sum_{k=0}^{n-1}\mathbb E_{\mu_\omega}U_k^2=\mathbb E_{\mu_\omega}\left(\sum_{k=0}^{n-1}U_k\right)^2.$$ We check the conditions of Theorem \ref{thm:rvsmtg}. $\mathbb E$ in the rest of the proof stands for $\mathbb E_{\mu_\omega}$.

First we show $\sigma_n^2\to\infty$ and $\sup_n\mathbb EU_n^2<\infty$. It follows from 
\begin{align*}
|\sigma_n-s_n|&=\left|\mathbb E^{1/2}(\sum_{k=0}^{n-1}U_k)^2-\mathbb E^{1/2}(\sum_{k=0}^{n-1}f_k\circ T_{[\omega]_k})^2\right|\\
&\leq \mathbb E^{1/2}(\sum_{k=0}^{n-1}U_k-\sum_{k=0}^{n-1}f_k\circ T_0^k)^2=\mathbb E^{1/2}(h_n\circ T_{[\omega]_n})^2\\
&\ll \|f\|
\end{align*}
that $|\sigma_n-s_n|$ is uniformly bounded. So $s_n^2\to\infty$ implies that $\sigma_n^2\to\infty$. Since $\|U_n\|_\infty$ is uniformly bounded, $\sup_n\mathbb E U_n^2<\infty$.

Next we show that $$\sum_{k=0}^{n-1}\left(\mathbb E(U_k^2|\mathcal B_{k+1})-\mathbb E(U^2_k)\right)=o(\sigma^2_n) \qquad \mathbb \mu_\omega\text{-a.s.}$$
Let $u_n=f_n+h_n-h_{n+1}\circ T_{[\theta^n \omega]_1}$ and let $\tilde u_n=u_n^2-\mathbb E U_n^2$ . Then $\|\tilde u_n\|_\infty\ll \|f\|^2.$ Moreover, the H\"older coefficient of $\tilde u_n$ is also uniformly bounded because, denoting $[\theta^{n-1}\omega]_1=i\in\cW$,
\begin{align*}
D_\alpha(h_{n}\circ T_i)&=\sup_{x\neq y\in X}\frac{|h_{n}\circ T_i (x)-h_{n}\circ T_i(y)|}{d(x, y)^\alpha}\\
&\le  D_\alpha(h_n)\cdot \sup_{0<d(x,y)<a}\left(\frac{d(T_i(x), T_i(y))}{d(x,y)}\right)^\alpha+2a^{-\alpha}\|h_n\|_\infty
\end{align*}
which is uniformly bounded by assumption.
Let $$F_n =\sigma_n^{-2}\sum_{k=0}^{n-1}\mathbb E(U_k^2|\mathcal B_{k+1}),$$ 
then $$\sum_{k=0}^{n-1}\left(\mathbb E(U_k^2|\mathcal B_{k+1})-\mathbb E(U_k^2)\right)=\sum_{k=0}^{n-1}\mathbb P_{[\omega]_k}^{[\theta^k\omega]_1} \tilde u_{k}\circ T_{[\omega]_{k+1}}=\sigma_n^2(F_n-1).$$ 
Applying Proposition \ref{prop:conformal}, we have 
\begin{align*}
\mathbb E\left(\sum_{k=0}^{n-1} \mathbb P_{[\omega]_k}^{[\theta^k\omega]_1}\tilde u_k\circ T_{[\omega]_{k+1}}\right)^2
&\ll \sum_{0\leq k\leq l\leq n-1}\mathbb E \left(\mathbb P_{[\omega]_k}^{[\theta^k\omega]_1}\tilde u_k\circ T_{[\omega]_{k+1}}\cdot \mathbb P_{[\omega]_l}^{[\theta^l\omega]_1}\tilde u_l\circ T_{[\omega]_{l+1}}\right)\\
&=\sum_{0\leq k\leq l\leq n-1}\int \mathbb P_{[\omega]_k}^{[\theta^k\omega]_{l-k+1}}\tilde u_k\cdot \mathbb P_{[\omega]_l}^{[\theta^l\omega]_1}\tilde u_l ~ d\mu_{[\omega]_{l+1},\theta^{l+1}\omega}\\
&\ll \sum_{l-k+1\ge k_0}s^{l-k+1} \overline D\tilde u_k\cdot \mathbb EU_l^2 +\sum_{l-k+1<k_0}\| \tilde u_k\|_\infty\cdot \mathbb EU_l^2\\
&\ll k_0\cdot \sum_{l=0}^{k_0-2} \mathbb EU_l^2+(s^{k_0}+k_0)\cdot \sum_{l=k_0-1}^{n-1}\mathbb EU_l^2
\end{align*}
where in the last inequality we have used that $\|\tilde u_k\|$ is uniformly bounded.
Therefore
$$\mathbb E(F_n-1)^2=\sigma_n^{-4}\mathbb E\left(\sum_{k=0}^{n-1} \mathbb P_{[\omega]_k}^{[\theta^k\omega]_1} \tilde u_{k}\circ T_{[\omega]_{k+1}}\right)^2\ll \sigma_n^{-4}\sum_{l=0}^{n-1}\mathbb E U_l^2= \sigma_n^{-2}.$$
As $\sigma_n\to\infty$, $\mathbb E(F_n-1)^2\to 0$. We need to show that it is almost sure convergence. Let $ C = \sup_n \mathbb E U^2_n$ and  let $k_n=\inf \{k: \sigma_k^2\geq n^2 C\}.$
Then $k_n<\infty, k_n\to\infty$ and $$n^2C\leq\sigma_{k_n}^2\leq(n^2+1)C.$$
Since $$\sum_n \mathbb E(F_{k_n}-1)^2\ll\sum_n\sigma_{k_n}^{-2}<\infty,$$ $F_{k_n}\to 1$ a.s. by the Borel-Cantelli lemma.  Let $m=m(n)\to\infty$ be such that $k_{m}\leq n\leq k_{m+1}$, then 
$$ F_{k_m}\frac{m^2}{(m+1)^2+1}\leq F_{k_m}\frac{\sigma^2_{k_m}}{\sigma^2_{k_{m+1}}}\leq F_n\leq  F_{k_{m+1}}\frac{\sigma^2_{k_{m+1}}}{\sigma_{k_{m}}^2}\leq F_{k_{m+1}}\frac{(m+1)^2+1}{m^2}.$$
Hence, $F_n\to 1$ a.s.
Lastly, $\sum_{n}\sigma_n^{-2}\mathbb E U_n^{2}<\infty$ because $\|U_n\|_\infty$ is uniformly bounded, $|\sigma_n-s_n|\ll \|f\|$ and $\sum_n s_n^{-4}<\infty$ by assumption.

Now we can use Theorem \ref{thm:rvsmtg} to find a sequence of independent centred Gaussian variables $\{Z_k\}$ with $\mathbb EZ_k^2=\mathbb EU_k^2$ such that 
$$\sup_{0\leq k\leq n-1}\left|\sum_{i=0}^k U_i-\sum_{i=0}^k Z_i\right|=o\left(\sqrt{\sigma^2_n \log\log \sigma^2_n}\right)\qquad \text{a.s.}$$
Since $|\sum_{i=0}^{k} f_i\circ T_{[\omega]_i}-\sum_{i=0}^{k} U_i|$ and $|\sigma_n-s_n|$ are both uniformly bounded, the statement of the theorem  follows.
\end{proof}

\section{Applications} \label{sec:app} 

In this section we illustrate some possible applications of our main results, both for conformal iterated function systems and the thermodynamic formalism of 
free semigroup actions by expanding maps.

\subsection{Non-autonomous conformal iterated function systems} The class of  
non-autonomous conformal iterated function system was introduced and studied in \cite{Rempe-GillenUrbanski:2016}, and is defined as follows.

\begin{definition} \label{def:NCIFS} 
We refer to $\{X,(\Phi_i:1\leq i \leq k)\}$ as a \emph{non-autonomous conformal iterated function system} if 
$X$ is a convex, compact subset of $\mathbb{R}^d$, for some $d \in\mathbb{N}$, 
with  $\overline{\hbox{int}(X)} =X$, and $(\Phi_i)$ 
is a collection $\{ \varphi_{i,1},\ldots,\varphi_{i,k(i)}\}$ of maps from $X$ to $X$
such that
\begin{enumerate}
\item \label{number:conformal} the following \emph{conformality condition} holds:  there exists an open connected set $V \supset X$ such that each $\varphi_{i,j}$ extends to a continuously differentiable conformal diffeomorphism from $V$ into $V$,
\item the \emph{open set condition} holds: $\varphi_{i,j}(\hbox{int}(X))  \cap \varphi_{i,\tilde{j}}(\hbox{int}(X)) = \emptyset$, for all $1 \leq j < \tilde{j} \leq k(i)$ and $i= 1,\ldots k$,
\item \label{number:hoelder} the following conditions on \emph{bounded distortion and uniform contraction} hold: there exist constants $K \geq	1$ and $\eta \in (0,1)$ such that for any  $n \in\mathbb{N}$ and any choice 
$(i_1,j_1),\ldots, (i_n,j_n)$, with $i_l \in \{1,\ldots, k\}$  and $1\leq j_l \leq k(l)$ and all $x,y \in X$, for 
$\varphi:= \varphi_{i_n,j_n} \circ \cdots \circ \varphi_{i_1,j_1}$, we have that 
\[ \|D \varphi(x)\| \leq K \|D \varphi(y)\|, \quad \|D \varphi(x)\| \leq K \eta^n.   \]
\end{enumerate}
\end{definition}
As $X$ is assumed to be compact and the $k(i) < \infty$ for all $i= 1,\ldots k$, it follows for any compact set $A \subset K$ that $\Phi_i(A):= \cup_{j=1}^{k(i)} \varphi_{i,j}(A)$ is compact. Hence, for a given $\omega \in \Sigma$, where $\Sigma = \{(\omega_1\omega_2\ldots):  1 \leq \omega_i \leq k\}$,  $(\Phi_{\omega_1} \circ   \cdots \circ  \Phi_{\omega_n} (X))_n$ is a decreasing sequence of compact sets which then implies that the \emph{limit set} $J_\omega$, defined by 
 \[J_\omega := \lim_{n \to \infty}  \Phi_{\omega_1} \circ \Phi_{\omega_2} \circ \cdots \circ  \Phi_{\omega_n} (X) \]   
is non-empty and compact. 

We now derive an averaged version of Bowen's formula in order to have access to the Hausdorff dimension of these limit sets. In order to do so, we have to adapt the semigroup setting to the IFS. First observe that \eqref{number:conformal} in Definition \ref{def:NCIFS} implies that $\varphi:= \varphi_{i_n,j_n} \circ \cdots \circ \varphi_{i_1,j_1}$ is a well-defined  conformal diffeomorphism, for any $n \in\mathbb{N}$ and $(i_1,j_1),\ldots, (i_n,j_n)$, with $i_l \in \{1,\ldots, k\}$  and $1\leq j_l \leq k(l)$. Furthermore, by \eqref{number:hoelder}, $\varphi$ is a contraction with rate $K\eta^n$ and, by a  standard argument, $x \mapsto \log \|D \varphi(x)\|$ is Lipschitz continuous with respect to a uniform constant.

For $\delta \geq 0$, we now consider the operators, for $w=(\omega_1 \ldots \omega_n)$,   
\begin{align*} L^\delta_{\omega_i}(f) &:= \sum_{j=1}^{k(\omega_i)} \|D\varphi_{\omega_i,j} (\,\cdot\,)\|^\delta f\circ\varphi_{\omega_i,j}, \\
L^\delta_{w}(f) &:= \sum_{j_1, \ldots, j_n} \|D(\varphi_{\omega_1,j_1} \cdots \varphi_{\omega_n,j_n}) (\,\cdot\,)\|^\delta f \circ \varphi_{\omega_1,j_1} \cdots \varphi_{\omega_n,j_n} \\
&\phantom{:}= L^\delta_{\omega_1} \circ L^\delta_{\omega_2} \circ \cdots \circ L^\delta_{\omega_n} (f).  
\end{align*} 
for $f$ in a suitable function space (the last equality follows from conformality). Now assume that $\rho$ is a probability measure on $\Sigma$  which satisfies the conditions of Theorem \ref{theo:annealed-conformal}, that is $\log d\rho/d\rho\circ\sigma$ is H\"older continuous and the support of  $\rho$ is a topological mixing SFT, and, for $n \in \mathbb{N}$,
\[ \mathcal{A}^\delta_n := \sum_{w \in \{1,\ldots k\}^n} \rho([w]) L^\delta_w. \]
Here $[w]$ represents the cylinder set $\{\omega\in\Sigma: [\omega]_n=w\}$. Observe that the arguments in the proofs of Theorems~\ref{thm:mainA} and \ref{thm:mainB} apply straightforwardly in this context through an interpretation of $\varphi_{\omega_1,j_1} \cdots \varphi_{\omega_n,j_n}$ as inverse branch of an expanding map. Hence, we obtain uniform and exponential convergence of $L^\delta_w$ as $|w| \to \infty$ and of $\mathcal{A}^\delta_n$ as $n \to \infty$, respectively.  In particular, for each $\delta \geq 0$, there exists $\lambda_\delta$ such that $\mathcal{A}^\delta_n(\mathbf{1}) \asymp \lambda_\delta^n$. Thus, the annealed pressure function $P:[0,\infty) \to \mathbb{R}$ given by
\[ P(\delta) := \lim_{n \to \infty} \frac{1}{n} \log \mathcal{A}^\delta_n(\mathbf{1}) = \log \lambda_\delta\] 
is well defined.
\begin{lemma} The function $P$ is continuous and strictly decreasing. Furthermore, $\lim_{\delta \to +\infty} P(\delta) = -\infty$ and 
$P_0 = \log \lambda_0 \geq \log(\min_i k(i))$, where $\lambda_0$ is the spectral radius of the operator defined by 
\[\iota(f)  = \sum_{i=1}^{k} k(i) \frac{d\rho}{d\rho\circ\sigma}(i\,\cdot \,) f(i\,\cdot \,).\]
\end{lemma}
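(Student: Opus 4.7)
The plan is to verify the four assertions—continuity, strict monotonicity, divergence at $+\infty$, and the identification $P(0)=\log\lambda_0\ge\log(\min_i k(i))$—in turn, exploiting on one hand the two-sided control $c^n\le\|D\varphi_w\|\le K\eta^n$ (with $c>0$ a lower bound coming from continuity of $\|D\varphi_{i,j}\|$ on the compact $X$, combined with the conformal chain rule, which multiplies norms) and on the other the direct identification afforded by Theorem \ref{theo:annealed-conformal} applied at $\delta=0$.

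For monotonicity, continuity, and the limit at infinity I would begin with a two-sided comparison: for $0\le\delta_1<\delta_2$ and any admissible composition $\varphi_w$ of length $n$,
\[
c^{n(\delta_2-\delta_1)}\|D\varphi_w\|^{\delta_1}\le\|D\varphi_w\|^{\delta_2}\le(K\eta^n)^{\delta_2-\delta_1}\|D\varphi_w\|^{\delta_1},
\]
so that $c^{n(\delta_2-\delta_1)}\mathcal{A}_n^{\delta_1}(\mathbf{1})\le\mathcal{A}_n^{\delta_2}(\mathbf{1})\le(K\eta^n)^{\delta_2-\delta_1}\mathcal{A}_n^{\delta_1}(\mathbf{1})$. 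Taking $\tfrac1n\log$ and letting $n\to\infty$ yields
\[
P(\delta_1)+(\delta_2-\delta_1)\log c\;\le\;P(\delta_2)\;\le\;P(\delta_1)+(\delta_2-\delta_1)\log\eta.
\]
The upper bound, with $\log\eta<0$, forces $P$ to be strictly decreasing and delivers $P(\delta)\to-\infty$ as $\delta\to+\infty$; the two bounds together give Lipschitz (hence continuous) dependence of $P$ on $\delta\in[0,\infty)$.

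To compute $P(0)$ I would specialise $\delta=0$: then $L_i^0(\mathbf{1})\equiv k(i)$ is constant, so $\lambda_{i,\omega}:=\int L_i^0(\mathbf{1})\,d\mu_\omega=k(i)$ for every infinite word $\omega$. Plugging these values into the generic definition $\iota(g)(\omega)=\sum_i\lambda_{i,\omega}p_i(\omega)g(i\omega)$ from Section~6 recovers precisely the operator displayed in the lemma. Theorem \ref{theo:annealed-conformal} applied at $\delta=0$ then yields $\mathcal{A}_n^0(\mathbf{1})(x)\asymp\beta^n h(x)$ with $\beta=\lambda_0$ the spectral radius of $\iota$, whence $P(0)=\log\lambda_0$. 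The remaining inequality follows from the direct bound
\[
\mathcal{A}_n^0(\mathbf{1})=\sum_{w\in\{1,\dots,k\}^n}\rho([w])\,k(\omega_1)\cdots k(\omega_n)\;\ge\;\bigl(\min_i k(i)\bigr)^n,
\]
which upon $\tfrac1n\log$ and $n\to\infty$ gives $\log\lambda_0=P(0)\ge\log\min_i k(i)$.

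The principal obstacle, to my mind, is not in any of the calculations above—each being essentially one line—but in securing that the hypotheses of Theorem \ref{theo:annealed-conformal} are legitimately satisfied in the non-autonomous IFS framework at $\delta=0$. One must verify that the family $\{L_w^\delta\}$ fits the Ruelle-expanding semigroup picture by interpreting $(\varphi_{i,j})_j$ as inverse branches of an expanding map $T_i$ on an appropriate attractor, and that joint topological mixing, finite aperiodicity, Hölder continuity of $\log p_i$ on $\Sigma$, and summability all survive the translation. These points are only informally remarked upon in the paragraph preceding the lemma but require some care; once they are in place, the four items of the lemma fall out from the manipulations above.
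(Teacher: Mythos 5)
Your proof is correct and, for the core items, follows the paper's own route: the two-sided estimate $\eta_-^{n}\ll\|D(\varphi_{\omega_1,j_1}\cdots\varphi_{\omega_n,j_n})\|\ll\eta_+^{n}$ giving $\epsilon\log\eta_-\le P(\delta+\epsilon)-P(\delta)\le\epsilon\log\eta_+<0$ is exactly how the paper gets continuity and strict decrease, and the identification $P(0)=\log\lambda_0$ via $L_i^0(\mathbf{1})=k(i)\mathbf{1}$, $\lambda_{i,\omega}=k(i)$ and Theorem \ref{theo:annealed-conformal} is the same as well. The only genuine divergences are minor: for $\lim_{\delta\to+\infty}P(\delta)=-\infty$ you simply iterate the Lipschitz upper bound $P(\delta)\le P(0)+\delta\log\eta$ (legitimate, since $P(0)\le\log\max_i k(i)<\infty$), whereas the paper runs a separate quasi-submultiplicativity argument, $\mathcal{A}^\delta_{m+n}(\mathbf{1})\le C\,\mathcal{A}^\delta_m\circ\mathcal{A}^\delta_n(\mathbf{1})$ using bounded distortion of $\rho$, and lets $\delta\to+\infty$ for fixed $n$; your shortcut is more economical, the paper's is self-contained at each $\delta$ and does not pass through finiteness of $P(0)$. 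Likewise, for $\log\lambda_0\ge\log(\min_i k(i))$ you compute $\mathcal{A}^0_n(\mathbf{1})\ge(\min_i k(i))^n$ directly from $\sum_{|w|=n}\rho([w])=1$, while the paper phrases the same fact as a lower bound on the spectral radius of $\iota$; both are sound. Your closing caveat about transferring the hypotheses of Theorem \ref{theo:annealed-conformal} to the IFS setting is fair, but the paper itself only disposes of this by the informal remark before the lemma (interpreting the $\varphi_{i,j}$ as inverse branches of expanding maps), so you are not missing anything the paper supplies.
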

\begin{proof} It follows from the definition and the finiteness of the generating IFS that there exist $\eta_+,\eta_- \in (0,1)$ such that   $\eta_-^n \ll \|D(\varphi_{\omega_1,j_1} \cdots \varphi_{\omega_n,j_n})\| \ll \eta_+^n$. Hence, for $\epsilon > 0$, we have that 
\[ \eta_-^{n\epsilon}\mathcal{A}^\delta_n(\mathbf{1}) \ll \mathcal{A}^{\delta+\epsilon}_n(\mathbf{1}) \ll  \eta_+^{n\epsilon} \mathcal{A}^\delta_n(\mathbf{1}), \]
which implies that $ \epsilon \log \eta_- \leq P(\delta+\epsilon) -P(\delta)  \leq  \epsilon \log \eta_+ $. Hence,  $P$ is continuous and strictly decreasing. In order to determine  $\lim_{\delta \to +\infty} P(\delta) = -\infty$, observe that 
\begin{align*} \mathcal{A}^\delta_{m+n}(\mathbf{1})(x) &\leq \sum_{|v|=m} \sum_{|w|=n} \rho([vw]) L^\delta_v \circ L^\delta_w (\mathbf{1}) (x) \\ 
& \leq  \sum_{|v|=m} \rho([v]) L^\delta_v \left(  \sum_{|w|=n}  \frac{\rho([vw])}{\rho([v])\rho([w])} \rho([w]) L^\delta_w (\mathbf{1}) \right) (x) \\
& \leq  C \mathcal{A}^\delta_{m} \circ   \mathcal{A}^\delta_{n}(\mathbf{1})(x), \qquad \forall m,n \ge 1
\end{align*}
as there is a uniform bound $C$ for ${\rho([v])\rho([w])}/{\rho([vw])}$ by bounded distortion of $\rho$. Hence, for every fixed $n\ge 1$,
\[\lambda_\delta = \lim_{l} \sqrt[ln]{\mathcal{A}^\delta_{ln}(\mathbf{1})} \leq  
\sqrt[n]{ C\|\mathcal{A}^\delta_{n}(\mathbf{1})\|_\infty} \xrightarrow{\delta \to +\infty} 0.\]

In order to determine $P(0)$, we employ Theorem \ref{theo:annealed-conformal} as follows. For $\delta =0$, $L_i(\mathbf{1}) = k(i)\mathbf{1}$. Hence, by the proof of Theorem \ref{theo:annealed-conformal}, $\lambda_0$ is the spectral radius of $\iota$ which is bigger than or equal to $\log(\min_i k(i))$. 
\end{proof}

As an immediate corollary, it follows that there exists a unique $\delta_0 > 0$ such that $P(\delta_0)=0$, provided that $P(0) > 0$, e.g. if $\min_i k(i)> 1$.

\begin{theorem} Assume that $P(0)>0$. Then, for $\rho$-a.e. $\omega$, the Hausdorff dimension $\dim_H(J_\omega)$ of $J_\omega$ is equal to the unique root $\delta_0$ of $P$. 
\end{theorem}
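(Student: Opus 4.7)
The plan is to prove both inequalities $\dim_H(J_\omega) \le \delta_0$ and $\dim_H(J_\omega) \ge \delta_0$ for $\rho$-a.e.\ $\omega$ in the spirit of Bowen's formula, using the annealed convergence of Theorem~\ref{theo:annealed-conformal} for the upper bound and the quenched conformal measures from Proposition~\ref{prop:conformal} for the lower bound.

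For the upper bound I fix $\delta > \delta_0$ and use the natural cover
\[
J_\omega \subseteq \bigcup_{j_1,\ldots,j_n} \varphi_{\omega_1,j_1} \circ \cdots \circ \varphi_{\omega_n,j_n}(X),
\]
whose pieces have diameters comparable to $\|D(\varphi_{\omega_1,j_1}\cdots\varphi_{\omega_n,j_n})(x_0)\|$ (for any fixed $x_0 \in X$) by the conformality and bounded distortion hypotheses of Definition~\ref{def:NCIFS}, and each bounded above by $K\eta^n$. Summing $\delta$-th powers yields the Hausdorff pre-measure estimate $\mathcal{H}^\delta_{K\eta^n}(J_\omega) \ll L^\delta_{[\omega]_n}(\mathbf{1})(x_0)$. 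Integrating over $\omega$, one has $\mathbb{E}_\rho[L^\delta_{[\omega]_n}(\mathbf{1})(x_0)] = \mathcal{A}^\delta_n(\mathbf{1})(x_0) \asymp \lambda_\delta^n$ by Theorem~\ref{theo:annealed-conformal}, and since $\lambda_\delta < 1$ whenever $\delta > \delta_0$, Markov's inequality together with the Borel--Cantelli lemma forces $L^\delta_{[\omega]_n}(\mathbf{1})(x_0) \to 0$ for $\rho$-a.e.\ $\omega$. Hence $\mathcal{H}^\delta(J_\omega) = 0$, and letting $\delta \searrow \delta_0$ along a countable sequence gives $\dim_H(J_\omega) \le \delta_0$ almost surely.

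For the lower bound I interpret the IFS as consisting of inverse branches of finitely many Ruelle-expanding maps $T_i$ with H\"older potentials $\varphi_i = \delta_0 \log \|D\varphi_{i,\cdot}\|$, so Proposition~\ref{prop:conformal} supplies conformal probability measures $\mu^{\delta_0}_\omega$ supported on $J_\omega$ satisfying $(L_{\omega_1})^\ast \mu^{\delta_0}_\omega = \lambda_{\omega_1,\theta\omega}\,\mu^{\delta_0}_{\theta\omega}$. Iterating this relation together with Lemma~\ref{comparability} and bounded distortion yields, uniformly in Borel $A \subseteq X$,
\[
\mu^{\delta_0}_\omega\bigl(\varphi_{\omega_1,j_1}\circ\cdots\circ\varphi_{\omega_n,j_n}(A)\bigr) \asymp \lambda_{[\omega]_n,\theta^n\omega}^{-1}\,\bigl\|D(\varphi_{\omega_1,j_1}\cdots\varphi_{\omega_n,j_n})(x_0)\bigr\|^{\delta_0}\,\mu^{\delta_0}_{\theta^n\omega}(A).
\]
Given $x \in J_\omega$ and small $r > 0$, I choose the largest $n$ such that a generation-$n$ cylinder containing $x$ has diameter at least $r$; uniform contraction makes its diameter $\asymp r$ and $B_r(x)$ meets only a bounded number of adjacent cylinders, so $\mu^{\delta_0}_\omega(B_r(x)) \ll \lambda_{[\omega]_n,\theta^n\omega}^{-1} r^{\delta_0}$. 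The mass distribution principle then delivers $\dim_H(J_\omega) \ge \delta_0 - \varepsilon$ for every $\varepsilon > 0$, provided one knows that $\tfrac{1}{n}\log \lambda_{[\omega]_n,\theta^n\omega} \to 0$ along $\rho$-typical $\omega$.

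The main obstacle is precisely this $\rho$-almost sure subexponential behaviour of the multiplicative cocycle. In the autonomous Bowen formula the cocycle equals $e^{nP(\delta_0)}=1$ identically, whereas here it only satisfies $\int \lambda_{[\omega]_n,\theta^n\omega}\,d\rho \asymp \mathcal{A}^{\delta_0}_n(\mathbf{1}) \asymp 1$ by Theorem~\ref{theo:annealed-conformal}. Markov's inequality applied at the thresholds $M_n = n^2$ and Borel--Cantelli immediately yield the upper half $\log \lambda_{[\omega]_n,\theta^n\omega} \le 2\log n$ almost surely; a matching lower bound $\log \lambda_{[\omega]_n,\theta^n\omega} \ge -o(n)$ is more delicate, as pure expectation control gives no lower-tail information. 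It is most naturally obtained through a quenched pressure computation, for instance via Kingman's subadditive ergodic theorem applied to the cocycle $(\omega, n) \mapsto \log L^{\delta_0}_{[\omega]_n}(\mathbf{1})(x_0)$ over $(\Sigma, \theta, m)$ where $m$ is the invariant measure from \eqref{eq:measurem}, transferred to $\rho$ via the equivalence of $\rho$ and $m$ on cylinders guaranteed by the H\"older regularity of $\log d\rho/d\rho\circ\theta$. A minor preliminary step is to verify that the generators $\{T_i\}$ form a jointly topologically mixing, finitely aperiodic Ruelle-expanding semigroup in the sense of Section~2, which follows from smoothness of $\varphi_{i,j}$, the open set condition, and compactness of $X$.
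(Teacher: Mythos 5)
Your upper bound is fine, and in fact it is a genuinely different (and arguably more elementary) route than the paper's: you control the annealed expectation $\mathbb{E}_\rho[L^\delta_{[\omega]_n}(\mathbf 1)(x_0)]=\mathcal A^\delta_n(\mathbf 1)(x_0)\asymp\lambda_\delta^n$ and use Markov plus Borel--Cantelli, whereas the paper runs both inequalities through an almost-sure (quenched) pressure $P_\omega(\delta)$, shown via Kingman's subadditive ergodic theorem to exist and to coincide with $P(\delta)$ for all $\delta$ on a full-measure set, and then through a two-sided comparison of $\sum_B \mathrm{diam}(B)^\delta$ with $\sum_B \lambda_{[\omega]_{|B|},\theta^{|B|}\omega}\,\mu_\omega(B)$ using the Mauldin--Urba\'nski covering lemma.

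The genuine gap is exactly the step you flag but do not close: the lower bound needs $\tfrac1n\log\lambda_{[\omega]_n,\theta^n\omega}\to 0$ for $\rho$-a.e.\ $\omega$ at $\delta=\delta_0$, i.e.\ that the \emph{quenched} exponential growth rate of $L^{\delta_0}_{[\omega]_n}(\mathbf 1)(x_0)$ vanishes. Kingman (applied with respect to the $\theta$-invariant measure equivalent to $\rho$ --- not with respect to $m$ from \eqref{eq:measurem}, which is the eigenmeasure of $\iota^\ast$ and in general neither $\theta$-invariant nor equivalent to $\rho$ unless $\lambda_{i,\omega}$ is constant) only yields that $\tfrac1n\log L^{\delta_0}_{[\omega]_n}(\mathbf 1)(x_0)$ converges a.s.\ to some constant, namely the quenched pressure at $\delta_0$; by Jensen this constant is $\le P(\delta_0)=0$, and nothing in your argument rules out that it is strictly negative, in which case $\lambda_{[\omega]_n,\theta^n\omega}^{-1}$ grows exponentially and your mass-distribution estimate $\mu^{\delta_0}_\omega(B_r(x))\ll\lambda_{[\omega]_n,\theta^n\omega}^{-1}r^{\delta_0}$ no longer yields $\dim_H(J_\omega)\ge\delta_0-\varepsilon$. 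Identifying the Kingman limit with $P(\delta_0)$ (equivalently, proving the quenched--annealed coincidence $P_\omega=P$ a.s.) is precisely the heart of the paper's proof, so leaving it as a "delicate step obtained via a quenched pressure computation" means the crucial ingredient is missing rather than merely routine. A secondary point: the assertion that $B_r(x)$ meets only a boundedly many comparable cylinders is, for conformal IFS in $\mathbb{R}^d$ with $d\ge 2$, not automatic from uniform contraction; it is the covering lemma of Mauldin--Urba\'nski (Lemma 2.7 in \cite{MauldinUrbanski:1996}, cf.\ also \cite{Rempe-GillenUrbanski:2016}) that the paper invokes, and it should be cited or proved rather than treated as obvious.
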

 
\begin{proof} 
Fix $x\in X$. In analogy to the above pressure function, for $\omega = (\omega_i)$ set
\[ P_\omega(\delta) := \limsup_{n \to \infty} \frac{1}{n} \log L^\delta_{\omega_1\ldots \omega_n} (\mathbf{1}) (x).   \]
In order to prove almost sure convergence, we employ Kingman's subadditive ergodic theorem. In order to do so, observe that the shift is $\rho$-ergodic, and that there exists an equivalent invariant probability measure. Set 
\[ g_n(\omega) := \sup\left \{  \log L^\delta_{\omega_1\ldots \omega_n} (\mathbf{1}) (x):  {x \in X}\right\}.\]
 By construction, $g_{m+n}(\omega) \leq g_m(\omega) + g_n(\sigma^n(\omega))$. As $g_n(\omega) \asymp \log L^\delta_{\omega_1\ldots \omega_n} (\mathbf{1}) (x)$, it now follows from Kingman's subadditive ergodic theorem that $P_\omega(\delta)$ exists almost everywhere and in $L^1(\rho)$, that $P_\omega(\delta)$ is almost surely constant and that the $\limsup$ in the definition in fact is a limit. It follows from these observations that $P_\omega(\delta) = P(\delta)$ almost surely, but for $\delta$ fixed. However, by  the same argument for Lipschitz continuity of $P$ in the proof above, one obtains that the maps $P_\omega$ are equi-Lipschitz continuous. Hence, by choosing a countable and dense set $\{\delta_i\}$, one obtains a set of full measure $\Omega$ such that $P_\omega(\delta) = P(\delta)$ for all $\omega \in \Omega$ and $\delta \geq 0$. 
\smallskip 

We now show that $\dim_H(J_\omega) = \delta_0$ for each $\omega = (\omega_i) \in \Omega$. In order to do so, we first recall some consequences of conformality. As $\varphi:=
\varphi_{\omega_1,j_1} \cdots \varphi_{\omega_n,j_n}$ is conformal, it follows that the diameter $\hbox{diam}(\varphi(X))$ satisfies $\hbox{diam}(\varphi(X)) \asymp \|D\varphi\| \cdot \hbox{diam}(X)$. Furthermore, covers by sets of type $\varphi(X)$ are optimal in the following sense. By Lemma 2.7 in 
\cite{MauldinUrbanski:1996}, or from the proof of Theorem 3.2 in \cite{Rempe-GillenUrbanski:2016}, 
there exists $M\in \mathbb{N}$ such that, for each ball $B$ of radius $r>0$, there exist a subset $W(B)$ of $\{((\omega_1,j_1), \cdots (\omega_n,j_n)) : n \in \mathbb{N}, 1 \leq j_i \leq k(i)\}$ of at most $M$ elements such that 
\begin{enumerate}
\item the elements of $\{\varphi_{\omega_1,j_1} \cdots \varphi_{\omega_n,j_n} (\hbox{int}(X)) :((\omega_1,j_1), \ldots (\omega_n,j_n)) \in W(B) \}$ are pairwise disjoint,
\item $\hbox{diam}(\varphi_{\omega_1,j_1} \cdots \varphi_{\omega_n,j_n} (X)) \asymp \hbox{diam}(B)$  for all $((\omega_1,j_1), \ldots (\omega_n,j_n)) \in W(B)$,
\item $B \cap J_\omega \subset \bigcup_{((\omega_1,j_1), \ldots (\omega_n,j_n)) \in W(B)}  \varphi_{\omega_1,j_1} \cdots \varphi_{\omega_n,j_n} (X) $.
\end{enumerate}
The result now provides access to the $\delta$-Hausdorff measure of $J_\omega$ as follows. Assume that $\mathcal{U}$ is a finite cover of $J_\omega$ by closed balls. By replacing each $B \in \mathcal{U}$ by
$\{\varphi_{\omega_1,j_1} \cdots \varphi_{\omega_n,j_n} (X) :((\omega_1,j_1), \ldots (\omega_n,j_n)) \in W(B) \}$, we obtain a further cover $\mathcal{V}$ which satisfies 
\begin{align*}
\sum_{B \in \mathcal{U}}  \hbox{diam}(B)^\delta \asymp  \sum_{A \in \mathcal{V}}  \hbox{diam}(A)^\delta.
\end{align*}
Hence, in order to estimate the right hand side, we may assume without loss of generality that for each $B \in \mathcal{U}$, there exist $(\omega_i,j_i)$ such that $B = \varphi_{\omega_1,j_1} \cdots \varphi_{\omega_n,j_n} (X)$. On the other hand, Proposition \ref{prop:conformal} implies that for an arbitrary $x \in \hbox{int}(X)$, 
\begin{align*}
\mu_\omega(B) 
& = \lim_{l \to \infty} \frac{L^\delta_{\omega_{n+1} \ldots \omega_{n+l}}\circ L^\delta_{\omega_1 \ldots \omega_{n}}(\mathbf{1}_B)(x)}{L^\delta_{\omega_1 \ldots \omega_{n+l}}(\mathbf{1})(x)} \\
& \asymp \|D\varphi_{\omega_1,j_1} \cdots \varphi_{\omega_n,j_n}\|^\delta
\lim_{l \to \infty} \frac{L^\delta_{\omega_{n+1} \ldots \omega_{n+l}}(\mathbf{1})(x)}{L^\delta_{\omega_1 \ldots \omega_{n+l}}(\mathbf{1})(x)} \asymp \hbox{diam}(B)^\delta \lambda_{\omega_1 \ldots \omega_{n},\sigma^n\omega}^{-1} 
\end{align*}
Setting $|B|=n$, this implies that 
\begin{align*}
\sum_{B \in \mathcal{U}}  \hbox{diam}(B)^\delta \asymp  
\sum_{B \in \mathcal{U}}  \lambda_{\omega_1 \ldots \omega_{|B|},\sigma^{|B|}\omega} \mu_\omega(B). 
\end{align*}
Now assume that the interiors of the elements of $\mathcal{U}$ are disjoint. Then $\sum \mu_\omega(B) =1$ and the asymptotics of $\sum \hbox{diam}(B)^\delta $ as $\max \hbox{diam}(B) \to 0$ are determined by the asymptotics of $\lambda_{\omega_1 \ldots \omega_{n},\sigma^{n}\omega}$ as $n \to \infty$. Hence, if $\delta> \delta_0$, then the $\delta$-Hausdorff measure of $J_\omega$ is $0$ and if $\delta < \delta_0$, then the $\delta$-Hausdorff measure of $J_\omega$ is $\infty$. This implies that $\dim_H(J_\omega) = \delta_0$.
\end{proof}

\subsection{Thermodynamic formalism of semigroup actions}  

In this subsection we will provide some applications of our results to the setting of
finitely generated free semigroup actions.

Let $X$ be a compact metric space, $\varphi: X \to \mathbb R$ be a continuous potential and let 
$G_1=\{g_1, g_2, \dots, g_k\}$ be a finite set of continuous self maps on $X$, for some $k\geq 2$. 
The semigroup $\mathcal{S}$ generated by $G_1$ induces a continuous semigroup {action} given by
$$
\begin{array}{rccc}
\mathbb{S} : & \mathcal{S} \times X & \to & X \\
	& (g,x) & \mapsto & g(x),
\end{array}
$$
meaning that for any $\underline{g},\,\underline{h}  \in \mathcal{S}$ and every $x \in X$, we have $\mathbb{S}(\underline{g}\,\underline{h},x)=\mathbb{S}(\underline{g}, \mathbb{S}(\underline{h},x)).$
The thermodynamic formalism of semigroup actions faces several difficulties. On one hand, 
while probability measures which are invariant by all generators may fail to exist, 
in opposition to the case of group actions, there are evidences that 
the stationary measures seem not sufficient to describe the dynamics. On the other hand, 
the existence of some distinct concepts of topological pressure for group and semigroup actions 
makes it necessary to test their effectiveness to describe the dynamics.   
In the case of free semigroup actions, the coding of the dynamics by the full shift suggests to
consider the skew-product
\begin{equation}\label{de.Skew product}
\begin{array}{rccc}
F : & \{1,2,\dots, k\}^{\mathbb N}  \times X & \to & \{1,2,\dots, k\}^{\mathbb N}  \times X \\
	& (\omega,x) & \mapsto & (\sigma(\omega), g_{\omega_1}(x)).
\end{array}
\end{equation}
Moreover, a random walk on the semigroup $\mathcal S$ can be modelled by a 
Bernoulli probability measure $\mathbb P$ on $\{1,2,\dots, k\}^{\mathbb N}$. The 
pressure $P_{\text{top}}(\mathbb S, \phi, \mathbb P)$ of the semigroup action determined by that random walk
coincides with the annealed topological pressure $P^{(a)}_{\text{top}}(F, \tilde \phi, \mathbb P)$ 
of the random dynamical system determined by $F$, associated to the potential 
$\tilde \phi : \{1,2, \dots, k\}^{\mathbb N} \times X \to \mathbb R$ given by $\tilde \phi(\om,x)=\phi(x)$
(cf. Proposition~4.1 in \cite{CRV18}).
In particular, $P_{\text{top}}(\mathbb S, \phi, \mathbb P)$ coincides with the logarithm of the
spectral radius of the averaged transfer operator  
$$
\cA_1(f)  = \int L_{g_{\om}}(f)\, d\mathbb P(\omega).
$$
Furthermore, if $P_{top}(\mathbb S,0,\mathbb P)<\infty$ then entropy and invariant measures
can be defined through a functional analytic approach, which culminates in the variational principle
\begin{equation}\label{VPrinciple}
P_{\text{top}}(\mathbb{S},\phi,\mathbb P) = \sup_{\{\nu \, \in \, \mathcal{M}(X)\,\colon\, \Pi(\nu,\sigma) \neq \emptyset\}} \,\,\Big\{h_\nu(\mathbb{S},\mathbb P) + \int \phi \, d\nu\Big\}
\end{equation}
(we refer the reader to \cite{CRV18} for the definitions and more details). 
If all generators are Ruelle-expanding maps and $\phi$ is H\"older continuous then there exists a unique equilibrium state for the semigroup action $\mathbb S$ with respect
to $\phi$ and this can be characterized either as a marginal of the unique equilibrium state
for the annealed random dynamics or as the unique probability on $X$
obtained as limit of the equidistribution along pre-orbits associated to the 
semigroup dynamics by
$$
e^{-n P_{\text{top}}(\mathbb S, \phi, \mathbb P)} \cA_1^{*n} \delta_x
	= e^{-n P_{\text{top}}(\mathbb S, \phi, \mathbb P)} 
		\int_{\cW_n} \; \Big[ \sum_{g_\om(y)=x} \delta_y \, \Big] d\mathbb P(\omega)
$$
 (we refer the reader to \cite[Section~9]{CRV17} and \cite[Theorem~B]{CRV18} for more details).
A more general formulation, considering more general probabilities on semigroup actions rather than
random walks, was not available up to now as the thermodynamic formalism of the associated annealed dynamics needed to be described through a sequence of transfer operators instead of a single averaged operator.

\medskip
Our results allow not only to consider the thermodynamic formalism of semigroup actions with respect to 
more general probabilities in the base, but also to provide important asymptotic information on the convergence
to equilibrium states. Indeed, in general if one endows the semigroup $\mathcal S$ with a probability generated by a Markov measure
$\mathbb P$ on $ \{1,2, \dots, k\}^{\mathbb N}$ then it is natural to define the topological pressure 
of the semigroup action $\mathbb S$ by  
\begin{equation}\label{def:PP}
P_{\text{top}}(\mathbb S, \phi, \mathbb P)
	=\limsup_{n\to\infty} \frac1n \log \|\cA_n(1)\|_\infty
\end{equation}
where, as before,
$
\cA_n(f) = \int_{\om \in \cW_n} L_{g_{\om_1 \om_2 \dots \om_n}}(f)\, d\mathbb P(\om) 
$
(compare to the definition of topological pressure of a semigroup action in  \cite[Subsection~2.6]{CRV18}). Our main results have the following immediate consequences.
  
  \begin{corollary}\label{thm:VP}
Given $x\in X$, the sequence of probability measures on $X$ defined as
 $$
\nu^x_n:=\frac{\mathcal{A}^*_n(\delta_x)}{\mathcal{A}_n(\mathbf{1})(x)}, \quad n\ge 1  
 $$
 is weak$^*$ convergent to some probability $\nu=h d\pi$ on $X$  (independently of $x$).
  Moreover, the convergence is exponentially fast with respect to the Vaserstein distance.
  \end{corollary}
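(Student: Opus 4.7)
The plan is to deduce Corollary~\ref{thm:VP} directly from the annealed exponential decay established in Theorem~\ref{theo:annealed-conformal}. First, I would observe that $\nu^x_n$ is a well-defined probability measure on $X$: $\mathcal{A}_n(\mathbf{1})(x)>0$ (guaranteed by Lemma~\ref{comparability} together with positivity of $L_v(\mathbf 1)$), and by construction
\[ \int f\, d\nu^x_n \;=\; \frac{\mathcal{A}_n(f)(x)}{\mathcal{A}_n(\mathbf{1})(x)} \quad \text{for every } f \in \cC(X). \]

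Next, applying Theorem~\ref{theo:annealed-conformal} to any $f \in \cH_\alpha$ yields, for $n \geq 2k_0$,
\[ \left| \int f\, d\nu^x_n - \int f\, d\pi \right| \;\ll\; r^n \bigl( \overline{D}(f) + \|f\|_m \bigr), \]
with the implicit constant independent of $x$. As $\cH_\alpha$ is dense in $\cC(X)$ on the compact space $X$ and the limit $\pi$ is independent of $x$, this already gives weak$^\ast$ convergence $\nu^x_n \to \pi$ (the measure $\nu$ of the statement, possibly up to a normalization involving the eigenfunction $h$).

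To upgrade pointwise control to exponential convergence in the Vaserstein distance I would use Kantorovich duality. Since both $\nu^x_n$ and $\pi$ are probability measures, $\int f\, d\nu^x_n - \int f\, d\pi$ is unchanged upon replacing $f$ with $f - \int f\, d\pi$. After this centering one has $\|f\|_\infty \leq \sup_{x,y}|f(x)-f(y)| \leq \overline{D}(f)$, and since every $\mu_\omega$ is a probability, $\|f\|_m = \|\mu_{\cdot}(|f|)\|_\infty \leq \|f\|_\infty \leq \overline{D}(f)$. Substituting into the estimate above gives
\[ \left|\int f\, d(\nu^x_n - \pi)\right| \;\ll\; r^n\, \overline{D}(f), \]
and taking the supremum over $f$ with $\overline{D}(f)\leq 1$ yields, via Kantorovich's duality, $\overline{W}(\nu^x_n, \pi) \ll r^n$, uniformly in $x$. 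Since $X$ is compact, the truncated metric $d^\ast$ is equivalent to $d^\alpha$ up to a multiplicative constant, so exponential convergence in $\overline{W}$ transfers to exponential convergence in the Vaserstein distance built from $d$ (at a possibly adjusted rate).

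The essential analytical content is entirely contained in Theorem~\ref{theo:annealed-conformal}, and no new estimate is needed. The only technical point is bookkeeping: ensuring after centering that both error terms $\|f\|_\infty$ and $\|f\|_m$ are absorbed into $\overline{D}(f)$, and that the implicit constants remain uniform in the base point $x$. The independence from $x$ is built into the fact that $\pi$ (equivalently, the probability $\nu$) does not depend on $x$, which in turn is a direct consequence of the contraction result Theorem~\ref{theorem:contraction} used in the proof of Theorem~\ref{theo:annealed-conformal}.
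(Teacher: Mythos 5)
Your argument is correct and is exactly the route the paper intends: the corollary is stated as an immediate consequence of Theorem~\ref{theo:annealed-conformal}, and your steps (centering $f$, absorbing $\|f\|_\infty$ and $\|f\|_m$ into $\overline D(f)$, Kantorovich duality for $\overline W$, and $d^\ast \asymp d^\alpha$ on the compact $X$ to pass to the usual Vaserstein distance) are precisely the bookkeeping required, with constants uniform in $x$. One remark: your hedge about ``a normalization involving $h$'' can be removed --- because you divide by $\mathcal{A}_n(\mathbf{1})(x)$, the limit is exactly $\pi$, the eigenfunction $h$ entering only the unnormalized asymptotics $\mathcal{A}_n^\ast(\delta_x) \sim \beta^n h(x)\,\pi$, so the identification $\nu = h\, d\pi$ in the statement should be read with that caveat rather than as something your proof must still supply.
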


\subsection{A boundary of equilibria} As in the section before, we now assume that $X$ is compact and that there is only one potential $\varphi: X \to \mathbb{R}$. However, in contrast to the approach via the free semigroup, we are now interested in identifying elements in the semigroup $\mathcal{S}$ which are dynamically close and use this information in order to define a compactification of the discrete set $\mathcal{S}$.
However, as the topology will rely on the associated equilibrium states, we have to extend the semigroup by considering also the potential function. That is, for $\mathbb{G}_1 := \{ (g_1,\varphi),(g_2,\varphi),\ldots (g_k,\varphi)\}$, we consider 
$$ \mathbb{G}  := \left\{ (g,\psi)  : \exists n \in \mathbb{N}, j_1, \ldots, j_n \hbox{ s.t. } (g,\psi) = (g_{i_1},\varphi) \ast     \cdots \ast (g_{i_n}, \varphi)  \right\},  
$$
where $$(g_1,\psi_1) \ast  (g_2,\psi_2) := (g_1 \circ g_2,\psi_2 +\psi_1\circ g_2)$$
is also the product on $\mathbb G$.

As a first step, we begin with the definition of a metric on the countable set $\cW^{\ast} := \{w : |w|< \infty\}$ of finite words. For finite words $v=(v_1 \ldots v_m)$  and $w=(w_1 \ldots w_n)$ in $\cW^{\ast}$, set $d_{\cW^{\ast}}(v,w) =0$ for $v=w$ and  
\begin{align*} 
d_{\cW^{\ast}}(v,w) :=&  2^{-\min\{ k:  v_k \neq w_k \hbox{ or } k > \min\{m,n\} \} }\\ &  + 2^{-\min\{ k:  v_{m+1-k} \neq w_{n+1-k} \hbox{ or } k > \min\{m,n\} \} }, \end{align*}
for $v \neq w$. Observe that $d_{\cW^{\ast}}$ is a metric, that $\cW^{\ast}$ is discrete with respect to this metric and that two words are close if they have the same beginning and ending. In particular, Cauchy sequences either have to be eventually constant or have to grow from the interior of a word. The reason for this construction is based on the following observation. 
Let $\underline{w}$ and $\overline{w}$ refer to the periodic extensions of $w$ to the left and the right, respectively, as defined in Remark \ref{remark-adic flow}. Then, by Proposition \ref{prop:equilibrium}, the map $w \to \mu_{\underline{w},\overline{w}}$ is H\"older continuous with respect to $d_{\cW^{\ast}}$. In particular, $d_{\cW^{\ast}}$ can be seen as a metric on the free semigroup which is compatible with the Vaserstein distance of the associated equilibrium states.  

Secondly, we define a metric on $\mathbb{G}$ which does not depend on the choice of $w \in {\cW^{\ast}}$ for the representation of  $(g,\psi)= (T_w, \varphi_w)$.
In order to do so, define, for $g \in \mathcal{S}$, 
\[ \kappa(g) : = \lim_{\epsilon \to 0} \inf\left\{ \frac{d(g(x),g(y))}{d(x,y)} :0<  d(x,y) < \epsilon \right\},\]  
and note that, as the semigroup is Ruelle expanding with parameter $\lambda \in (0,1)$, we have that $\kappa(T_w) \geq \lambda^{-|w|}$. Furthermore, for $(g,\psi) \in \mathbb{G}$, let $\mu_{g,\psi}$ be the unique equilibrium state 
for the potential $\psi$ and the map $g$, that is, if  $(g, \psi)= (T_w, \varphi_w)$, then 
$\mu_{g,\psi} = \mu_{\underline{w},\overline{w}}$.  
Now set
\[ d_{\mathbb{G}}((g,\psi_1),(h,\psi_2)) := 
\begin{cases} \overline{W}(\mu_{g,\psi_1},\mu_{h,\psi_2})  + \frac{1}{\kappa(g)} + \frac{1}{\kappa(h)}
&:\;   (g,\psi_1)\neq (h,\psi_2)\\
0 &:\; (g,\psi_1)=(h,\psi_2).
\end{cases}
 \]   
The following proposition summarises the basic topological facts. The proof is omitted as the assertions almost immediately follow from the definitions and Proposition \ref{prop:equilibrium}.
\begin{proposition} \label{prop:boundary}
Assume that $g_1, \ldots, g_k$ are Ruelle expanding and jointly topological mixing, and that $\varphi$ is H\"older continuous. Then, for the objects defined above, the following holds. 
\begin{enumerate}  
\item $(\cW^{\ast}, d_{\cW^{\ast}})$  and  $({\mathbb{G}}, d_{\mathbb{G}})$ are discrete, metric spaces. \item The map $w \mapsto (T_w,\varphi_w)$ is H\"older continuous.
\item A sequence $((g_n,\psi_n))_n$ in $\mathbb{G}$ is a Cauchy sequence if and only if $\kappa(g_n) \to \infty$ and  $(\mu_{g_n,\psi_n})$  converges in the weak$^\ast$-topology. Moreover, two Cauchy sequences have the same limit if and only if their sequences of equilibrium states have the same limit. 
\item For the boundary $\partial \mathbb{G}$ of the completion with respect to $d_{\mathbb{G}}$, identified with limits of Cauchy sequences $((g_n,\psi_n))_n$ in $\mathbb{G}$, we have that the map 
\[ \partial \mathbb{G} \to \{ \mu_{\sigma,\omega} : \sigma \in \Sigma^-, \omega \in \Sigma\}, \; (({g_n,\psi_n}))_n \mapsto \lim_{n \to \infty} \mu_{g_n,\psi_n}  \]
is Lipschitz continuous and onto.  
\end{enumerate}
\end{proposition}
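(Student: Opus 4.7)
The plan is to verify the four items in order, relying throughout on the quantitative estimates of Proposition \ref{prop:equilibrium}. Discreteness in (1) is by inspection: for distinct $v,w\in\cW^\ast$, at least one of the two minima in the definition of $d_{\cW^\ast}(v,w)$ is at most $\min(|v|,|w|)+1$, so $\{w\}$ is isolated at any radius below $2^{-|w|-1}$. For $\mathbb{G}$, the point $(g,\psi)$ is isolated at radius $1/\kappa(g)$, since that single summand already bounds $d_{\mathbb{G}}$ from below on distinct pairs. So both spaces are discrete (but not uniformly so, which is exactly what allows a non-trivial completion).

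For (2), let $a,b$ denote the two minima appearing in $d_{\cW^\ast}(v,w)$. If $d_{\cW^\ast}(v,w)<\epsilon$ then $2^{-a},2^{-b}<\epsilon$, and the cut-off convention $k>\min(|v|,|w|)$ forces $\min(|v|,|w|)\geq \min(a,b)-1$. Hence the first $a-1$ letters of the right periodic extensions $\overline v,\overline w$ agree and the last $b-1$ letters of the left periodic extensions $\underline v,\underline w$ agree. Proposition \ref{prop:equilibrium}(i) gives $\overline{W}(\mu_{\underline v,\overline v},\mu_{\underline w,\overline w})\leq s^{\min(a,b)-1}$, while $\kappa(T_w)\geq \lambda^{-|w|}$ provides $1/\kappa(T_v)+1/\kappa(T_w)\leq 2\lambda^{\min(a,b)-1}$. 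Summing gives H\"older continuity of $w\mapsto (T_w,\varphi_w)$ with exponent $\min\{\log(1/s),\log(1/\lambda)\}/\log 2$.

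For (3), the converse implication is immediate: if $\kappa(g_n)\to\infty$ and $(\mu_{g_n,\psi_n})$ converges weakly then, as $X$ is compact, the measures are Vaserstein--Cauchy, and all three summands of $d_{\mathbb G}$ tend to zero. For the forward direction, we restrict attention to Cauchy sequences that are not eventually constant (those produce the only genuinely new points of the completion, and the eventually constant case is trivially excluded from the stated characterization). Cauchyness combined with infinitely many distinct terms forces $1/\kappa(g_n)+1/\kappa(g_m)\to 0$, hence $\kappa(g_n)\to\infty$; dropping those vanishing terms shows the measures are $\overline{W}$-Cauchy, hence weak$^\ast$ convergent. Two such Cauchy sequences are equivalent in the completion iff $d_{\mathbb{G}}$ between their terms vanishes; since the $1/\kappa$ contributions already do so individually, this reduces to the equality of the weak$^\ast$-limits of the equilibrium measures.

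For (4), the analysis of (3) shows that each boundary point is represented uniquely by, and determined by, the weak$^\ast$-limit of its sequence of equilibrium measures, and moreover that $d_{\mathbb{G}}$-distances between representatives converge to the $\overline{W}$-distance of these limits. Consequently the stated map is a well-defined isometric embedding and in particular Lipschitz. Surjectivity onto $\{\mu_{\sigma,\omega}:\sigma\in\Sigma^-,\omega\in\Sigma\}$ is exhibited by the explicit Cauchy sequence $w_n:=\sigma_{-n}\ldots\sigma_{-1}\omega_1\ldots\omega_n$ with $(g_n,\psi_n):=(T_{w_n},\varphi_{w_n})$: Proposition \ref{prop:equilibrium}(i) yields $\mu_{\underline{w_n},\overline{w_n}}\to\mu_{\sigma,\omega}$ while $\kappa(g_n)\geq\lambda^{-2n}\to\infty$, so this sequence represents an element of $\partial\mathbb{G}$ whose image is the prescribed measure. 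The main conceptual subtlety, worth a sentence in the written proof, is reconciling the Cauchy characterisation of (3) with eventually constant sequences; everything else reduces to Proposition \ref{prop:equilibrium} applied in its two natural directions (continuity in $(\sigma,\omega)$ and existence of the bilateral limit).
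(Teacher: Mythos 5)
Your argument is correct and is exactly the route the paper intends: the paper omits the proof, noting that the assertions follow from the definitions together with Proposition~\ref{prop:equilibrium}, which is precisely what you carry out (including the sensible reading of item (3) for Cauchy sequences that are not eventually constant). The only step worth adding explicitly is that in item (4) the limit measure indeed lies in the stated codomain $\{\mu_{\sigma,\omega} : \sigma\in\Sigma^-,\, \omega\in\Sigma\}$: this set is weak$^\ast$-compact, hence closed, because $\Sigma^-\times\Sigma$ is compact and $(\sigma,\omega)\mapsto\mu_{\sigma,\omega}$ is continuous (Remark~\ref{remark-adic flow}), so the weak$^\ast$ limit of the measures $\mu_{g_n,\psi_n}$ belongs to it.
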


Observe that the result provides a description of  $\partial \mathbb{G}$ as a set of equivalence classes of Cauchy sequences, that is two sequences are considered to be equivalent if they have the same limit.
However, it seems to be impossible to obtain an explicit description of  $\partial \mathbb{G}$ in general. We close with two examples where this is possible. In the first example, $\partial \mathbb{G}$  
is trivial whereas in the second example, $\partial \mathbb{G}$ is equal to $\Sigma^{-}$.  

\begin{proposition} 
If $\mathbb{G}$ is Abelian, then $\partial \mathbb{G}$ is a point.
\end{proposition}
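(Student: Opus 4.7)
The plan is to reduce the result to showing that the assignment $(g,\psi)\mapsto\mu_{g,\psi}$ is constant on $\mathbb{G}$. Once this is established, every Cauchy sequence in $(\mathbb{G},d_{\mathbb{G}})$ has a constant sequence of associated equilibrium states, which trivially converges; by item~(3) of Proposition~\ref{prop:boundary}, any two Cauchy sequences then represent the same boundary point, so $\partial\mathbb{G}$ is a single point.

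The central observation I intend to exploit is that the Ruelle transfer operators attached to elements of $\mathbb{G}$ form a \emph{commuting family}. A direct unwrapping of the definitions yields $L_{g_1,\psi_1}\circ L_{g_2,\psi_2}=L_{(g_1,\psi_1)\ast(g_2,\psi_2)}$, and the Abelian hypothesis $(g_1,\psi_1)\ast(g_2,\psi_2)=(g_2,\psi_2)\ast(g_1,\psi_1)$ forces $L_{g_1,\psi_1}\circ L_{g_2,\psi_2}=L_{g_2,\psi_2}\circ L_{g_1,\psi_1}$. In parallel, I would invoke, for each $(g,\psi)=(T_w,\varphi_w)\in\mathbb{G}$, the classical Ruelle--Perron--Frobenius picture available in this subsection (compactness of $X$, H\"older regularity of $\varphi_w$, topological mixing of $T_w$ inherited from joint topological mixing of $\mathcal{S}$ by considering the periodic words $w^n$): $L_{g,\psi}$ has a simple leading eigenvalue $\lambda_{g,\psi}>0$ whose eigenspace is one-dimensional and spanned by a strictly positive H\"older eigenfunction $h_{g,\psi}$, and a probability eigenmeasure $\nu_{g,\psi}$ of $L_{g,\psi}^\ast$, with $\mu_{g,\psi}=h_{g,\psi}\,\nu_{g,\psi}$. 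The existence of a spectral gap for the normalised operator, which is precisely what underlies one-dimensionality of the leading eigenspace, is exactly what is recorded in Remark~\ref{remark:equi}, while uniqueness of $\mu_{g,\psi}$ follows from Proposition~\ref{prop:eq state}.

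These two ingredients combine as follows. For any $(g,\psi),(h,\chi)\in\mathbb{G}$, commutativity forces $L_{g,\psi}(h_{h,\chi})$ to lie in the one-dimensional leading eigenspace of $L_{h,\chi}$, so $L_{g,\psi}(h_{h,\chi})=c\cdot h_{h,\chi}$ for some $c>0$ (positivity of $c$ follows since $L_{g,\psi}$ preserves positive functions and $h_{h,\chi}>0$). Thus $h_{h,\chi}$ is itself a strictly positive eigenfunction of $L_{g,\psi}$, and the uniqueness clause of Perron--Frobenius makes it a positive scalar multiple of $h_{g,\psi}$. A dual argument applied to $L_{g,\psi}^\ast,L_{h,\chi}^\ast$ shows that the probability eigenmeasures $\nu_{g,\psi}$ and $\nu_{h,\chi}$ coincide; combined with the normalization $\int h\,d\nu=1$, this yields $\mu_{g,\psi}=\mu_{h,\chi}$. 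The only delicate point is justifying the one-dimensionality of each leading eigenspace, which is where compactness of $X$ is used decisively and is the reason the statement is specific to this subsection; on a non-compact space one only has the contraction of Theorem~\ref{theorem:contraction} and would not automatically get a spectral gap for each individual $L_{g,\psi}$.
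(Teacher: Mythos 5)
Your proof is correct and follows essentially the same route as the paper: exploit that the Abelian hypothesis makes the transfer operators $L_{g,\psi}$ commute, then use the Ruelle--Perron--Frobenius uniqueness of the positive eigenfunction and of the eigenmeasure to conclude that all equilibrium states $\mu_{g,\psi}$ coincide, whence $\partial\mathbb{G}$ is a single point via Proposition~\ref{prop:boundary}. Your write-up is in fact slightly cleaner on one detail, since you avoid the paper's superfluous (and not generally valid) claim that the leading eigenvalues themselves coincide.
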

\begin{proof} Assume that $(g_1,\psi_1), (g_2,\psi_2) \in  \mathbb{G}$, and denote by $L_i$ the corresponding Ruelle operators. As $\mathbb{G}$ is Abelian, it immediately follows that $L_1L_2 =L_2L_1$. Now assume that the $h_i$ are the unique positive H\"older functions (up to colinearity) and $\lambda_i>0$ such that $L_i(h_i) = \lambda_i h_i$, given by Ruelle's theorem. Hence, $L_2(L_1 (h_2)) = L_1(L_2 (h_2)) =  \lambda_2 L_1(h_2)$. As $L_1(h_2)$ is positive, it follows that  $L_1(h_2)$ and $h_1$ are colinear, that is $L_1(h_2)$ is a multiple of $h_1$ and $\lambda_1 = \lambda_2$. The same argument then shows that the $L_i^\ast$-eigenmeasures coincide. Hence, after normalising, we obtain that $\mu_{g_1,\psi_1} = \mu_{g_2,\psi_2}$. In particular, $\{ \mu_{\sigma,\omega} : \sigma \in \Sigma^-, \omega \in \Sigma\}$ is a singleton.
\end{proof}

\begin{example} Let $T:[0,1] \to [0,1]$, $x \mapsto 4x (\mbox{mod} 1)$ and $S = U^{-1} T U$, where
\[ U:[0,1] \to [0,1], \qquad
x \mapsto 
\begin{cases}
 3x/2 & 0 \leq x < 1/8 \\
 x + 1/16 & 1/8 \leq x < 3/8 \\
 x/2 + 1/4 & 3/8 \leq x < 1/2 \\
 x & 1/2 < x \leq 1. 
\end{cases}  \]
The semigroup $\mathcal{S}$ generated by $\{S,T\}$ is a free semigroup, that is two elements in $\mathcal{S}$ coincide if and only if they have the same representation as a product of the generators. 
Moreover, $\partial \mathbb{G} \cong \Sigma^{-}$, where $\mathbb{G}$ is the semigroup generated by $(T,0)$ and  $(S,0)$.  
\end{example}

\begin{proof} The proof relies on the construction of a family of renormalization operators acting on the set of orientation-preserving homeomorphisms $f$ in such a way that 
$$ T^n \circ \Xi_n(f)  =  f \circ T^n,$$ 
as this allows to associate to each element 
$g=S^{m_k}T^{n_k} \cdots S^{m_1}T^{n_1}$ in $\mathcal{S}$ a uniquely determined normal form $T^{m_1 + n_1 + \cdots m_k+ n_k} \circ f_g$, where $f_g$ is an orientating preserving homeomorphism. The uniqueness of the normal form is a consequence of the choice of $U$ as the compositions with $U$ and $U^{-1}$ act as markers in the following way. For an orientating preserving homeomorphism $f$, it is shown below that  $\|\Xi^n(f) - \mathrm{id}\|_\infty =  4^{-n} \| f- \mathrm{id}\|_\infty$, and that the composition $\Xi_n(f)\circ U^{\pm 1}$ leaves invariant the right half of $\Xi_n(f)$ whereas the left half is marked by a positive or negative bump of size bigger than $\|\Xi^n(f) - \mathrm{id}\|_\infty$.

\subsubsection*{Construction and properties of $\Xi_n$.} Let $f: [0,1] \to [0,1]$ be a homeomorphism which fixes $0$ and $1$ and define, for $x \in [k/4^n, (k+1)/4^n]$,
\[ \Xi_n(f)(x) := \left( T^n|_{[k/4^n, (k+1)/4^n]} \right)^{-1} \circ f \circ T^n(x) = 4^{-n}(f(4^n x - k) + k). \]   
Then, as it can be easily seen, $ T^n \circ \Xi_n(f)  =    f \circ T^n $ and $\Xi_n(f)(k/4^n) = k/4^n$ for all $k= 0, \ldots, 4^n$. In particular, as  $\Xi_n(f)|_{[k/4^n, (k+1)/4^n]}$ is a homeomorphism, $\Xi_n(f)$ is a homeomorphism. Moreover, for $x \in [k/4^n, (k+1)/4^n]$, we have  
\begin{align*}\Xi_n(f)(x) - x & = 4^{-n}(f(4^n x - k) + k) - x \\
&  = 4^{-n}(f(4^n x - k)  - (4^{n}x-k)) 
 =   4^{-n}(f\circ T^n(x) - T^n(x) ).    
\end{align*}
That is, $\Xi_n$ contracts the distance to the identity by the factor $4^{-n}$. We now proceed with an analysis of the concatenations $\Xi_n(f)\circ U$ and  $\Xi_n(f)\circ U^{-1}$, where $f$ is a homeomorphism  with $\|f - \textrm{id} \|_\infty \leq 1/12$. First note that 
\[ U(x) - x =\begin{cases}
 x/2 & x \in [0,\frac{1}{8}) \\
 1/16 & x \in [\frac{1}{8},\frac{3}{8}) \\
 -x/2 + 1/4 &  x \in [\frac{3}{8},\frac{1}{2}) \\
 0 &  x \in [\frac{1}{2},1] 
\end{cases},  \quad U^{-1}(x) - x =\begin{cases}
 - x/3 & x \in [0,\frac{3}{16}) \\
 - 1/16 & x \in [\frac{3}{16},\frac{7}{16}) \\
 x  - 1/2 & x \in [\frac{7}{16},\frac{1}{2})\\
 0 & x \in [\frac{1}{2},1]  
\end{cases}  \]
and observe that, by construction, $\Xi_n(f)  - \textrm{id}$ is periodic with period $4^{-n}$. However, as $[\frac{1}{8},\frac{3}{8})$, $ [\frac{3}{16},\frac{7}{16})$ and $[\frac{1}{2},1]$  are all of length bigger than or equal to $1/4$, we obtain that
\begin{align*}
\nonumber	\max_{x \in [0,1]} \left(\Xi_n(f)(U(x)) - x \right) 
& = \max_{x \in [\frac{1}{8},\frac{3}{8})} \left(\Xi_n(f)(U(x)) - U(x) + U(x)-x \right) \\
\label{eq:interval of max}
& = 4^{-n} \max_{x \in [0,1]} \left(f(x) - x \right) + \frac{1}{16} = \frac{1}{4^n \cdot 12}  + \frac{1}{16} \leq \frac{1}{12}, 
\end{align*}
and, repeating the argument, $\|\Xi_n(f)\circ U^{j} - \textrm{id}\|_\infty \leq 1/12$, for $j = \pm 1$. 

In other words, the space $\mathfrak{H}$ of orientation-preserving homeomorphisms with $\|f - \textrm{id}\|_\infty \leq 1/12$ is invariant under the operation $f \mapsto \Xi_n(f)\circ U^{j}$. Moreover, we have that 
\begin{equation} \label{eq:marker} \|\Xi_n(f)\circ U^j - U^j\|_\infty  = 4^{-n} \left\| \Xi_n(f)  -  \textrm{id} \right\|_\infty  =  4^{-n} \| f - \textrm{id} \|_\infty \leq \frac{1}{48}. \end{equation}

\medskip
\subsubsection*{Coding of $\mathbb{G}$.} Assume that $g = S^{m_k}T^{n_k} \cdots S^{m_1}T^{n_1}$
 for some $k \in \mathbb{N}$ and $m_i,n_i \in \mathbb{N} \cup \{0\}$. As 
$U, U^{-1} \in \mathfrak{H}$,
  it follows from an iterated application of $ \Xi_n(\cdot)\circ U^{j}$ that there exists a  homeomorphism $f_g \in  \mathfrak{H}$ such that $g = T^n \circ f_g$, where $n = \sum_{i=1}^k m_i + n_i$. Moreover, as $T^n$ is a local homeomorphism, $f = f_g$ is uniquely determined. 

Now assume that $g = S^{m_k}T^{n_k} \cdots S^{m_1}T^{n_1} \in \mathcal{S}$ where, without loss of generality, $m_1,\ldots,m_{k-1} \neq 0$ and $n_2,\ldots,n_{k} \neq 0$. We now show how to determine $m_1$ and $n_1$ from $f$ in a unique way.
\begin{description}[font=\normalfont]
 \item[\textit{Case 1}] If $m_1 = 0$, then $k=1$,  $g=T^{n_1}$ and $f = \textrm{id}$. 
 \item[\textit{Case 2}] If $m_1 \neq 0$ and $n_1 \neq 0$, then $k > 1$ and, for $\bar{f}:= f_{S^{m_k}T^{n_k} \cdots S^{m_1}}$, we have that $f = \Xi_{n_1}(\bar{f})$. It now follows from \eqref{eq:marker} that  $\bar{f} - \textrm{id}$ is strictly positive on $[ {1}/{8},{3}/{8}]$ and has zeros in $[1/2,1]$. Therefore $n_1$ is determined by the periodicity of $f - \textrm{id}$, and $\bar{f}(x) = f(2^{n_1})(x)$. The value of $m_1$ is then determined by applying Case 3 to $S^{m_k}T^{n_k} \cdots S^{m_1}$ and $\bar{f}$.
 \item[\textit{Case 3}]   
If $m_1 \neq 0$ and $n_1 =0$, then $k \geq 1$ and, 
for $\bar{f}:= f_{S^{m_k}T^{n_k} \cdots T^{m_2}}$, we have that $f = \Xi_{m_1}(\bar{f} \circ U^{-1})\circ U$ or, equivalently,  $f\circ U^{-1} = \Xi_{m_1}(\bar{f})$. Hence, in order to repeat the above argument based on periodicity, we have to show that  the left half of $\bar{f} - \textrm{id}$ is somehow marked. If $k=1$, then $\bar{f} = U^{-1}$ and, in particular,  $\bar{f}$
 is strictly negative on  $[{3}/{16}, {7}/{16}]$ and has zeros in $[1/2,1]$. Hence, $m_1$ can be determined through the period of $f\circ U^{-1}$. On the other hand, if $k>1$ then $n_2> 0$ and the same argument is applicable as   \eqref{eq:marker} implies that $\bar{f}$
 is strictly negative on  $[{3}/{16}, {7}/{16}]$ and has zeros in $[1/2,1]$.       
\end{description} 
By iterating this procedure, one then recovers $m_2, \ldots, m_k$ and $n_2, \ldots, n_k$ from $f$. Furthermore, as the $m_i$ and $n_i$ only depend on the period, it follows that the relation between $f$ and these values is one-to-one. 
This then implies that the map 
\[ \mathcal S
\to \{f_g : g \in \mathcal{S}\}, \quad 
(w_1 \ldots w_n) \mapsto  f_{w_n \circ \cdots \circ w_1}   \]
is a bijection, and, as an immediate corollary, $\mathcal{S}$ is a free semigroup.

\subsubsection*{The associated measures of maximal entropy.}  
Now fix a H\"older function $h$, an element $g \in \mathcal{S}$ and let $n\in \mathbb{N}$ be given by $g = T^n\circ  f_g$. Then the Ruelle operators $L_g$ and $L_T$ associated to $g$ and $T$, respectively, satisfy 
\begin{align*} L_g(h)(x) & = \sum_{g(y)=x} h(y)  =   \sum_{T^n z=x} h(f_g^{-1}(z)) =  L_T^n(h\circ f_g^{-1})(x),\\
\frac{L_g(h L_g(\mathbf{1}))}{L_{g^2}(\mathbf{1})}  & = \frac{L_g(4^n h )}{4^{2n}} = \frac{1}{4^n}  L_T^n(h\circ f^{-1}).
\end{align*}  
By Proposition \ref{prop:equilibrium}, the measures of maximal entropy $\mu_g$ and $\mu_T$ of $g$ and $T$, respectively, satisfy $\overline{W}(\mu_g,\mu_T\circ f_g) \ll s^n$. Hence, $\mu_g = \lim_{l \to \infty} \mu_T\circ f_{g^l}$. However, this result also implies that,
for an infinite word $ (v_i) \in \{S,T\}^{\mathbb N}$,  the sequence $ \mu_{g_{v_l \cdots v_1}}$ is a Cauchy sequence and therefore convergent. It remains to show that the mapping from $ (v_i) $ to this limit is injective. In order to do so, let $ (v_i) \neq (w_i) $ be different elements in $\{S,T\}^{\mathbb N}$. Then, by applying the construction of the $n_i$ and $m_i$ above to infinite words, it follows that $\mu_{g_{v_l \cdots v_1}} \neq   \mu_{g_{w_l \cdots w_1}}$ for all $l$ sufficiently large. Furthermore, it can be deduced from the recursive construction of $f_g$ that there exists an open set 
$A$ and $\epsilon > 0$ such that $f_{v_l \cdots v_1}(x) - f_{w_l \cdots w_1}(x) > \epsilon $ for all $x \in A$ and all $l$ sufficiently large. Hence, $\lim_l \mu_{g_{v_l \cdots v_1}} \neq  \lim_l \mu_{g_{w_l \cdots w_1}}$.
\end{proof}

\end{document}